\DeclareMathAlphabet{\mathsf}{OT1}{\sfdefault}{m}{n}
\newcommand{\nocontentsline}[3]{}
\newcommand{\tocless}[2]{\bgroup\let\addcontentsline=\nocontentsline#1{#2}\egroup}
\def\dual#1{\expandafter\dual@aux#1\@nil}
\def\dual@aux#1/#2\@nil{\begin{tabular}{@{}c@{}}#1\\#2\end{tabular}}
\DeclareMathAlphabet{\amathbb}{U}{bbold}{m}{n}
\tikzstyle{rectan} = [rectangle, rounded corners, 
\newtheoremstyle{teoremas}
{12pt}
{13pt}
{\itshape}
{}
{\bfseries}
{}
{.5em}
{}
\theoremstyle{teoremas}
\newtheorem{theorem}{Theorem}[section]
\newtheorem{corollary}[theorem]{Corollary}
\newtheorem{lemma}[theorem]{Lemma}
\newtheorem{proposition}[theorem]{Proposition}
\newtheoremstyle{definition}
{11pt}
{11pt}
{}
{}
{\bfseries}
{}
{.5em}
{}
\theoremstyle{definition}
\newtheorem{conjecture}[theorem]{Conjecture}
\newtheorem{question}[theorem]{Question}
\newtheorem{example}[theorem]{Example}
\newtheorem{remark}[theorem]{Remark}
\crefname{theorem}{theorem}{theorems}
\Crefname{theorem}{Theorem}{Theorems}
\crefname{lemma}{lemma}{lemmas}
\Crefname{lemma}{Lemma}{Lemmas}
\crefname{proposition}{proposition}{propositions}
\Crefname{proposition}{Proposition}{Propositions}
\DeclareMathOperator{\vol}{vol}
\newcommand{\M}{\mathsf{M}}
\renewcommand{\Re}{\operatorname{Re}}
\newcommand{\excise}[1]{}
\title[Examples in Ehrhart theory]{Examples and counterexamples in Ehrhart theory}
\author[L.~Ferroni]{Luis Ferroni}
\author[A.~Higashitani]{Akihiro Higashitani}
\address{(L. Ferroni)
  Department of Mathematics, KTH Royal Institute of Technology, Stockholm, Sweden
}
\email{ferroni@kth.se}
\address{
(A. Higashitani) 
Department of Pure and Applied Mathematics, Graduate School of Information Science and Technology, Osaka University, Osaka, Japan
}
\email{higashitani@ist.osaka-u.ac.jp}
\thanks{The first author is supported by the Swedish
Research Council grant 2018-03968. 
The second author is partially supported by KAKENHI 20K03513 and 21KK0043.}
\subjclass[2020]{52B20, 05A20, 14M25}
\begin{document}

\begin{abstract}
    This article provides a comprehensive exposition about inequalities that the coefficients of Ehrhart polynomials and $h^*$-polynomials satisfy under various assumptions. We pay particular attention to the properties of Ehrhart positivity as well as unimodality, log-concavity and real-rootedness for $h^*$-polynomials.

    \noindent We survey inequalities that arise when the polytope has different normality properties. We include statements previously unknown in the Ehrhart theory setting, as well as some original contributions to this topic. We address numerous variations of the conjecture asserting that IDP polytopes have a unimodal $h^*$-polynomial, and construct concrete examples that show that these variations of the conjecture are false. Emphasis is put on polytopes arising within algebraic combinatorics.
    
    \noindent Furthermore, we describe and construct polytopes having pathological properties on their Ehrhart coefficients and roots, and we indicate for the first time a connection between the notions of Ehrhart positivity and $h^*$-real-rootedness. We investigate the log-concavity of the sequence of evaluations of an Ehrhart polynomial at the non-negative integers. We conjecture that IDP polytopes have a log-concave Ehrhart series. Many additional problems and challenges are proposed.
\end{abstract}

\maketitle

\section{Introduction}

Throughout this paper we will focus on lattice polytopes. We will often denote the dimension of a lattice polytope $P\subseteq \mathbb{R}^n$ by $d$. Its \textbf{Ehrhart polynomial}, written $E_P(x)$, is defined for a positive integer $m$ by the map:
    \[ m \mapsto \#(mP \cap \mathbb{Z}^n ).\]
The polynomiality of this function was proved in seminal work of Ehrhart \cite{ehrhart}. For a thorough background on polytopes we refer to \cite{ziegler}, whereas for Ehrhart polynomials we refer to \cite{beck-robins,barvinok}. 

The polynomial $E_P(x)$ encodes fundamental information of the polytope $P$, and can be seen as a vast discrete generalization of the notion of \emph{volume}. It is known that $\deg E_P(x) = d$ and, moreover, writing
    \[ E_P(x) = a_d\,x^d + a_{d-1}\, x^{d-1} + \cdots + a_1 x + a_0,\]
one has $a_d = \vol(P)$, $a_{d-1} = \tfrac{1}{2}\vol(\partial P)$ and $a_0=1$, where $\vol(P)$ denotes the (relative) volume of $P$ and $\vol(\partial P)$ stands for the sum of the (relative) volumes of the facets of $P$ normalized by the facet-defining hyperplane; 
the remaining coefficients are much more complicated to describe \cite{mcmullen}.

The \textbf{$h^*$-polynomial} of the polytope $P$ is defined as the unique polynomial $h^*_P(x)=h_0+h_1x+\cdots+h_dx^d$, having degree at most $d$, and such that:
    \[ \sum_{m=0}^{\infty} E_P(m)\, x^m = \frac{h^*_P(x)}{(1-x)^{d+1}}.\]
The left-hand side is usually referred to as the \textbf{Ehrhart series} of $P$, whereas the vector of coefficients $(h_0,\ldots,h_d)$ is referred to as the \textbf{$h^*$-vector} of $P$. A classical result of Stanley \cite{stanley-hstar} shows that the coefficients of $h^*_P(x)$ are always non-negative integers. The Ehrhart polynomial and the $h^*$-polynomial are related to one another through a change of basis:
    \[ E_P(x) = \sum_{j=0}^d h_j \binom{x+d-j}{d}.\]

The basic inequalities $a_d\geq 0$, $a_{d-1}\geq 0$ and $h_i\geq 0$ for $i=0,\ldots,d$ are of course necessary conditions for a polynomial to be an Ehrhart polynomial. A complete classification of Ehrhart polynomials exists only in dimensions $1$ and $2$ \cite{scott}, whereas for higher dimensions the difficulty of finding a characterization is often assessed as notoriously hard; even in dimension $3$ conjectural inequalities for $h^*$-polynomials are broadly open, see \cite[Conjecture~8.7]{balletti}.  A major quest within discrete geometry consists of finding inequalities that Ehrhart polynomials must satisfy. Quite frequently, it is of interest to study inequalities that $h^*$-polynomials and Ehrhart polynomials must satisfy for \emph{specific} families of polytopes. 

Arguably, within the universe of inequalities that one could possibly consider, the two classes of inequalities that outstand the rest are those known as \emph{Ehrhart positivity} and \emph{$h^*$-unimodality}. A polytope $P$ is said to be \textbf{Ehrhart positive} if all coefficients of $E_P(x)$ are non-negative; for a detailed treatment of Ehrhart positivity we refer to Liu's survey \cite{liu-survey}. A polytope $P$ is said to be \textbf{$h^*$-unimodal} if its coefficients satisfy $h_0\leq \cdots \leq h_{j-1}\leq h_j\geq h_{j+1}\geq\cdots \geq h_d$ for some $0\leq j\leq d$; for a thorough source on $h^*$-unimodality we refer to an article by Braun \cite{braun}. 

Often, to prove that a family of polytopes --- and even quite specific polytopes, are Ehrhart positive or $h^*$-unimodal is a difficult problem. 

Since not all polytopes are Ehrhart positive or $h^*$-unimodal, it is convenient to focus our attention to restricted classes of polytopes possessing additional algebraic or combinatorial structure. For instance, much work in the literature is devoted to proving inequalities that $h^*$-polynomials of polytopes that have the \emph{integer decomposition property (IDP)} satisfy. The precise definition of IDP is stated in Section~\ref{sec:two}. The following is a famous special case of a conjecture appearing in \cite{brenti-update} and often attributed to Stanley; it also appears as a question in \cite[Question~1.1]{schepers-vanlangenhoven}.

\begin{conjecture}\label{conj:intro-idp-implies-unimodal}
    Every lattice polytope satisfying the integer decomposition property (IDP) has a unimodal $h^*$-vector.
\end{conjecture}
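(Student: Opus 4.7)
The plan is to attack Conjecture~\ref{conj:intro-idp-implies-unimodal} via the commutative algebra of the Ehrhart ring, with a combinatorial fallback through triangulations. Write $R_P = k[S_P]$ for the semigroup algebra, where $S_P \subseteq \mathbb{Z}^{n+1}$ is generated by $\{(x,1) : x \in P \cap \mathbb{Z}^n\}$. The IDP hypothesis is precisely the statement that $S_P$ coincides with the full set of integer points of the cone over $P$, so $S_P$ is a normal affine semigroup. By Hochster's theorem, $R_P$ is Cohen--Macaulay, and IDP also forces $R_P$ to be standard graded. Its Hilbert series equals $h_P^*(x)/(1-x)^{d+1}$, so the conjecture becomes the assertion that, after reduction by a linear system of parameters, the Hilbert function of $R_P$ is unimodal.

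The standard reduction is then to pass to the Artinian quotient $A = R_P/(\theta_1,\ldots,\theta_{d+1})$, which exists because $R_P$ is Cohen--Macaulay and standard graded, and whose Hilbert function is exactly $(h_0,\ldots,h_d)$. One would try to produce a \emph{weak Lefschetz element} $\ell \in A_1$, meaning an element for which every multiplication map $\cdot\ell^{j-i}\colon A_i \to A_j$ has maximal rank; its existence forces unimodality of the $h^*$-vector automatically, and a strong Lefschetz element would additionally yield log-concavity, in line with the log-concave Ehrhart series conjecture advertised in the abstract. A parallel combinatorial route would be to construct a regular unimodular triangulation of $P$ and then realise $h_P^*(x)$ as the $h$-polynomial of a shellable simplicial complex with enough extra structure (balancedness, flagness, a symmetric decomposition) to force unimodality directly.

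The main obstacle, in both directions, is that the bare hypothesis \textbf{IDP} does not obviously supply the extra structure either route needs. No general recipe is known for manufacturing Lefschetz elements in toric rings of normal lattice polytopes outside of highly symmetric or Gorenstein situations, and the existence of unimodular triangulations for IDP polytopes is itself a notorious open problem. I therefore expect any direct attempt along either line to stall at precisely the step where one must upgrade normality of $R_P$ to a Lefschetz property or to a regular unimodular subdivision; this is consistent with the many partial results, weaker variants and explicit counterexamples to surrounding conjectures that the present survey is organised around.
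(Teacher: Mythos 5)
This statement is a \emph{conjecture}, not a theorem: the paper states it as an open problem (attributed to Stanley via Brenti's survey, and appearing also as a question of Schepers and Van Langenhoven) and offers no proof. You correctly recognize this and, to your credit, do not pretend to close it --- your ``proposal'' is really an honest assessment of why both natural attack routes stall, which is exactly the state of the art. Your algebraic route (Artinian reduction of the Ehrhart ring and a hoped-for weak Lefschetz element) is in line with how the problem is approached in the literature; the paper's Theorem~\ref{thm:adiprasito-steinmeyer} records the strongest known partial result in this direction (strong bottom-heaviness and second-half unimodality for IDP polytopes, and full unimodality in the Gorenstein IDP case due to Adiprasito, Papadakis, Petrotou and Steinmeyer).

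One factual claim in your write-up is, however, wrong and should be corrected. You assert that ``the existence of unimodular triangulations for IDP polytopes is itself a notorious open problem.'' It is not open: it is \emph{false} in general. The paper's hierarchy (Theorem~\ref{thm:hierarchy}) is strictly increasing, and it cites Haase--Paffenholz--Piechnik--Santos and Bruns--Gubeladze for explicit IDP polytopes admitting no unimodular triangulation. So the combinatorial route you describe is not merely expected to stall; it is blocked outright for general IDP polytopes. (The genuinely open question in this vicinity is Oda's conjecture, which concerns \emph{smooth} polytopes, a different hypothesis.) A smaller quibble: the strong Lefschetz property does not by itself force log-concavity of the Hilbert function of an Artinian algebra, so the parenthetical tying SLP to the paper's log-concave Ehrhart series conjecture (Conjecture~\ref{conj:introidp-ehr-series-logconc}) overstates what SLP would deliver.
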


A sequence of positive real numbers $(a_0,\ldots,a_d)$ is said to be \textbf{log-concave} if $a_i^2\geq a_{i-1}a_{i+1}$ for every $1\leq i\leq d-1$.\footnote{In this article we will restrict the notion of log-concavity only to sequences of \emph{positive} real numbers, so whenever we say that a sequence is log-concave we are also affirming that its entries are strictly greater than $0$. In some sources such as \cite{brenti}, these sequences are denoted by $\mathrm{PF}_2([x^i])$. Notice that the sequence $(1,0,0,1)$ fulfills the conditions $a_i^2\geq a_{i-1}a_{i+1}$, but will not be considered as log-concave here.} 
A polynomial $p(x)=\sum_{j=0}^d a_j x^j$ is said to be unimodal (resp. log-concave) if the sequence of its coefficients $(a_0,\ldots,a_d)$ is unimodal (resp. log-concave). The following two facts are well-known:
\begin{itemize}
    \item If all roots of $p(x) = \sum_{j=0}^d a_j x^j$ are negative real numbers, then $p(x)$ is log-concave.
    \item If $p(x)$ is log-concave, then $p(x)$ is unimodal.
\end{itemize}
Apart from log-concavity, there is a related notion, that of $\gamma$-positivity, which applies only to palindromic polynomials, but that is also stronger than unimodality and weaker than real-rootedness. A polynomial $p(x) = \sum_{j=0}^d a_j x^j$ of degree $d$ is \emph{palindromic} if $a_j=a_{d-j}$ for each $j=0,\ldots,d$. It is straightforward to see that, in the case of palindromicity, the polynomial $p(x)$ can be written using a different basis, i.e., $p(x) = \sum_{j=0}^{\lfloor\frac{d}{2}\rfloor} \gamma_j\, x^j(x+1)^{d-2j}$. We say that $p(x)$ is \textbf{$\gamma$-positive} if all numbers $\gamma_0,\ldots,\gamma_{\lfloor\frac{d}{2}\rfloor}$ are non-negative. We refer to \cite{athanasiadis-gamma} for a detailed look at $\gamma$-positive polynomials appearing within various combinatorial frameworks.

Similarly, the integer decomposition property admits its own strengthenings and weakenings. For example, it is known that the existence of unimodular triangulations is enough to guarantee IDP-ness. Conjecture~\ref{conj:intro-idp-implies-unimodal}, i.e., ``IDP $\Rightarrow$ $h^*$-unimodality'', makes natural to formulate various similar questions, by either modifying the strength of the assumption or the strength of the conclusion. Moreover, the case of Gorenstein polytopes is of particular interest. It is known by a recent result of Adiprasito, Papadakis, Petrotou and Steinmeyer \cite{adiprasito-papadakis-petrotou-steinmeyer} that Gorenstein IDP polytopes are indeed $h^*$-unimodal. The Gorenstein-ness of a polytope is reflected very neatly in the $h^*$-polynomial, and is actually equivalent to its palindromicity, hence allowing one to formulate questions regarding $\gamma$-positivity as well.

In Section~\ref{sec:three} we will survey the state of the art regarding $h^*$-polynomial inequalities for polytopes that are IDP or have similar properties. In a number of cases, some of the inequalities that we include are known only to few specialists and, to the best of our knowledge, have never been considered explicitly in the Ehrhart theory literature. Also, we will provide examples and counterexamples for numerous assertions related to Conjecture~\ref{conj:intro-idp-implies-unimodal}, both in the general and the Gorenstein (or reflexive) case.

The framework of Ehrhart positivity may seem quite extraneous to people interested in $h^*$-polynomials. In fact, an article by Liu and Solus~\cite{liu-solus} provides several examples that exhibit that there is no general connection between the notion of Ehrhart positivity and $h^*$-unimodality. In Section~\ref{sec:four} we will survey some pathological examples of polytopes failing to be Ehrhart positive, and we will show that in some cases \emph{there is} a connection between Ehrhart positivity and $h^*$-unimodality (actually, $h^*$-real-rootedness). We will establish some results that link these two notions; we believe they can be of good use for combinatorialists. Moreover, we will indicate families of polytopes arising in combinatorics for which we conjecture that our techniques can be applied.

In Section~\ref{sec:five} we will consider a different type of inequalities. The motivating question is the following conjecture.

\begin{conjecture}\label{conj:introidp-ehr-series-logconc}
    If $P$ is an IDP lattice polytope, then the sequence of evaluations of the Ehrhart polynomial at non-negative integers, $E_P(0), E_P(1), E_P(2), \ldots$ is log-concave.
\end{conjecture}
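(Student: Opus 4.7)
The plan is to establish $E_P(m)^2 \geq E_P(m-1)\, E_P(m+1)$ for every $m \geq 1$ by combining a combinatorial transfer argument made possible by the IDP property with algebraic input from the Ehrhart ring. The natural first attempt is to construct an injection
\[ \varphi \colon \bigl((m-1)P \cap \mathbb{Z}^n\bigr) \times \bigl((m+1)P \cap \mathbb{Z}^n\bigr) \hookrightarrow \bigl(mP \cap \mathbb{Z}^n\bigr)^2. \]
Given $(u,v)$ in the domain, IDP provides a decomposition $v = q_1 + \cdots + q_{m+1}$ with each $q_i \in P \cap \mathbb{Z}^n$, and the tentative map $(u,v) \mapsto (u + q_1,\, q_2 + \cdots + q_{m+1})$ lands in the codomain since $u + q_1 \in mP$ and $q_2 + \cdots + q_{m+1} \in mP$. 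The serious issue is that IDP decompositions are vastly non-unique, so one must pin down a canonical rule (for example, the lexicographically smallest decomposition relative to a fixed total order on $P \cap \mathbb{Z}^n$) and then verify that $\varphi$ recovers the pair $(u, v)$ from its image. I expect this to be the main obstacle: from $(u + q_1,\, q_2 + \cdots + q_{m+1})$ one must reconstruct $q_1$ (equivalently $u$) in a manner compatible with the canonical decomposition rule, a genuine combinatorial constraint that already restricts collisions to the fibers $u+v = w$ of the addition map.

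If the direct injection resists, the fallback is algebraic: since $P$ is IDP, the Ehrhart ring $R_P = \bigoplus_{m \geq 0} \mathbb{C}[mP \cap \mathbb{Z}^n]$ is standard graded and normal, hence Cohen--Macaulay by Hochster's theorem, and the conjecture is precisely the log-concavity of its Hilbert function. Setting $X_P = \operatorname{Proj} R_P$ with the tautological very ample line bundle $L$, one has $E_P(m) = h^0(X_P, mL)$ for $m \geq 0$, and I would try to derive the desired log-concavity from Khovanskii--Teissier or Hodge--Riemann-type inequalities on the projective toric variety $X_P$, in the spirit of the Hodge-theoretic machinery that \cite{adiprasito-papadakis-petrotou-steinmeyer} deployed to resolve the Gorenstein IDP unimodality question. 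A first easy consequence of IDP in this language is the surjectivity of the multiplication $H^0(aL)\otimes H^0(bL) \to H^0((a+b)L)$, giving $E_P(a+b) \leq E_P(a)\,E_P(b)$, but this by itself does not compare $E_P(m-1)\,E_P(m+1)$ with $E_P(m)^2$ and additional Hodge-theoretic content is required.

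A complementary angle works with the expansion $E_P(m) = \sum_{j=0}^d h^*_j \binom{m+d-j}{d}$: each summand is log-concave in $m$, so one could try to promote log-concavity to the sum by invoking additional linear or quadratic relations among the $h^*_j$ special to IDP polytopes (Stapledon-type inequalities, or a Lorentzian/Newton-inequality framework in the sense of Br\"and\'en--Huh). Since sums of log-concave sequences are not log-concave in general, and since log-concavity of Hilbert functions fails for arbitrary standard graded domains, the proof must genuinely use a feature special to Ehrhart rings of IDP polytopes; this is where I expect the sharpest difficulty to lie. A reasonable intermediate program is to verify the conjecture first for simplices, order polytopes, and unimodular-triangulable polytopes, each of which isolates a distinct structural phenomenon --- and conversely, producing a single IDP polytope whose Ehrhart evaluations are not log-concave would at once refute the conjecture and reshape the problem.
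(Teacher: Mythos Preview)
The statement you are attempting to prove is a \emph{conjecture}: the paper presents it as Conjecture~\ref{conj:introidp-ehr-series-logconc} (restated later as Conjecture~\ref{conj:idp-ehr-series-logconc}) and explicitly lists it among the open problems in Section~\ref{sec:six}. There is no proof in the paper to compare your proposal against; the paper only establishes partial results (Proposition~\ref{prop:hstar-log-concave-implies-series-log-concave}, the cases of order polytopes and lattice polygons) and constructs examples delimiting the conjecture's scope.

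Your proposal reads as a research outline rather than a proof, and you identify the obstacles yourself. The injection $\varphi$ you describe is not injective without a canonical choice of $q_1$, and you do not supply one that works; indeed, the surjectivity $E_P(a)E_P(b) \geq E_P(a+b)$ you derive from IDP is the only inequality one gets directly, and it points the wrong way for log-concavity. The Hodge-theoretic fallback is a reasonable line of attack, but note that Khovanskii--Teissier inequalities on $X_P$ give statements about intersection numbers (volumes), not about $h^0$ of arbitrary multiples of $L$, and the passage from one to the other is precisely where the conjecture lives. The $h^*$-expansion approach runs into the issue you name: sums of log-concave sequences are not log-concave, and the paper explicitly shows (Question~\ref{question-very-ample-log-concave}(b)) that it is not even known whether IDP polytopes have log-concave $h^*$-vectors, so you cannot assume extra structure on the $h^*_j$.

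In short, none of the three routes you sketch closes to a proof, and the paper does not claim one. If you wish to contribute here, the paper's own suggestion is to attack special cases or to search for a counterexample.
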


We will study how this log-concavity of the Ehrhart series is related with inequalities for $h^*$-polynomials. For instance, we will show that it is strictly weaker than the $h^*$-polynomial being log-concave, and that it holds no relation at all with $h^*$-unimodality. In particular, one should view Conjecture~\ref{conj:introidp-ehr-series-logconc} as a counterpart of Conjecture~\ref{conj:intro-idp-implies-unimodal} --- it does not imply it, nor is implied by it. We will also investigate the situation if one modifies the assumption of IDP-ness in the statement of our conjecture.

Throughout this article much emphasis will be given to several new problems and conjectures that we will propose. We will collect and summarize all of them in Section~\ref{sec:six}. Since the amount of involved notions is quite extensive, and the way concepts relate to one another might seem confusing for a non-expert, we also included in Figure~\ref{fig:flowchart} a landscape of the implications discussed throughout the paper --- we point out that the dashed arrows correspond to conjectures, whereas the solid arrows are asserted as true. The reader may forgive that, in order to obtain an intelligible planar diagram, we avoided including certain arrows --- e.g., zonotopes are IDP, or Pitman-Stanley polytopes are $\mathcal{Y}$-generalized permutohedra --- and we also omitted the inclusion of certain families of polytopes that are mentioned by passing in the main body of the article.

\begin{figure}[ht]\scalebox{0.80}{
\begin{tikzpicture}
\tikzset{node distance = 1.5cm and 1cm}
\tikzstyle{arrow} = [-{>[scale=1.8,
          length=2,
          width=3.5]},double]

\node (CL) [rectan] {CL-polytope};
\node (magic-basis) [rectan, left =of CL] {Magic positive};
\node (ehr-pos) [rectan, below = of CL] {Ehrhart positive};
\node (ghost) [below = of ehr-pos, xshift = 0.13cm, yshift = 1.5cm] {};
\node (h*-real-root) [rectan, left=of ehr-pos,] {$h^*$-real-rooted};
\node (h*-log-conc) [rectan, below =of h*-real-root,yshift=-1.6cm] {$h^*$-log-concave};
\node (h*-unimodal) [rectan, below =of h*-log-conc] {$h^*$-unimodal};
\node (gamma-pos) [rectan, left =of h*-log-conc] {$h^*$-gamma-positive (if Gorenstein)};
\node (ehr-real-root) [rectan, below =of ehr-pos,yshift=1.2cm] {Ehrhart polynomial real-rooted};

\node (ehr-log-concave) [rectan, right =of h*-log-conc] {Ehrhart series log-concave};

\node (int-ehr-log-concave) [rectan, above =of ehr-log-concave,yshift=-1cm] {Interior Ehrhart series log-concave};

\node (idp) [rectan, right =of h*-unimodal] {IDP};
\node (very-ample) [rectan, right =of idp] {Very ample};
\node (spanning) [rectan, above =of very-ample] {Spanning polytope};
\node (rut) [rectan, below =of idp, xshift=-4cm] {Regular unimodular triangulation};
\node (qt) [rectan, below =of rut,yshift=-2cm] {Quadratic Triangulation};
\node (smooth) [rectan, left =of qt] {Smooth polytope};
\node (zonotope) [rectan, left =of magic-basis] {Zonotope};
\node (pitman-stanley) [rectan, below = of zonotope] {Pitman-Stanley polytope};
\node (compressed) [rectan, above =of smooth, xshift=1.5cm] {Compressed};

\node (alcoved) [rectan, below =of qt] {Alcoved polytope};
\node (y-gen-per) [rectan, right =of rut] {$\mathcal{Y}$-generalized permutohedra};
\node (nestohedra) [rectan, right =of y-gen-per] {Nestohedra};
\node (partial-perm) [rectan, below =of nestohedra] {Partial Permutohedra for $n\geq m-1$};
\node (gen-per) [rectan, right =of qt] {Generalized permutohedra};
\node (polypositroid) [rectan, below =of gen-per] {Polypositroid};
\node (matroid) [rectan, right =of polypositroid] {Matroid};
\node (positroid) [rectan, below =of polypositroid] {Positroid};
\node (shard) [rectan, right =of positroid] {Shard polytope};

\node (order-polytope) [rectan, left =of alcoved] {Order polytope};
\node (sep) [rectan, left =of rut, xshift=-0.25cm] {Symmetric edge polytope};

\draw [arrow] (CL) -- (ehr-pos);
\draw [arrow] (magic-basis) -- (ehr-pos);
\draw [arrow] (magic-basis) -- (h*-real-root);
\draw [arrow] (h*-real-root) -- (gamma-pos);
\draw (ehr-real-root) -- (ehr-pos);
\draw [arrow] (ghost) -| ++(-2.5cm,0) |- (h*-real-root);
\draw [arrow] (h*-real-root) -- (h*-log-conc);
\draw [arrow] (h*-log-conc) -- (h*-unimodal);
\draw [arrow] (h*-log-conc.30) |- (int-ehr-log-concave);
\draw [arrow] (h*-log-conc) -- (ehr-log-concave);
\draw [arrow] (gamma-pos) -- (h*-unimodal);
\draw [arrow,dashed] (idp) -- (h*-unimodal);
\draw [arrow,dashed] (idp) -- (ehr-log-concave);
\draw [arrow] (rut) -- (idp);
\draw [arrow] (idp) -- (very-ample);
\draw [arrow,dashed] (very-ample.120) -- ++(0cm,1cm) -- ++(-2cm,0) |- (int-ehr-log-concave);
\draw [arrow] (very-ample) -- (spanning);
\draw [arrow] (qt) -- (rut);
\draw [arrow] (zonotope) -- (magic-basis);
\draw [arrow] (pitman-stanley) -- (magic-basis);

\draw [arrow,dashed] (qt.110) -- ++(0,1cm) -- ++(-6.5cm,0cm) |- (gamma-pos);
\draw [arrow] (compressed) -- (rut);
\draw [arrow] (alcoved) -- (qt);
\draw [arrow] (y-gen-per) -- (rut);
\draw [arrow] (nestohedra) -- (y-gen-per);
\draw [arrow] (partial-perm) -- (y-gen-per);
\draw [arrow] (sep) -- (rut);
\draw [arrow,dashed] (sep) -- (gamma-pos);
\draw [arrow] (y-gen-per) -- (gen-per);
\draw [arrow] (y-gen-per) -- ++(0cm,1cm) -- ++(6cm,0) |- (ehr-pos.357);
\draw [arrow,dashed] (gen-per) -- (qt);
\draw [arrow] (gen-per.120) -- ++(0cm,1cm) -- ++(-3cm,0cm) -- (rut.334);
\draw [arrow] (alcoved) -- (qt);
\draw [arrow,dashed] (smooth) -- (qt);
\draw [arrow] (polypositroid) -- (alcoved);
\draw [arrow] (polypositroid) -- (gen-per);
\draw [arrow] (matroid) -- (gen-per);
\draw [arrow] (positroid) -- (matroid);
\draw [arrow,dashed] (positroid) -- ++(0,-1cm) -- ++(6.5cm,0) |- (ehr-pos.6);

\draw [arrow] (positroid) -- (polypositroid);
\draw [arrow] (order-polytope) -- (alcoved);
\draw [arrow] (order-polytope) -| ++(-2.8cm,0) |- (gamma-pos.175);
\draw [arrow] (shard) -- (positroid);

\end{tikzpicture}}\caption{A partial view of the landscape (dashed arrows correspond to open conjectures).}\label{fig:flowchart}
\end{figure}
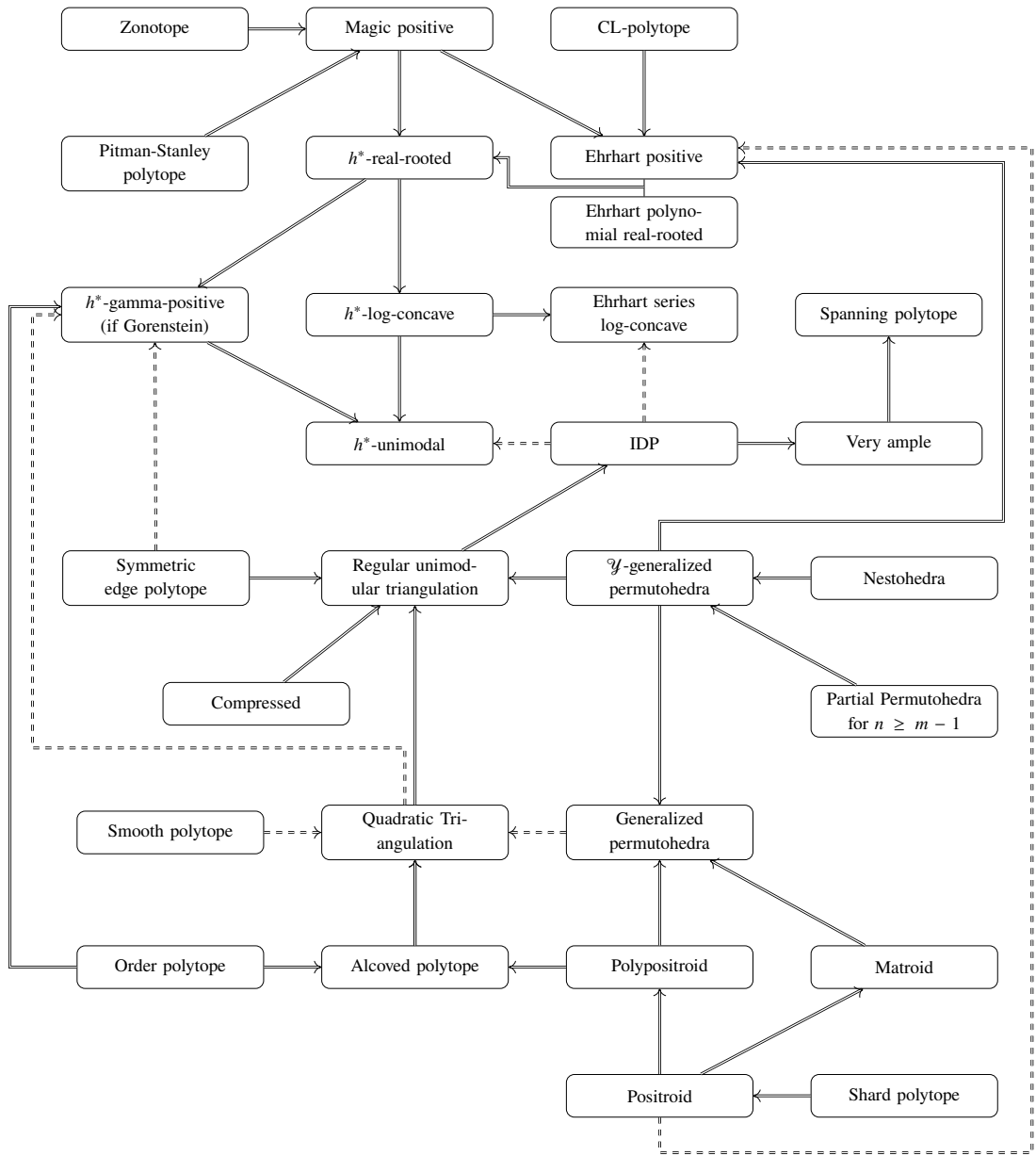

\section{Preliminaries and basic examples}\label{sec:two}

\subsection{Preliminaries in polytopes}

Let us start our discussion with a quick summary of some basic notions that we will refer to throughout the paper. Let us fix a lattice polytope $P\subseteq\mathbb{R}^n$ having dimension $d=\dim P$.

We will say that $P$ has the \emph{integer decomposition property} (IDP)\footnote{We say that $P$ \emph{is} IDP or \emph{has} the IDP interchangeably.} if for every positive integer $k$, every lattice point $p\in kP\cap\mathbb{Z}^n$ can be written as $p = p_1+\cdots+p_k$ where $p_1,\ldots,p_k\in P\cap \mathbb{Z}^n$. If instead of requiring this property for all positive integers $k$, one only requires it for all sufficiently large $k$, the resulting notion is that of \emph{very ample} polytope. If the affine span over $\mathbb{Z}$ of all the points in $P\cap \mathbb{Z}^n$, denoted $\mathrm{aff}_{\mathbb{Z}}(P\cap \mathbb{Z}^n)$, coincides with the lattice $\mathbb{Z}^n$, then we say that $P$ is a \emph{spanning polytope}. 

Let us make a short digression about the terminology. In some sources, e.g. \cite{bruns-gubeladze-book}, a polytope $P$ is said to be ``integrally closed'' when it is IDP, whereas in others, e.g. \cite{cox-little-schenck}, $P$ is said to be ``normal''. The word \emph{normal} is ambiguous in this context, because in \cite{bruns-gubeladze-book} a polytope $P\subseteq \mathbb{R}^n$ is said to be normal if the following equality holds for all positive integers $k$:
\[\underbrace{P \cap \mathbb{Z}^n + \cdots + P \cap \mathbb{Z}^n}_k=kP \cap \mathrm{aff}_{\mathbb{Z}}(P \cap \mathbb{Z}).\] This alternative notion of normality coincides with our definition of IDP when $P$ is a spanning polytope, but not in general.

On the other hand, a \emph{unimodular triangulation} of $P$ is a triangulation of $P$ into simplices having volume $1$. A triangulation is \emph{regular} if it arises via projecting the lower-hull of a lifting of the vertices of the polytope. For a detailed treatment on triangulations, we refer to \cite{deloera-rambau-santos}. A triangulation is said to be \emph{flag} if all of its minimal non-faces have cardinality $2$. A \emph{quadratic triangulation} is by definition a regular unimodular flag triangulation.

The following establishes a hierarchy among these properties.

\begin{theorem}\label{thm:hierarchy}
    The following properties are increasing in strength:
    \begin{enumerate}[\normalfont(i)]
        \item $P$ is spanning.
        \item $P$ is very ample.
        \item $P$ has IDP.
        \item $P$ has a unimodular triangulation.
        \item $P$ has a regular unimodular triangulation.
        \item $P$ has a quadratic triangulation.
    \end{enumerate}
    In other words, if $P$ satisfies one of these properties, then it satisfies all of the previous ones.
\end{theorem}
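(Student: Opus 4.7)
The plan is to verify the chain of implications $\text{(vi)} \Rightarrow \text{(v)} \Rightarrow \text{(iv)} \Rightarrow \text{(iii)} \Rightarrow \text{(ii)} \Rightarrow \text{(i)}$ one link at a time. Three of them are essentially tautological from the definitions: $\text{(vi)} \Rightarrow \text{(v)}$ is dropping the flag condition, $\text{(v)} \Rightarrow \text{(iv)}$ is dropping regularity, and $\text{(iii)} \Rightarrow \text{(ii)}$ is the observation that a property holding for every positive integer $k$ in particular holds for all sufficiently large $k$. The two substantive steps are $\text{(iv)} \Rightarrow \text{(iii)}$ and $\text{(ii)} \Rightarrow \text{(i)}$.

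For $\text{(iv)} \Rightarrow \text{(iii)}$, I would pass to the cone over $P$. Embed $P$ at height $1$ in $\mathbb{R}^{n+1}$ and form $C = \mathrm{cone}(P \times \{1\})$. A unimodular triangulation of $P$ lifts to a decomposition of $C$ into simplicial cones, each generated by elements $(v_i, 1)$ where the $v_i$ are vertices of a unimodular simplex in the triangulation, in particular lattice points of $P$. Given $p \in kP \cap \mathbb{Z}^n$, the lattice point $(p, k) \in C \cap \mathbb{Z}^{n+1}$ lies in some such subcone, and the classical fact that the half-open parallelepiped spanned by the generators of a unimodular cone contains no nonzero lattice point implies that $(p, k)$ is a non-negative integer combination $\sum c_i (v_i, 1)$ of the generators. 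Reading off the last coordinate forces $\sum c_i = k$, so $p = \sum c_i v_i$ realizes $p$ as a sum of $k$ lattice points of $P$ counted with multiplicity, which is exactly the IDP.

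For $\text{(ii)} \Rightarrow \text{(i)}$, assume $P$ is very ample and work in the affine lattice $L_P$ generated by $P \cap \mathbb{Z}^n$ (so one may as well assume $P$ is full-dimensional with ambient lattice $\mathbb{Z}^n$). Fix any $v \in P \cap \mathbb{Z}^n$. For an arbitrary lattice point $q \in \mathbb{Z}^n$ in the affine span of $P$, note that $(q + (k-1)v)/k = v + (q-v)/k$ tends to $v \in P$ as $k \to \infty$; by closedness of $P$, the point $q + (k-1)v$ lies in $kP$ for every sufficiently large $k$. Very ampleness then produces a decomposition $q + (k-1)v = p_1 + \cdots + p_k$ with $p_i \in P \cap \mathbb{Z}^n$, whence $q = p_1 + \cdots + p_k - (k-1)v$ is an integer affine combination of lattice points of $P$ (coefficients summing to $1$), placing $q$ in $\mathrm{aff}_{\mathbb{Z}}(P \cap \mathbb{Z}^n)$. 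The main obstacle is the lattice-point content of unimodular cones invoked in $\text{(iv)} \Rightarrow \text{(iii)}$; this is classical, but it is the only non-elementary ingredient and underlies the entire chain.
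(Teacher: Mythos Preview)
Your treatment of the chain is more detailed than the paper's, which simply declares most implications definitional and cites \cite{beck-delgado-gubeladze-michalek} for $\text{(ii)}\Rightarrow\text{(i)}$. Your argument for $\text{(iv)}\Rightarrow\text{(iii)}$ via the cone over $P$ is correct and standard.

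However, your proof of $\text{(ii)}\Rightarrow\text{(i)}$ contains a genuine gap. You write that since $(q+(k-1)v)/k = v + (q-v)/k \to v \in P$, ``by closedness of $P$'' the point lies in $P$ for large $k$. Closedness says limits of points \emph{in} $P$ stay in $P$; it does \emph{not} say that a sequence converging to a point of $P$ is eventually in $P$. For that you would need $v$ to lie in the interior of $P$, which it need not. Concretely, take $P=\mathrm{conv}\{(0,0),(1,0),(0,1)\}$ and $q=(-1,3)$. The only lattice points of $P$ are its three vertices, and one checks directly that $q+(k-1)v\notin kP$ for every $k\geq 1$ and every choice of $v$ among them (e.g.\ for $v=(0,0)$ the first coordinate is $-1<0$; for $v=(1,0)$ the coordinate sum is $k+1>k$; for $v=(0,1)$ the first coordinate is again $-1$).

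The fix is to replace the single boundary point $v$ by a combination whose average lies in the interior. Choose affinely independent lattice points $p_0,\dots,p_d\in P$, so that the barycenter $b=\tfrac{1}{d+1}(p_0+\cdots+p_d)$ lies in $\mathrm{int}(P)$. For $k=(m-1)(d+1)+1$ the lattice point $x=q+(m-1)(p_0+\cdots+p_d)$ satisfies $x/k\to b\in\mathrm{int}(P)$, hence $x\in kP$ for large $m$; very ampleness then yields $x=w_1+\cdots+w_k$ with $w_i\in P\cap\mathbb{Z}^n$, and $q=\sum w_i-(m-1)\sum p_j$ is an integer affine combination of lattice points of $P$ with coefficient sum $k-(m-1)(d+1)=1$. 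Alternatively, one can invoke the vertex-semigroup characterization of very ampleness, as the paper does.
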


Most of the implications are immediate by the definitions. The fact that very ample polytopes are spanning follows from the equivalent characterization on very ample polytopes described in \cite[Proposition 2.1]{beck-delgado-gubeladze-michalek}. 
A related list of properties is described in \cite[Section~1.2.5]{haase-paffenholz-piechnik-santos} (see also the references therein); it is worth mentioning that properties such as the existence of \emph{unimodular covers} lies between having the IDP and the existence of unimodular triangulations. The above properties are \emph{strictly} increasing in strength in the sense that one can prove that none of the reverse implications holds in general; we refer to \cite{haase-paffenholz-piechnik-santos} and \cite[Chapter~2.D]{bruns-gubeladze-book} for specific references on examples showing that 
    \[ \text{ IDP \enspace$\centernot{\Longrightarrow}$\enspace Unimodular triangulation \enspace$\centernot{\Longrightarrow}$ \enspace Regular unimodular triangulation},\]
and we refer to \cite{hofscheier-katthan-nill0}, \cite{bruns-quest}, or \cite[Section~9]{balletti} for examples showing that 
    \[ \text{ Spanning \enspace$\centernot{\Longrightarrow}$\enspace Very ample \enspace$\centernot{\Longrightarrow}$ \enspace IDP}.\]
(However, below we will encounter numerous examples of polytopes that certify this second batch of false implications.)

Let us mention another property that is quite important, especially from the toric geometry perspective on polytopes. The polytope $P\subseteq \mathbb{R}^n$ is said to be \emph{smooth} if $P$ is simple and its primitive edge directions at any vertex form a basis of the restricted lattice $\text{aff}(P)\cap \mathbb{Z}^n$. A famous question, known as ``Oda's conjecture'', posed by Oda in \cite{oda}, asks whether all smooth polytopes are IDP. We point out that stronger versions of this conjecture are open as well: for example, it is not known whether all smooth polytopes have regular unimodular triangulations or even quadratic triangulations (see, e.g., \cite[Question~16.6.2]{handbook-dcg} by Lee and Santos and \cite[Question~5.1]{cottonwood} attributed to Knutson).

Let us now state explicitly a result that later will motivate some questions about $h^*$-polynomials. This asserts that all the properties of our hierarchy are preserved under Cartesian products of polytopes.

\begin{proposition}\label{prop:products-ok}
    Let $P$ and $Q$ be lattice polytopes.
    \begin{enumerate}[\normalfont(i)]
        \item If $P$ and $Q$ are spanning, then $P\times Q$ is spanning.
        \item If $P$ and $Q$ are very ample, then $P \times Q$ is very ample. 
        \item If $P$ and $Q$ are IDP, then $P\times Q$ is IDP.
        \item If $P$ and $Q$ have unimodular triangulations, then $P\times Q$ has a unimodular triangulation. 
        \item If $P$ and $Q$ have regular unimodular triangulations, then $P\times Q$ has a regular unimodular triangulation. 
        \item If $P$ and $Q$ have quadratic triangulations, then $P\times Q$ has a quadratic triangulation.
    \end{enumerate}
\end{proposition}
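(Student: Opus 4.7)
The plan is to split the argument along the natural divide in the statement: properties (i)--(iii) follow from a componentwise decomposition of lattice points, while (iv)--(vi) require assembling triangulations of $P$ and $Q$ into one of $P\times Q$ by means of the classical \emph{staircase triangulation} of each product of simplices.

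I will start with (iii). The identifications $(P\times Q)\cap \mathbb{Z}^{n+m}=(P\cap \mathbb{Z}^n)\times (Q\cap \mathbb{Z}^m)$ and $k(P\times Q)=kP\times kQ$ reduce the problem to separately decomposing $x\in kP\cap \mathbb{Z}^n$ as $x=p_1+\cdots+p_k$ and $y\in kQ\cap \mathbb{Z}^m$ as $y=q_1+\cdots+q_k$, and then pairing coordinates to obtain $(x,y)=\sum_{i=1}^k (p_i,q_i)$. The same argument gives (ii) once we require the decomposition only for $k$ above the larger of the two very-ampleness thresholds. For (i), I will use the characterization that $P$ is spanning if and only if the differences of its lattice points generate $\mathbb{Z}^n$ as an abelian group; fixing a reference point $(p_0,q_0)\in (P\times Q)\cap \mathbb{Z}^{n+m}$, the differences $(p,q_0)-(p_0,q_0)$ and $(p_0,q)-(p_0,q_0)$ lie in the translated product polytope and together generate $\mathbb{Z}^n\oplus \mathbb{Z}^m$.

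For (iv), given unimodular triangulations $\mathcal{T}_P$ and $\mathcal{T}_Q$, I will consider the \emph{box subdivision} $\{\sigma\times \tau : \sigma\in \mathcal{T}_P,\, \tau\in \mathcal{T}_Q\}$ and refine each cell by the staircase triangulation, whose maximal simplices are indexed by monotone lattice paths from $(0,0)$ to $(\dim\sigma,\dim\tau)$ once linear orders on the vertices of $\sigma$ and $\tau$ are fixed. Each such simplex is unimodular because $\sigma$ and $\tau$ are. For (v), given heights $\omega_P$ and $\omega_Q$ realizing the two regular triangulations, a combined height of the form $\omega(x,y)=\omega_P(x)+\omega_Q(y)+\varepsilon\, h(x,y)$, with a generic auxiliary function $h$ selecting the staircase orientation on each cell and small enough $\varepsilon>0$, realizes the refinement as a regular triangulation. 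For (vi), the key observation is that the staircase triangulation of $\Delta_a\times \Delta_b$ is precisely the order complex of the product poset $[a+1]\times [b+1]$, which is flag, as every order complex is.

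The bulk of the technical work lies in (vi): one must rule out minimal non-faces that bridge different cells of the box subdivision. The cleanest approach is to impose a single global lex order on the vertices so that the staircase triangulations of adjacent cells glue consistently, and then to verify pairwise that two vertices fail to lie in a common simplex only because their projections to $P$ or to $Q$ are non-adjacent in $\mathcal{T}_P$ or $\mathcal{T}_Q$. Detailed Gr\"obner-theoretic treatments of this construction in terms of initial ideals of the toric rings of $P$, $Q$ and $P\times Q$ can be found in \cite{haase-paffenholz-piechnik-santos}.
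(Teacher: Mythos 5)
Your argument is correct and follows the standard route --- componentwise decomposition of lattice points for (i)--(iii) and the staircase refinement of the box subdivision $\{\sigma\times\tau\}$ for (iv)--(vi) --- which is exactly what the paper delegates to \cite[Theorem~2.4.7]{cox-little-schenck} and \cite[Proposition~2.11]{haase-paffenholz-piechnik-santos}; the paper's own proof consists of these citations together with the remark that the spanning case is immediate from the definition. You have essentially unfolded the cited references rather than found a different proof, so no comparison of approaches is called for.
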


\begin{proof}
    For spanning polytopes, this easily follows from the definition. For very ampleness and IDP, see \cite[Theorem~2.4.7]{cox-little-schenck}. For unimodular, regular unimodular, and quadratic triangulations see \cite[Proposition~2.11]{haase-paffenholz-piechnik-santos} and the discussion thereafter. 
\end{proof}

\subsection{Inequalities for general polytopes}

As was explained in the introduction, a key question in Ehrhart theory is that of finding inequalities that the coefficients of $h^*$-polynomials \emph{must} satisfy. Although we will be mainly concerned with polytopes satisfying additional assumptions (such as being IDP or having a quadratic triangulation), let us mention briefly the state of the art regarding \emph{arbitrary} polytopes. If $P\subseteq\mathbb{R}^n$ is a lattice polytope, we will often denote the dimension of $P$ by $d$ and the degree of the polynomial $h^*_P(x)$ by $s$. 

Let us fix a polytope $P$ having $h^*$-vector $(h_0,\ldots,h_d)$. Arguably, the most elementary inequality that one may derive is the one asserting that $h_1 \geq h_d$. This follows directly from the geometric interpretation of these two coefficients, i.e., $h_1=|P \cap \mathbb{Z}^d| - (d+1)$ and $h_d=|P^\circ \cap \mathbb{Z}^d|$, where $P^\circ$ denotes the relative interior of $P$ (see \cite[Corollary~3.16 and Exercise~4.9]{beck-robins}). 

We recall once again that the most fundamental inequalities assert that the entries of the $h^*$-vector are all non-negative, i.e., $h_i\geq 0$ for $i=0,\ldots,d$. This was proved by Stanley in \cite{stanley-hstar}.

The following result (appearing in this form in \cite{stapledon0}) summarizes several important inequalities that the $h^*$-polynomial of an arbitrary polytope must satisfy.

\begin{theorem}
    Let $P$ be a lattice polytope of dimension $d$ and let $s=\deg h^*_P(x)$. Then, the $h^*$-vector of $P$ satisfies the following inequalities:
    \begin{itemize}
        \item $h_2+h_3+\cdots+h_{i} \geq h_{d-1}+h_{d-2}+\cdots+h_{d-i+1}$ for every $2\leq i\leq \lfloor\frac{d}{2}\rfloor$.
        \item $h_0+h_1+\cdots+h_i \leq h_s+h_{s-1} + \cdots + h_{s-i}$ for every $0\leq i\leq \lfloor \frac{s}{2} \rfloor$.
        \item If $s=d$, then $h_1\leq h_i$ for all $1\leq i\leq d-1$.
        \item If $s\leq d-1$, then $h_0+h_1 \leq h_i + h_{i-1} + \cdots + h_{i-(d-s)}$ for all $1\leq i\leq d-1$.
    \end{itemize}
\end{theorem}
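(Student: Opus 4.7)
The plan is to deduce all four families of inequalities simultaneously from Stapledon's symmetric decomposition of the $h^*$-polynomial, which is the natural unifying framework. With $l := d+1-s$ denoting the codegree of $P$, I would invoke Stapledon's theorem \cite{stapledon0}: there exist polynomials $a(t) = \sum_{i=0}^{d} a_i t^i$ and $b(t) = \sum_{i=0}^{s-1} b_i t^i$ with non-negative integer coefficients, both palindromic ($a_i = a_{d-i}$ and $b_i = b_{s-1-i}$), such that
$$(1 + t + t^2 + \cdots + t^{l-1})\, h^*_P(t) \;=\; a(t) + t^l\, b(t). \qquad (\star)$$
Non-negativity of the $a_i$'s is a clever rearrangement of Stanley's $h^*$-non-negativity theorem, while non-negativity of the $b_i$'s is obtained by applying Ehrhart reciprocity to the interior $h^*$-polynomial.

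Alongside $(\star)$ I would record the sharpened Hibi--Stapledon monotonicity: both $a$ and $b$ are bounded below by their constant terms on the interior of their support, i.e.\ $a_0 \leq a_i$ for $1 \leq i \leq d-1$, and a similar statement for $b$. These follow from Stanley's monotonicity theorem applied to the inclusion of $P$ inside a suitable reflexive (or high-enough) dilation, transferring lower bounds from a larger polytope to $P$; combined with palindromicity they fully describe the shape of $a$ and $b$ near their endpoints.

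With $(\star)$ and the lower bounds in hand, each of the four inequalities reduces to direct coefficient extraction. For (3), specializing to $l=1$ collapses $(\star)$ to $h^*_P(t) = a(t) + t b(t)$, so that $h_i = a_i + b_{i-1}$ for $1 \leq i \leq d-1$ with boundary values $h_0 = a_0 = 1$ and $h_d = b_{d-1}$; comparing with $h_1 = a_1 + b_0$ and using the Hibi monotonicity on both $a$ and $b$ yields $h_1 \leq h_i$. For (4), the analogous computation for general $l \geq 2$ produces the stated window-sum on the right-hand side while $h_0 + h_1 = a_0 + a_1$ appears on the left. For (1) and (2), palindromicity of $a$ and $b$ translates, after summing the coefficient identities arising from $(\star)$, directly into the stated symmetric partial-sum comparisons; the non-negativity of $a$ (resp.\ $b$) makes the subtracted quantities non-negative, giving the desired direction of the inequality.

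The main obstacle is proving $(\star)$ together with the Hibi--Stapledon monotonicity on the constant term of $a$ and $b$; this is the deep ingredient, requiring a careful combination of Stanley's non-negativity, Ehrhart reciprocity, and monotonicity under polytope inclusion. Once these are secured, the derivation of (1)--(4) is routine coefficient-chasing. An alternative, more fragmented route would be to prove each inequality by its historical source (Hibi for (1) and (3), Stanley's monotonicity theorem for (2), Stapledon for (4)), but the $(a,b)$-decomposition approach has the advantage of yielding all four at once from a single structural identity.
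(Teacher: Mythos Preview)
The paper does not actually prove this theorem: immediately after the statement it simply attributes each item to its original source (Hibi and Stapledon for (1), Stanley for (2), Hibi for (3), Stapledon for (4)) and moves on. So there is no argument to compare against; your proposal is already far more detailed than what the paper offers, and your instinct to use Stapledon's $(a,b)$-decomposition as a unifying device is exactly the viewpoint of \cite{stapledon0}, which the paper cites.

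That said, your sketch has two issues worth flagging. First, a minor slip: for $l\ge 2$ the coefficient identity gives $h_0=a_0$ and $h_0+h_1=a_1$ (not $a_0+a_1$), so in (4) the left-hand side is $a_1$ and the inequality $a_1\le a_i$ is precisely what you need --- but you wrote only the weaker $a_0\le a_i$ in your monotonicity clause. Second, and more seriously, your derivation of (3) assumes a ``similar statement for $b$'' (i.e.\ $b_0\le b_{i-1}$). This is not part of the standard Stapledon package; the theorem in \cite{stapledon0} gives non-negativity of the $b_i$ and the chain $1=a_0\le a_1\le a_i$, but no lower bound of the form $b_0\le b_j$. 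In the $l=1$ case one has $h_i-h_1=(a_i-a_1)+(b_{i-1}-b_0)$, and for $d=3$ the $a$-part vanishes by palindromicity, so (3) is \emph{equivalent} to $b_0\le b_1$. Invoking $b$-monotonicity therefore assumes what you are trying to prove. Hibi's original proof of (3) (the one the paper cites) proceeds differently, via a direct lattice-point counting argument using the interior point; you would need either to import that argument or to give an independent proof of the $b$-inequality before the decomposition route closes for item (3).
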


The first set of inequalities is mentioned without proof by Hibi in \cite[Remark~1.4(c)]{hibi-lbt} and proved in \cite{stapledon0}. The second one was first shown by Stanley in \cite[Proposition~4.1]{stanley-idp-ineq}. The third is commonly known as ``Hibi's lower bound theorem'' and is the main result in \cite{hibi-lbt}. The last one is due to Stapledon \cite{stapledon0}. 

By building on a remarkable connection with additive number theory, Stapledon \cite{stapledon} was able to improve upon all these inequalities. He produced an infinite list of (linear) inequalities that $h^*$-vectors must satisfy. For instance, he showed that for every polytope of dimension $d\geq 7$, the inequality $h_{d-1}+2h_{d-2}+h_{d-3}\leq h_2+2h_3+h_4$ holds (this cannot be derived by combining the previously known inequalities). Since the detailed description of these inequalities would require a substantial interleave about additive number theory, we just refer to \cite{stapledon}.

\subsection{Warm-up examples}

Throughout the remainder of the article we will often need to construct examples of polytopes that behave pathologically in terms of certain Ehrhart theoretic properties. The following are some basic examples that we will often combine to produce our desired constructions. In spite of their simplicity, they are of interest in their own right.

\begin{example}\label{ex:reeve-tetrahedra}
    Reeve tetrahedra. Consider the polytope in $\mathbb{R}^3$ having vertices $(0,0,0)$, $(1,0,0)$, $(0,1,0)$ and $(1,q,q+1)$ for $q\in \mathbb{Z}_{\geq 0}$. This polytope will be denoted by $\mathcal{R}_q$. Its Ehrhart polynomial and $h^*$-polynomial are given by:
        \begin{align*} 
        E_{\mathcal{R}_q}(x) &= \tfrac{q+1}{6} x^3 + x^2 + \tfrac{11-q}{6} x + 1\\
        h^*_{\mathcal{R}_q}(x) &= qx^2 + 1. 
        \end{align*}
    If $q=12$, then the Ehrhart polynomial has a negative coefficient. This is a smallest example having such property. 
    Moreover, if $q=34$, then the Ehrhart polynomial is real-rooted yet two of the roots are positive rational numbers. 
\end{example}

\begin{example}\label{ex:reeve-simplices}
    Generalized Reeve simplices. Let $d=2k-1$ for $k\geq 2$, and consider the $d$-dimensional simplex in $\mathbb{R}^{d}$ defined as 
    \[\mathscr{R}_{q,k}:=\mathrm{conv}\{0, e_1, \ldots, e_{d-1}, (1,\ldots,1,q,\ldots,q,q+1)\} \subseteq \mathbb{R}^{d},\] 
    where in the last vertex there are $k-1$ coordinates equal to $1$ followed by $k-1$ coordinates equal to  $q$. When $k=2$ these coincide with the Reeve tetrahedra, i.e., $\mathscr{R}_{q,2}=\mathscr{R}_q$ for every $q$. We have
    \[ h_{\mathscr{R}_{q,k}}^*(x) = q x^k+1.\]
    (for a proof of this fact, we refer the reader to \cite{batyrev-hofscheier}).
\end{example}

\begin{remark}
    If one replaces the fourth vertex in the definition of a Reeve tetrahedron by $(1,p,q+1)$, where $p$ is a positive integer coprime with $q+1$, then the resulting family of polytopes characterizes $3$-dimensional \emph{empty} simplices\footnote{An empty simplex is a simplex which contains no lattice point except for its vertices.}. This is known as \textit{White's theorem} \cite{white64}. Note that a $d$-dimensional simplex having $h^*$-polynomial $h_0+h_1x+\cdots+h_dx^d$ is empty if and only if $h_1=h_d=0$. In particular, the $h^*$-polynomials of $3$-dimensional empty simplices are all of the form $1+qx^2$. Although it is quite hard to characterize higher-dimensional empty simplices, a characterization of $d$-dimensional empty simplices with $d$ odd whose $h^*$-polynomials look like $1+q\,x^{\frac{d+1}{2}}$ is provided by Batyrev and Hofscheier, see \cite{batyrev-hofscheier}. 
\end{remark}

\begin{example}\label{ex:reflexive-simplices}
    The standard reflexive simplex of dimension $d$. This is defined by 
    \[\mathrm{conv}(e_1,\ldots,e_d,-(e_1+\cdots+e_d)).\]
    The $h^*$-polynomial is $x^d+\cdots+x+1$. 
    This is log-concave, but is not $\gamma$-positive. Moreover, this polytope has a regular unimodular triangulation. In fact, it can be obtained by lifting the origin (which is the only lattice point except for the vertices) and leave the vertices as they are. One can easily check that this triangulation is not flag, and hence not quadratic.
\end{example}

In general, for a polytope $P\subseteq \mathbb{R}^n$ with vertices $v_1,\ldots,v_s$ the \emph{pyramid over $P$} is the polytope $\mathrm{Pyr}(P)\subseteq\mathbb{R}^{n+1}$ having as vertices $(v_1,0),\ldots,(v_s,0)$ and $e_{n+1}$. The operation of taking pyramids does not change the $h^*$-polynomial of the polytope, i.e., $h^*_P(x) = h^*_{\mathrm{Pyr}(P)}(x)$ for every polytope $P$ (see \cite[Theorem~2.4]{beck-robins}). At the level of Ehrhart polynomials, the following identity holds for every positive integer $m$:
    \[ E_{\mathrm{Pyr}(P)}(m) = \sum_{j=0}^m E_P(j).\]

\begin{example}\label{ex:pyramid-hypercube}
    The pyramid over a hypercube (cf. \cite[Theorem~2.5]{beck-robins}). Let us denote $P_d=\mathrm{Pyr}([0,1]^d)$. The Ehrhart polynomial has the property that:
    \[ E_{P_d}(m) = \sum_{j=0}^m (j+1)^d\]
    for every positive integer $m$. The polynomial appearing on the right-hand side corresponds to the $d$-th \emph{Faulhaber polynomial} evaluated at $m+1$. 
\end{example}

\section{Inequalities on \texorpdfstring{$h^*$}{h*}-polynomials}\label{sec:three}

The purpose of this section is to summarize the state of the art regarding certain conjectures (and known results) that relate a normality/integrality property of the polytope with inequalities for the entries of its $h^*$-vector. We will also include counterexamples to reasonable conjectures regarding the shape of the $h^*$-vector.

The guiding assertion is a conjecture posed by Stanley in a very influential survey written more than three decades ago.

\begin{restatable}[{\cite[Conjecture~4(a)]{stanley-conj}}]{conjecture}{stanleyconj}
    Let $R = \bigoplus_{i \geq 0} R^i$ be a graded noetherian Cohen-Macaulay domain, defined over a field $\mathbb{F}$. Let us denote $H_R(i):= \dim_{\mathbb{F}} R^i$, the Hilbert function of $R$. If $R$ is generated in degree $1$ and has Krull dimension $d$, then the $h$-polynomial, defined as the unique polynomial $h(x)$ satisfying:
        \[ \sum_{i \geq 0} H_R(i) \, x^i = \frac{h(x)}{(1-x)^d}\]
    has log-concave coefficients.
\end{restatable}

As it was stated, the above conjecture was disproved by Niesi and Robbiano in \cite[Example~2.3]{niesi-robbiano}, where they showed that the $h$-vector $(1,3,5,4,4,1)$ is achievable, and clearly fails to be log-concave. A decade ago, in \cite[Example~2.2]{li-yong} Li and Yong showed that the smaller $h$-vector $(1,2,1,1)$ is also feasible. However, a weaker version of the conjecture, which would amount to asserting the \emph{unimodality} of the $h$-vector remains open (it was explicitly conjectured by Brenti in \cite[Conjecture~5.1]{brenti-update}). Within the realm of Ehrhart polynomials of lattice polytopes, this version of the conjecture, often attributed to Stanley as well, reads as follows.

\begin{restatable}{conjecture}{idpunimodal}\label{conj:idp-implies-unimodal}
    Every lattice polytope satisfying the integer decomposition property (IDP) has a unimodal $h^*$-vector.
\end{restatable}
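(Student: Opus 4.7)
The statement is a long-standing open problem, so rather than pretending otherwise I will sketch the standard algebraic strategy and indicate precisely where the obstruction sits.

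The first step is to translate the problem to commutative algebra. Let $\mathbb{F}$ be an infinite field and consider the Ehrhart semigroup algebra $R := \mathbb{F}[\operatorname{cone}(P \times \{1\}) \cap \mathbb{Z}^{n+1}]$, graded by the last coordinate. Because $P$ is IDP, $R$ is generated over $\mathbb{F}$ in degree $1$ by the monomials corresponding to the lattice points of $P$, so $R$ is a standard graded $\mathbb{F}$-algebra whose Hilbert series equals the Ehrhart series of $P$. As a normal affine semigroup ring, Hochster's theorem gives that $R$ is Cohen-Macaulay, and its $h$-polynomial is exactly $h^*_P(x)$. The problem therefore falls squarely within the framework of Stanley's original conjecture, with the additional geometric input that $R$ is a normal toric ring.

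The second step is to pick a homogeneous linear system of parameters $\theta_1, \ldots, \theta_{d+1} \in R_1$ and pass to the Artinian reduction $A := R/(\theta_1, \ldots, \theta_{d+1})$, whose Hilbert function is the vector $(h_0^*, h_1^*, \ldots, h_d^*)$. Unimodality of $h^*_P$ thus reduces to unimodality of the Hilbert function of an Artinian standard graded algebra. The canonical route is to establish a \emph{weak Lefschetz property}: produce $\ell \in A_1$ such that every multiplication map $\times \ell : A_i \to A_{i+1}$ has maximal rank. Weak Lefschetz implies unimodality at once, and in the Gorenstein case it is equivalent to symmetry plus unimodality via the Poincar\'e pairing on $A$.

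The main obstacle is to produce such a Lefschetz element in the full generality of IDP polytopes. When $P$ is Gorenstein, Adiprasito-Papadakis-Petrotou-Steinmeyer obtain the strong Lefschetz property in arbitrary characteristic using generic Artinian reductions together with a squarefree/anisotropy argument, and this is what validates the Gorenstein case cited in the introduction. Without the Gorenstein assumption, the Artinian reduction $A$ no longer carries a nondegenerate top pairing, the socle can be spread across several degrees, and the entire Lefschetz apparatus loses its duality backbone; it is precisely at this point where all known general techniques stall. Strengthening the hypothesis to the existence of a regular unimodular or quadratic triangulation furnishes a squarefree initial ideal and Koszulness, but none of these is known to force a Lefschetz element on a non-Gorenstein toric ring.

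A more realistic intermediate plan is a reduction strategy: given an IDP polytope $P$, try to embed or cover its $h^*$-information by that of an auxiliary Gorenstein IDP polytope $\widetilde{P}$ (for instance a suitable free sum with a reflexive simplex, a Gorenstein cover, or a dilation-plus-pyramid construction combined with Proposition~\ref{prop:products-ok}), and then appeal to the Gorenstein case. The true bottleneck of this route is that every such construction known to the author either loses positivity of coefficients, fails to preserve IDP, or distorts the $h^*$-vector in a way that unimodality of $\widetilde P$ does not descend to $P$; the many pathological $h^*$-vectors surveyed in the introduction indicate that no naive combinatorial reduction can work uniformly, and this is where I would expect the real difficulty to lie.
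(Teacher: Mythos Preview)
The statement you were asked to address is a \emph{conjecture}, not a theorem: the paper does not prove it and explicitly presents it as a long-standing open problem attributed to Stanley (as a specialization of Brenti's Conjecture~5.1). There is therefore no ``paper's own proof'' to compare against.

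Your proposal correctly identifies this and gives an accurate account of the standard algebraic translation (Ehrhart ring, Cohen--Macaulayness via Hochster, Artinian reduction, weak Lefschetz) and of precisely where the argument breaks in the non-Gorenstein case. Your summary of the Adiprasito--Papadakis--Petrotou--Steinmeyer result and of the obstacles to extending it is consistent with the paper's own discussion surrounding Theorems~\ref{thm:adiprasito-steinmeyer} and the Gorenstein IDP unimodality theorem. The paper additionally records partial unconditional progress you do not mention---strong bottom-heaviness and second-half unimodality for all IDP polytopes (Theorem~\ref{thm:adiprasito-steinmeyer}), and the elementary inequality $h_1^2\geq h_0h_2$ (Proposition~\ref{prop:idp-log-conc-three-terms})---but these are partial results toward the conjecture, not a proof of it.

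In short: there is no gap to name, because there is no proof; your write-up is an honest and well-informed status report rather than a proof attempt, which is the appropriate response here.
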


In what follows we will analyze several variants of this conjecture by varying either the assumption of IDP-ness and/or the conclusion of unimodality. While preparing this section, we realized the importance of shedding more light on the situation regarding certain basic operations on polytopes, in particular Cartesian products, and how they interact with the inequalities for $h^*$-polynomials. 

\subsection{The interplay with Cartesian products}

We will aim to have a better understanding of the behavior of $h^*$-polynomials when a polytope $P$ falls into each of the categories of polytopes in Theorem~\ref{thm:hierarchy}. As Proposition~\ref{prop:products-ok} guarantees that the properties in our hierarchy are preserved under taking products, it is natural to ask whether unimodality, log-concavity, and real-rootedness of $h^*$-polynomials are properties that are preserved under taking products as well. For $h^*$-real-rootedness, we know that the answer is affirmative.

\begin{theorem}[\cite{wagner}]\label{thm:wagner}
    Let $P$ and $Q$ be lattice polytopes. If $h^*_P(x)$ and $h^*_Q(x)$ are both real-rooted polynomials, then $h^*_{P\times Q}(x)$ is real-rooted too.
\end{theorem}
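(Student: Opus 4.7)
The starting point is the simple identity $E_{P\times Q}(m) = E_P(m)\cdot E_Q(m)$ for every $m\in\mathbb{Z}_{\geq 0}$, which holds because $m(P\times Q) = mP \times mQ$, so lattice points in the product are exactly pairs of lattice points of the factors. This reduces the problem to understanding how the Ehrhart series interact under \emph{coefficient-wise} (Hadamard) products of sequences: if $\sum_m E_P(m)x^m = \frac{h^*_P(x)}{(1-x)^{d+1}}$ and $\sum_m E_Q(m)x^m = \frac{h^*_Q(x)}{(1-x)^{e+1}}$, then one must analyze the numerator of
\[
\sum_{m\geq 0} E_P(m)\,E_Q(m)\,x^m \;=\; \frac{h^*_{P\times Q}(x)}{(1-x)^{d+e+1}}.
\]

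The bridge I would invoke is the dictionary, due to Aissen-Schoenberg-Whitney and Edrei, between real-rootedness of the numerator and the P\'olya frequency (PF) property of the coefficient sequence. Concretely, a non-negative sequence $(a_m)_{m\geq 0}$ is PF if and only if its generating function factors as
\[
\sum_{m\geq 0} a_m\,x^m \;=\; C\,\frac{\prod_i (1+\alpha_i x)}{\prod_j (1-\beta_j x)}, \qquad \alpha_i,\beta_j \geq 0.
\]
Applied to the Ehrhart series of $P$ (whose denominator in the chosen form is $(1-x)^{d+1}$, and whose numerator has non-negative integer coefficients by Stanley's theorem), this characterization yields that $(E_P(m))_{m\geq 0}$ is PF if and only if $h^*_P(x)$ factors as $\prod_i (1+\alpha_i x)$ with $\alpha_i \geq 0$, if and only if $h^*_P(x)$ is real-rooted. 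The analogous statement holds for $Q$.

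The final step is to invoke the fact that the class of PF sequences is closed under coefficient-wise products: if $(a_m)$ and $(b_m)$ are PF, then so is $(a_m b_m)$. Combined with the identity $E_{P\times Q}(m) = E_P(m) E_Q(m)$, this immediately gives that $(E_{P\times Q}(m))_{m\geq 0}$ is PF, whence $h^*_{P\times Q}(x)$ is real-rooted by the characterization above. The main obstacle is precisely this closure property: the Hadamard product of general totally positive matrices need not be totally positive, so the argument must exploit the special Toeplitz structure underlying the PF definition. Verifying this closure --- equivalently, showing that the class of Edrei-type rational generating functions is stable under Hadamard product of Taylor coefficients --- is the technical heart of Wagner's approach.
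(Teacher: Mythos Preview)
Your proposal is correct and essentially follows the paper's approach: both reduce the statement to Wagner's theorem via the identity $E_{P\times Q}(m)=E_P(m)\,E_Q(m)$. The paper phrases Wagner's result directly in terms of the operator $\mathscr{W}$ (if $\mathscr{W}f$ and $\mathscr{W}g$ are negative real-rooted then so is $\mathscr{W}(fg)$), whereas you route through the equivalent P\'olya-frequency/Edrei characterization of the coefficient sequences, but the content is the same.
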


Although innocent-looking, the subtlety of this theorem can hardly be overestimated. It can be obtained as a consequence of a slightly more general result of Wagner \cite[Theorem~0.2]{wagner}, that we shall describe now. In his paper, Wagner defines a (non-linear) operator $\mathcal{W}:\mathbb{R}[x]\to \mathbb{R}[x]$. For a polynomial $f(x)\in \mathbb{R}[x]$, one defines $(\mathcal{W}f)(x)$ as the unique polynomial such that $\deg \mathcal{W}f \leq \deg f$ and:
    \begin{equation}\label{eq:wagner}
    \sum_{m=0}^{\infty} f(m)\, x^m = \frac{(\mathcal{W}f)(x)}{(1-x)^{1+\deg f}}.
    \end{equation}
In Ehrhart theory this is nothing but the map that assigns to an Ehrhart polynomial its corresponding $h^*$-polynomial. Wagner proves that if $\mathscr{W}f$ and $\mathscr{W}g$ are polynomials all of whose roots are negative real numbers, then $\mathscr{W}(fg)$ is negative real-rooted as well.

Turning back our attention to the Ehrhart theory world, it is natural to formulate openly the following questions.\footnote{Part (b) of Question~\ref{question:prods} was answered negatively by Balletti in \cite{balletti-counterexample} a few months after the present article was posted on the arXiv. His example involves a $52$-dimensional polytope.}

\begin{restatable}{question}{products}\label{question:prods}
    Let $P$ and $Q$ be two lattice polytopes.
    \begin{enumerate}[(a)]
        \item If $h^*_P(x)$ and $h^*_Q(x)$ are log-concave, is necessarily $h^*_{P\times Q}(x)$ log-concave too?
        \item If $h^*_P(x)$ and $h^*_Q(x)$ are unimodal, is necessarily $h^*_{P\times Q}(x)$ unimodal too?
    \end{enumerate}
\end{restatable}

We encourage the reader not to underestimate the difficulty of these two challenges. We believe that the answer to the first question might be positive, but the answer to the second is likely negative.  

Our positive belief in the first is that, by making use of the machinery of Lorentzian polynomials of Br\"and\'en and Huh \cite{branden-huh}, it is possible to show that the corresponding statement for ``ultra log-concavity'' is true. The proof of this result was found by Petter Br\"and\'en, and appears in \cite{branden-ferroni-jochemko}. Let us mention that the proof is more delicate than that of Theorem~\ref{thm:wagner}, and can also be adapted to give an alternative proof of it. 

The belief against the second comes from the following reasoning. By paraphrasing the statement of Theorem~\ref{thm:wagner}, one could hope that if $\mathscr{W}f$ and $\mathscr{W}g$ are log-concave (resp. unimodal) then $\mathscr{W}(fg)$ should be log-concave (resp. unimodal) as well. However, the following example answers negatively this broader version of the question in the case of unimodality.

\begin{example}
    Consider the polynomials
        \begin{align*} 
        f(x)&=\tfrac{11}{120} \, x^{5} - \tfrac{7}{24} \, x^{4} + \tfrac{67}{24} \, x^{3} - \tfrac{5}{24} \, x^{2} + \tfrac{217}{60} \, x + 1,\\
        g(x) &= \tfrac{11}{120} \, x^{5} - \tfrac{1}{4} \, x^{4} + \tfrac{61}{24} \, x^{3} + \tfrac{1}{4} \, x^{2} + \tfrac{101}{30} \, x + 1.
        \end{align*}
    We have, correspondingly, $(\mathcal{W}f)(x) = 1+x+x^2+x^3+x^4+6x^5$ and $(\mathcal{W}g)(x) = 1+x+x^2+x^3+2x^4+5x^5$. We see that both $\mathcal{W}f$ and $\mathcal{W}g$ are unimodal. However, we have 
    \begin{align*}
        \mathcal{W}(fg)(x) &= 1+ 38\,x+ 300\,x^2+ 962\,x^3+ 2059\,x^4+ \\
        &\qquad 7442\,x^5+ \boxed{7194}\,x^6+ 7292\,x^7+ 4320\,x^8+ 854\,x^9+ 30\,x^{10},
    \end{align*}
    which fails to be unimodal. Observe, however, that neither $f(x)$ nor $g(x)$ are \emph{Ehrhart polynomials}, as their second coefficients are both negative. We have checked with a computer that this is \emph{the simplest} counterexample that one can construct, in the sense that it minimizes the sum of the coefficients of $\mathscr{W}f$ and $\mathscr{W}g$. This comment intends to give the reader a hint of the real difficulty of finding a counterexample for this question under the Ehrhart-polynomiality assumption. We thank Peter Taylor from MathOverflow for suggesting a slight variation of these examples.
\end{example}

\begin{remark}
    We take the opportunity to point out explicitly that the Minkowski sum of two $h^*$-real-rooted polytopes is not necessarily real-rooted. Let $P$ and $Q$ be the polytopes in $\mathbb{R}^4$ having as vertices the columns of the following two matrices respectively:
    \[
    \begin{bmatrix} 
    0 & 1 & 0 & 0 & 0\\
    0 & 0 & 1 & 0 & 0\\
    0 & 0 & 0 & 1 & 0\\
    0 & 0 & 0 & 0 & 2
    \end{bmatrix}\qquad\qquad
    \qquad\qquad
    \begin{bmatrix} 
    0 & -2\\
    0 & -2\\
    0 & -2\\
    0 & -2
    \end{bmatrix}
    \]
    We have that their $h^*$-polynomials are $h^*_P(x)=h^*_Q(x)=x+1$, and hence are real-rooted. On the other hand, we have 
    \[h^*_{P+Q}(x)=4\,x^4+20\,x^3+20\,x^2+13\,x+1,\]
    which has two non-real roots near $-0.4885\pm 0.69666i$.
\end{remark}

\subsection{Integrality properties and inequalities}

Throughout different attempts to prove or disprove Conjecture~\ref{conj:idp-implies-unimodal}, instead of addressing the various versions of statements asserting either unimodality, log-concavity, $\gamma$-positive, or real-rootedness, a recurring theme within the discrete geometry framework has been that of strengthening or weakening the IDP assumption, and studying the shape of the possible $h^*$-vectors that can arise. Throughout the remaining of this subsection we will collect negative and positive results.

\subsubsection{Don't get too optimistic}

By looking at the hierarchy of Theorem~\ref{thm:hierarchy}, it is clear that the most aspiring strengthening of Conjecture~\ref{conj:idp-implies-unimodal} would be asserting that spanning polytopes have a real-rooted $h^*$-polynomial. In what follows we will see that such statement, as well as other less ambitious strengthenings of Stanley's conjecture turn out to be false.

As a starter, we have that the $h^*$-unimodality may fail at the bottom of the hierarchy.

\begin{theorem}\label{spanning_nonunimodal}
    There exists a spanning polytope whose $h^*$-polynomial is not unimodal.
\end{theorem}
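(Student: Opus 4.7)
The plan is to exhibit a concrete spanning lattice polytope whose $h^*$-polynomial fails to be unimodal. Since the standard textbook examples of non-unimodal $h^*$-polynomials—most notably the generalized Reeve simplex $\mathscr{R}_{q,k}$ from Example~\ref{ex:reeve-simplices}, with $h^*_{\mathscr{R}_{q,k}}(x) = 1 + qx^k$—fail to be spanning, the challenge is to produce such an example within the spanning class.

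My strategy would be to take $\mathscr{R}_{q,k}$ for some $q \geq 1$ and $k \geq 2$, whose $h^*$-vector $(1, 0, \ldots, 0, q, 0, \ldots, 0)$ is manifestly not unimodal, and to promote it to a spanning polytope while retaining the same $h^*$-polynomial. The obstruction to $\mathscr{R}_{q,k}$ being spanning is the fact that its only lattice points are its vertices, which generate a proper sublattice of index $q+1$; so no amount of intrinsic relabeling will help. Instead, one passes to a higher-dimensional Cayley-type construction: form the join of $\mathscr{R}_{q,k}$ with a unimodular lattice simplex whose vertices supply the missing generators of the ambient lattice. The auxiliary simplex, being unimodular, has trivial $h^*$-polynomial, which should force the combined polytope's $h^*$-polynomial to agree with $1 + qx^k$.

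The key verification splits into two parts: (i) confirming that the join is indeed spanning, which amounts to checking that together with the lattice generated by the vertices of $\mathscr{R}_{q,k}$, the coordinates of the new vertices yield all of the ambient $\mathbb{Z}^n$; and (ii) confirming that the $h^*$-polynomial is preserved by the operation, which requires a careful inspection of the Ehrhart series of the join and an argument that the lattice points introduced at the new level contribute no additional $h^*$-coefficient mass. Part (i) is essentially a linear algebra bookkeeping exercise, but (ii) is where the main technical difficulty lies: one needs a clean Ehrhart-series identity for the Cayley join that isolates the contribution of the unimodular factor.

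Should a clean $h^*$-formula for the Cayley join prove elusive, a fallback plan is to extract an example directly from the literature: the catalogues of pathological spanning polytopes built in \cite{hofscheier-katthan-nill0} and \cite[Section~9]{balletti} were constructed precisely to separate the properties in the hierarchy of Theorem~\ref{thm:hierarchy}, and one can verify by direct inspection that among them there are spanning polytopes whose $h^*$-vector is not unimodal. Either route suffices to establish the existence asserted in the theorem.
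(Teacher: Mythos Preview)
Your primary approach cannot succeed, for a reason intrinsic to the target $h^*$-polynomial rather than to the particular construction. Any lattice polytope with $h^*$-polynomial $1 + qx^k$ (for $k \geq 2$, $q \geq 1$) has $h^*_1 = 0$, hence exactly $d+1$ lattice points, hence is an empty simplex; and an empty simplex is spanning precisely when the affine lattice generated by its vertices has index $1$, i.e.\ when its normalized volume $h^*(1) = q+1$ equals $1$. Equivalently, this is the ``no internal zeros'' inequality of Theorem~\ref{thm:spanning}: a spanning polytope satisfies $h^*_i \geq 1$ for $1 \leq i \leq \deg h^*$, which $1 + qx^k$ violates at $i=1$. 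So \emph{no} construction---join, Cayley, or otherwise---can promote $\mathscr{R}_{q,k}$ to a spanning polytope while preserving $h^* = 1 + qx^k$. Concretely, in a join the two factors sit in complementary affine directions; the unimodular factor's vertices span only the new coordinates and contribute nothing toward the missing generator in the old ones, so your step (i) fails.

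Your fallback plan is exactly what the paper does: it quotes the explicit $5$-dimensional simplex of \cite[Example~1.5]{hofscheier-katthan-nill0}, observes that its one interior lattice point $(2,2,2,2,3)$ together with the vertices generates $\mathbb{Z}^5$, and records the non-unimodal $h^*$-vector $(1,1,2,1,2,1)$. Note that this example, in accordance with the obstruction above, has all $h^*_i \geq 1$; the non-unimodality is of a subtler shape than the Reeve family provides.
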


\begin{proof}
    Consider the $5$-dimensional simplex whose vertices are given by the columns of the following matrix:
    \[ \begin{bmatrix} 
    0 & 1 & 0 & 0 & 0 & 5\\
    0 & 0 & 1 & 0 & 0 & 5\\
    0 & 0 & 0 & 1 & 0 & 5\\
    0 & 0 & 0 & 0 & 1 & 5\\
    0 & 0 & 0 & 0 & 0 & 8
    \end{bmatrix}.\]
    Its only interior lattice point is $(2,2,2,2,3)$. Hence, it is immediate to check that it is a spanning polytope. However, its $h^*$-polynomial is given by $h^*(x) = 1 + x + 2x^2 + x^3 + 2x^4 + x^5$ which is not unimodal.
    This example was found by Hofscheier, Katth\"an and Nill in \cite[Example~1.5]{hofscheier-katthan-nill0}.
\end{proof}

The preceding statement reveals that the behavior of the $h^*$-polynomials is \emph{a~priori} not so nice. If one looks at the stronger property of having log-concave $h^*$-polynomials, it turns out that it is possible to produce counterexamples within the class of very ample polytopes.

\begin{theorem}\label{thm:very-ample-not-log-concave}
    There exists a very ample polytope whose $h^*$-polynomial is not log-concave.
\end{theorem}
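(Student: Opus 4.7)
The plan is to exhibit a single explicit lattice polytope $P$ which is very ample and whose $h^*$-polynomial fails to be log-concave. The natural place to look is among very ample polytopes that are \emph{not} IDP, since polytopes with stronger integrality properties tend to have better-behaved $h^*$-polynomials; such candidates are constructed by Hofscheier--Katth\"an--Nill \cite{hofscheier-katthan-nill0} and Bruns \cite{bruns-quest}, and catalogued in \cite[Section~9]{balletti}. My first step would be to take one of these concrete polytopes, compute its Ehrhart polynomial by direct lattice point enumeration in the first few multiples $kP$ for $k = 1, \dots, \dim P$ (using a computer algebra package such as \texttt{Normaliz} if the coordinates get cumbersome), and then read off the $h^*$-vector via the change of basis $E_P(x) = \sum_j h_j \binom{x+d-j}{d}$. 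One then inspects whether either some middle $h^*_i$ vanishes while its neighbors do not (which, by the convention adopted in the introduction, already rules out log-concavity), or an inequality $h_i^2 \geq h_{i-1} h_{i+1}$ fails outright.

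If a single polytope from the known catalogues does not suffice, the second step is to use \Cref{prop:products-ok}(ii): cartesian products of very ample polytopes remain very ample. Thus, after choosing a very ample but non-IDP factor $P$, I would take a product $P \times Q$ with a judiciously chosen lattice polytope $Q$ (for instance a standard simplex, a hypercube, or a pyramid) so that the resulting $h^*$-polynomial —governed by the nonlinear Wagner operator $\mathcal{W}$ from \eqref{eq:wagner} applied to the product $E_P(x)\cdot E_Q(x)$— contains a downward kink. The reflexive simplex of \Cref{ex:reflexive-simplices}, whose $h^*$-polynomial is a plateau of ones, and small Reeve-type simplices, whose $h^*$-polynomials have internal zeros, are convenient building blocks to combine with $P$ in order to import an internal zero into the product's $h^*$-vector.

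The main obstacle is the tension between the rigidity of very ampleness and the shape control we need on $h^*$-polynomials: the operator $\mathcal{W}$ is highly nonlinear, and in addition Wagner's \Cref{thm:wagner} tells us that real-rootedness is actually preserved under products, so we \emph{cannot} hope to import non-log-concavity by multiplying very ample polytopes whose $h^*$-polynomials are too well behaved (for instance real-rooted, or log-concave in the strong ultra-log-concave sense). The engineering problem is therefore to pick both factors with enough pathology in their $h^*$-vectors —typically an internal zero or a pronounced coefficient dip, which forces us away from the IDP world— and the verification reduces to a finite explicit computation. Once one such pair is found, the verification is routine: count lattice points in a few multiples, apply the change of basis, and display the resulting $h^*$-vector with the log-concavity violation boxed.
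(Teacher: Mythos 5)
Your first step is exactly what the paper does: it lifts a concrete example straight from Balletti's catalogue \cite[Section~9]{balletti}. That example is a $3$-dimensional segmental fibration over the edge polytope of a $4$-cycle (very ampleness attributed to Bruns, see \cite[Example~17]{bogart-et-al}) with $h^*$-vector $(1,4,17)$, which fails log-concavity because $4^2=16<17=1\cdot 17$. Your second, product-based fallback is therefore unnecessary here, though \Cref{prop:products-ok}(ii) would indeed be the right tool if no single catalogued example sufficed.

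Two small points worth flagging. First, your "internal zero" detection mode can never fire for a very ample polytope: very ample implies spanning, and by \Cref{thm:spanning} spanning polytopes have $h_i\geq 1$ for all $1\leq i\leq s$. Any log-concavity failure for a very ample polytope must therefore be a genuine failure of $h_i^2\geq h_{i-1}h_{i+1}$ with all three entries positive, exactly as in the paper's example. Second, the proposal as written is a plan rather than a completed proof — the actual content of the theorem is exhibiting a concrete polytope and verifying its $h^*$-vector, which you sketch but do not carry out. With the polytope from \cite[Section~9]{balletti} pinned down and its $h^*$-vector computed, your argument becomes the paper's.
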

\begin{proof}
    The following example appears in \cite[Section~9]{balletti}. 
    Consider the $3$-dimensional polytope in $\mathbb{R}^3$ whose vertices are given by the columns of the following matrix:
    \[ \begin{bmatrix} 
    0 & 1 & 0 & 0 & 1 & 0 & 1 & 1 \\
    0 & 0 & 1 & 0 & 0 & 1 & 1 & 1 \\
    0 & 0 & 0 & 1 & 1 & 1 & 16 & 17
    \end{bmatrix}.\]
    This is a segmental fibration (as defined in \cite{beck-delgado-gubeladze-michalek}) over the edge polytope of a cycle on $4$ vertices. The very ampleness of these polytopes is also indicated in \cite[Example~17]{bogart-et-al}, where it is attributed to Bruns. The $h^*$-polynomial of this polytope is $17x^2+4x+1$, which fails to be log-concave.
\end{proof}

For an alternative description of another (higher dimensional) example of a very ample non-IDP polytope with a non-log-concave $h^*$-polynomial, we refer the reader to the Oberwolfach report \cite[p.~2697]{mini-workshop}. In \cite{lason-michalek} Laso\'n and Micha{\l}ek  constructed a family of polytopes generalizing these examples. 

The preceding counterexamples show that unimodality fails already at the level of spanning polytopes, whereas log-concavity fails at the level of very ample polytopes. It is not known whether these two statements can be improved. In other words, we do not know the answer to the following two questions:

\begin{restatable}{question}{veryample}\label{question-very-ample-log-concave}\leavevmode
    \begin{enumerate}[(a)]
        \item Does there exist a very ample polytope whose $h^*$-polynomial is not unimodal? 
        \item Does there exist an IDP polytope whose $h^*$-polynomial is not log-concave?
    \end{enumerate}
\end{restatable}

We mention that Micha{l}ek posed these two questions in \cite{mini-workshop}. Notice that if the answer to any of these two questions is ``no'', then Conjecture~\ref{conj:idp-implies-unimodal} follows immediately. 

The preceding discussion explains at which level of the hierarchy it makes sense to consider the properties of unimodality and log-concavity for $h^*$-polynomials. Regarding $h^*$-real-rootedness, the following example shows that none of the properties in the hierarchy is itself strong enough to guarantee that real-rootedness holds. 

\begin{theorem}\label{thm:quadratic-h-star-not-real-rooted}
    There exists a polytope $P$ having a quadratic triangulation whose $h^*$-polynomial is not real-rooted.
\end{theorem}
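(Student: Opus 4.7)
The plan is to exhibit the order polytope of a finite poset whose $P$-Eulerian polynomial has non-real roots. Recall that for a naturally labeled finite poset $P$ on $n$ elements, the order polytope
\[
\mathcal{O}(P) = \{x \in [0,1]^n : x_i \leq x_j \text{ whenever } i \leq_P j\} \subseteq \mathbb{R}^n
\]
is a compressed lattice polytope, and in particular admits the canonical regular unimodular flag triangulation whose maximal simplices are indexed by the linear extensions of $P$. This is a quadratic triangulation, placing $\mathcal{O}(P)$ in the strongest class of the hierarchy in \cref{thm:hierarchy}. Moreover, by a classical result of Stanley, the $h^*$-polynomial of $\mathcal{O}(P)$ equals the $P$-Eulerian polynomial
\[
W_P(x) = \sum_{\sigma \in \mathcal{L}(P)} x^{\operatorname{des}(\sigma)},
\]
where $\mathcal{L}(P)$ denotes the set of linear extensions of $P$ and $\operatorname{des}(\sigma)$ is the number of descents of $\sigma$.

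The Neggers--Stanley conjecture predicted that $W_P(x)$ is real-rooted for every finite poset $P$, which, if true, would have placed every order polytope in the $h^*$-real-rooted class. This conjecture was famously refuted: Stembridge produced a counterexample in the labeled setting, and subsequently Br\"and\'en exhibited a naturally labeled poset $P$ whose $W_P(x)$ has a pair of complex conjugate roots. Taking $P$ to be any such counterexample, the order polytope $\mathcal{O}(P)$ has a quadratic triangulation while $h^*_{\mathcal{O}(P)}(x) = W_P(x)$ fails to be real-rooted, which is precisely what is claimed.

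The main obstacle, if one wishes the example to be fully explicit rather than merely to invoke the disproof of Neggers--Stanley, is purely combinatorial: one must write down a specific poset of moderate size, enumerate its linear extensions, tabulate their descents, and verify numerically that the resulting polynomial has a complex conjugate pair of roots. Br\"and\'en's counterexample is already of manageable size for such a check, and since the theorem only asserts existence, invoking the disproof of Neggers--Stanley is by itself sufficient. An alternative would be to search directly among compressed $0/1$-polytopes of small dimension (for instance, among order polytopes of posets with at most a dozen elements) using a computer algebra system, but this yields no conceptual simplification over citing the existing literature.
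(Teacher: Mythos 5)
Your approach is essentially the paper's own: take the order polytope of a poset whose $P$-Eulerian polynomial has non-real roots, use Stanley's flag regular unimodular triangulation (so it is a quadratic triangulation), and recall that $h^*_{\mathcal{O}(P)}(x) = W_P(x)$ for naturally labeled $P$. The paper makes the example explicit by exhibiting Stembridge's $17$-element poset and the resulting degree-$8$ $h^*$-polynomial with roots near $-1.8588 \pm 0.1497i$; you instead invoke the existence of a counterexample to Neggers--Stanley, which is logically sufficient.

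The one substantive issue is that your attribution and chronology are reversed. Br\"and\'en (2004) was the first to disprove the Neggers--Stanley conjecture, and his counterexamples are for \emph{arbitrary} (non-natural) labelings; Stembridge (2007) subsequently constructed the counterexamples for \emph{naturally labeled} posets. Since the identity $h^*_{\mathcal{O}(P)}(x) = W_P(x)$ requires the natural labeling, your argument logically depends on Stembridge's result, not Br\"and\'en's. You correctly intuit that the needed naturally labeled counterexample exists, so the mathematics is fine, but as written the proof cites the wrong theorem for the key input. The paper in fact uses Stembridge's poset (Figure~\ref{fig:stembridge-poset}) and also needs this example again later for Theorem~\ref{thm:non-realrooted}, which is why it spells out the polynomial explicitly.
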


\begin{proof}
    Let us call $P$ the order polytope (see Section~\ref{sec:six}) of the poset depicted in Figure~\ref{fig:stembridge-poset}. The polytope $P$ possesses a quadratic triangulation: in fact, the regular unimodular triangulation for order polytopes described by Stanley in \cite[Section~5]{stanley-poset} is flag (alternatively, see \cite[Theorem~3.3]{haase-paffenholz-piechnik-santos}). Its $h^*$-polynomial is given by:
        \begin{align}\label{eq:Stembridge} h^*_P(x) = 3 \, x^{8} + 86 \, x^{7} + 658 \, x^{6} + 1946 \, x^{5} + 2534 \, x^{4} + 1420 \, x^{3} + 336 \, x^{2} + 32 \, x + 1,\end{align}
    which has two non-real roots near $-1.8588 \pm 0.1497i$.
    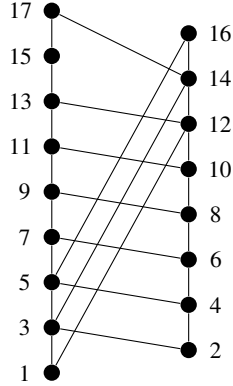
\begin{figure}[ht]
    \begin{tikzpicture}  
		[scale=0.6,auto=center,every node/.style={circle,scale=0.8, fill=black, inner sep=2.7pt}] 
	    \tikzstyle{edges} = [thick];
		\node[label=left:{$1$}] (a1) at (-1,-4) {};  
		\node[label=left:$3$] (a3) at (-1,-3)  {}; 
		\node[label=left:$5$] (a5) at (-1,-2)  {};  
		\node[label=left:$7$] (a7) at (-1,-1) {};  
		\node[label=left:$9$] (a9) at (-1,0)  {};
            \node[label=left:$11$] (a11) at (-1,1)  {};
		\node[label=left:$13$] (a13) at (-1,2)  {};
		\node[label=left:$15$] (a15) at (-1,3)  {};
            \node[label=left:$17$] (a17) at (-1,4)  {};
		
		\node[label=right:$2$] (a2) at (2,-3.5)  {};
		  \node[label=right:$4$] (a4) at (2,-2.5)  {};
		  \node[label=right:$6$] (a6) at (2,-1.5)  {};
		  \node[label=right:$8$] (a8) at (2,-0.5)  {};
		  \node[label=right:$10$] (a10) at (2,0.5)  {};
		  \node[label=right:$12$] (a12) at (2,1.5)  {};
		  \node[label=right:$14$] (a14) at (2,2.5)  {};
            \node[label=right:$16$] (a16) at (2,3.5)  {};

            \draw (a1) -- (a3);
            \draw (a1) -- (a12);
            \draw (a2) -- (a3);
            \draw (a2) -- (a4);
            \draw (a3) -- (a5);
            \draw (a3) -- (a14);
            \draw (a4) -- (a5);
            \draw (a4) -- (a6);
            \draw (a5) -- (a7);
            \draw (a5) -- (a16);
            \draw (a6) -- (a7);
            \draw (a6) -- (a8);
            \draw (a7) -- (a9);
            \draw (a8) -- (a9);
            \draw (a8) -- (a10);
            \draw (a9) -- (a11);
            \draw (a10) -- (a11);
            \draw (a10) -- (a12);
            \draw (a11) -- (a13);
            \draw (a12) -- (a13);
            \draw (a12) -- (a14);
            \draw (a13) -- (a15);
            \draw (a14) -- (a16);
            \draw (a14) -- (a17);
            \draw (a15) -- (a17);
		\end{tikzpicture} \caption{Stembridge's poset}\label{fig:stembridge-poset}
		\end{figure}
\end{proof}

The above proof has a story of its own. The poset of Figure~\ref{fig:stembridge-poset} was used by Stembridge \cite{stembridge} to disprove the Neggers--Stanley conjecture for naturally labelled posets (for arbitrary labellings it had been previously disproved by Br\"and\'en in \cite{branden-neggers}). Later we will see that we can actually strengthen the preceding result so to prove it under the assumption of reflexivity; since that counterexample relies on this one, we prefer to keep both of them.

\subsubsection{But don't lose hope}

On the positive side, the following result, due to Hofscheier, Katth\"{a}n and Nill provides a strong list of inequalities under the mild assumption of having a spanning polytope.

\begin{theorem}[\cite{hofscheier-katthan-nill1}]\label{thm:spanning}
    Let $P$ be a spanning polytope, then the coefficients of the $h^*$-polynomial satisfy the following inequalities:
    \begin{itemize}
        \item $h_1+\cdots + h_i \leq h_{1+j}+\cdots + h_{i+j}$ whenever $i+j\leq s-1$ (Stanley's inequalities)
        \item $h_i \geq 1$ for every $1\leq i\leq s$ (no internal zeros).
    \end{itemize}
\end{theorem}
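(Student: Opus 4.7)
The plan is commutative-algebraic. Let $\mathbb{F}$ be an infinite field, denote by $R := \mathbb{F}[P]$ the Ehrhart ring of $P$, i.e., the $\mathbb{Z}_{\geq 0}$-graded semigroup algebra of the lattice cone over $P$ with Hilbert series $h^*_P(t)/(1-t)^{d+1}$, and write $S \subseteq R$ for the $\mathbb{F}$-subalgebra generated by the degree-$1$ monomials $\{x^v t : v \in P \cap \mathbb{Z}^n\}$. By Hochster's theorem, $R$ is a Cohen-Macaulay normal domain of Krull dimension $d+1$; the spanning hypothesis $\mathrm{aff}_{\mathbb{Z}}(P \cap \mathbb{Z}^n) = \mathbb{Z}^n$ translates to $R$ and $S$ sharing the same field of fractions, with $R$ being the normalization of $S$.

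First I would select a homogeneous system of parameters $\theta_0,\ldots,\theta_d$ for $R$ consisting of elements of $S_1$. Spanning is crucial here: it guarantees that $S_1$ contains sufficiently many algebraically independent directions so that a generic choice inside $S_1$ yields a regular sequence of length $d+1$ on $R$. By Cohen-Macaulayness, the quotient $A := R/(\theta_0,\ldots,\theta_d)$ is a finite-dimensional graded $\mathbb{F}$-algebra with $\dim_{\mathbb{F}} A_i = h_i$. The central step is then to show that multiplication by a generic element $\eta \in S_1$ descends to maps $\eta^j : A_i \to A_{i+j}$ that are injective whenever $1 \leq i$ and $i+j \leq s-1$. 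Given this, both stated inequalities follow: taking $j=1$ and iterating gives $h_i \geq h_1 \geq 1$ for $1 \leq i \leq s$ (in the non-trivial case $s \geq 1$, forcing $|P \cap \mathbb{Z}^n| \geq d+2$); and summing the injective rank comparison $\dim A_i \leq \dim A_{i+j}$ over $1 \leq i \leq k$ yields Stanley's inequality $\sum_{i=1}^k h_i \leq \sum_{i=1}^k h_{i+j}$ when $k + j \leq s - 1$.

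The main obstacle is precisely the injectivity of multiplication by $\eta$ on the positive-degree part of $A$ up to degree $s$. In the IDP case one has $R = S$, and injectivity is immediate from Cohen-Macaulayness of $A$ combined with a standard uniform-position / generic linear form argument---this is essentially Stanley's original proof. For a merely spanning polytope, $R$ may strictly contain $S$, and the ``missing'' degree-$1$ elements of $R$ (the nonzero components of $R_1/S_1$) interact with the grading in subtle ways that can a priori break injectivity.

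The resolution, following Hofscheier-Katth\"an-Nill, is to decompose $R$ as a direct sum of rank-one graded $S$-modules indexed by a finite torsion quotient measuring how far $S$ is from being normal. Each such summand inherits Cohen-Macaulayness as an $S$-module, so a generic $\eta \in S_1$ acts injectively on its positive-degree part up to the relevant range. Because the decomposition respects the grading and no cross-summand cancellation can occur among the inequalities, the pointwise comparisons assemble into the global inequalities on $h^*_P$. The delicate technical point is to verify that the summand that carries the ``top'' contribution---the one responsible for degree $s$---has its non-vanishing range aligned with the overall degree $s$ of $h^*_P(x)$, which is where the spanning hypothesis is used in its sharpest form.
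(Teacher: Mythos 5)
Your setup (Ehrhart ring $R$, degree-one subalgebra $S$, Artinian reduction $A = R/(\theta_0,\ldots,\theta_d)$) is a natural one, but the central claim on which everything hinges is false, and in fact an example earlier in this very paper disproves it. You assert that multiplication by a generic $\eta \in S_1$ gives injective maps $\eta^j : A_i \to A_{i+j}$ whenever $1 \le i$ and $i+j \le s-1$. Taking $j=1$ over the allowed range this would yield the chain $h_1 \le h_2 \le \cdots \le h_{s-1}$. The spanning simplex of Theorem~\ref{spanning_nonunimodal} has $h^*$-vector $(1,1,2,1,2,1)$ with $s=5$, so $h_2 = 2 > 1 = h_3$; your claim with $i=2$, $j=1$ (allowed, since $i+j=3\le s-1=4$) would force $h_2 \le h_3$. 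So the injectivity is simply false under the hypotheses of the theorem, even though the theorem's conclusion (e.g.\ $h_1+h_2 \le h_2+h_3$, i.e.\ $3\le 3$) does hold for that example. The claim is also too strong in the IDP case: $h_1 \le \cdots \le h_{s-1}$ together with $h_0 \le h_1$ gives $h^*$-unimodality, which is exactly Conjecture~\ref{conj:idp-implies-unimodal} and remains wide open. In particular Stanley's original proof cannot proceed via such pointwise Lefschetz-type injectivity, and your sentence ``this is essentially Stanley's original proof'' is a misremembering. Stanley's inequalities concern partial sums: with $\tilde R := R/(\theta_1,\ldots,\theta_d)$ one has $\dim\tilde R_k = h_0+\cdots+h_k$, and the target inequality $h_1+\cdots+h_i \le h_{j+1}+\cdots+h_{i+j}$ is equivalent to $\dim\tilde R_i + \dim\tilde R_j \le \dim\tilde R_{i+j} + 1$, an aggregate statement about the multiplication map $\tilde R_i \otimes \tilde R_j \to \tilde R_{i+j}$, not an injectivity statement in each graded piece of the Artinian quotient $A$.

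The device you invoke to pass from IDP to spanning — decomposing $R$ as a direct sum of rank-one graded $S$-modules indexed by a finite torsion group — is also unsubstantiated: the normalization of an affine semigroup ring does not split in this way in general, and this is not how Hofscheier--Katth\"an--Nill argue. Even if such a splitting were available it would not help, because the summands would face the same injectivity obstruction exhibited above. The actual Hofscheier--Katth\"an--Nill argument exploits the spanning hypothesis combinatorially, producing height-$k$ lattice points in $\mathrm{cone}(P)$ for each $0\le k\le s$ with controlled interaction with the degree-one generators, and then runs the partial-sum comparison described above. If you want to repair your write-up, the step you must replace is the injectivity claim; a Lefschetz-type approach cannot give this theorem without proving strictly stronger, currently open, statements.
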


We mention that the first of the above two sets of inequalities, under the stronger assumptions of having an IDP polytope, had been proved by Stanley in \cite[Proposition~3.4]{stanley-idp-ineq}. In particular this shows that spanning polytopes satisfy $h_1\leq h_j$ for every $1\leq j\leq s-1$. The second set of inequalities asserts that $h^*$-vectors of spanning polytopes do not have ``internal zeros''; for a different proof of those inequalities we also point the reader to \cite{hofscheier-katthan-nill0}.

Beyond the preceding result, no general statements regarding inequalities for $h^*$-polynomials of very ample polytopes are known. However, for IDP polytopes the situation is substantially better, as we have the following important result whose proof was announced recently by Adiprasito, Papadakis, Petrotou and Steinmeyer \cite[Corollary~2.2]{adiprasito-papadakis-petrotou-steinmeyer}.

\begin{theorem}[\cite{adiprasito-papadakis-petrotou-steinmeyer}]\label{thm:adiprasito-steinmeyer}
    Let $P$ be an IDP polytope of dimension $d$, then the coefficients of the $h^*$-polynomial satisfy the following inequalities:
    \begin{itemize}
        \item $h_{d+1-i} \leq h_i$ for each $1\leq i\leq \lfloor\frac{d+1}{2}\rfloor$ (strong bottom-heaviness).
        \item $h_d \leq h_{d-1} \leq \cdots \leq h_{\lfloor\frac{d}{2}\rfloor}$ (second half unimodality).
    \end{itemize}
\end{theorem}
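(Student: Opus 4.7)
The plan is to interpret the $h^*$-vector as the Hilbert function of a standard graded Artinian $\mathbb{k}$-algebra, and then extract both inequalities from a Lefschetz-type property of that algebra. I would first pass to the affine sublattice generated by $P \cap \mathbb{Z}^n$ (which leaves the $h^*$-polynomial unchanged) and consider the Ehrhart semigroup algebra $R_P := \mathbb{k}[S_P]$, where $S_P$ is the semigroup of lattice points in the cone over $P \times \{1\}$, graded by height. Because $P$ is IDP, $R_P$ is generated in degree $1$; being a normal semigroup ring of Krull dimension $d+1$, it is Cohen--Macaulay by Hochster's theorem. After choosing a generic linear system of parameters $\theta_0, \ldots, \theta_d \in (R_P)_1$ (over a suitable extension of $\mathbb{k}$), the Artinian quotient $A := R_P/(\theta_0, \ldots, \theta_d)$ is a standard graded Artinian $\mathbb{k}$-algebra with $\dim_{\mathbb{k}} A_i = h_i$ and socle degree at most $d$.

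The two inequalities would then follow from surjectivity properties of multiplication by a generic linear form $\ell \in A_1$. For second-half unimodality I would aim to show that $\cdot \ell \colon A_i \to A_{i+1}$ is surjective for every $i \geq \lfloor d/2 \rfloor$, which immediately gives $h_{i+1} \leq h_i$ in that range. For strong bottom-heaviness I would aim to show that $\cdot \ell^{d+1-2i} \colon A_i \to A_{d+1-i}$ is surjective for every $1 \leq i \leq \lfloor (d+1)/2 \rfloor$, which immediately gives $h_{d+1-i} \leq h_i$. Both are ``upper-half'' Lefschetz surjectivities and, via a Poincaré-duality-style pairing on $A$, are dual to injectivities of the same multiplication maps on lower-degree pieces, which is the content of the Hard Lefschetz property for $A$.

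The main obstacle, of course, is proving such a Hard-Lefschetz-type property for $A$ in the generality of IDP polytopes. In the smooth projective toric case it is classical Hodge theory on the associated variety, but a general IDP polytope gives a possibly highly singular toric variety to which classical Hodge theory does not apply. The route taken by Adiprasito, Papadakis, Petrotou and Steinmeyer is through \emph{anisotropy}: working in positive characteristic, one proves that a certain Poincaré-type quadratic pairing on $A$, suitably twisted by squares of generic parameters, admits no nontrivial isotropic vectors. This anisotropy then upgrades to the Hard Lefschetz property via the McMullen--Adiprasito transfer technique, and lifts back to characteristic zero by semicontinuity. The genuinely novel difficulty is executing the anisotropy argument in the Ehrhart semigroup algebra setting, as opposed to the more familiar Stanley--Reisner setting of a simplicial complex, and this is where essentially all the real work lies.
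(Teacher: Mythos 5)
The paper does not prove this theorem; it states it as a direct citation to Adiprasito--Papadakis--Petrotou--Steinmeyer, followed only by a historical remark (Athanasiadis's earlier proof under the stronger hypothesis of a regular unimodular triangulation, and the role of the $g$-conjecture for spheres in the intermediate, arbitrary-unimodular-triangulation case). So there is no in-paper argument against which your steps can be matched line by line; what can be assessed is whether your sketch is a faithful outline of the cited proof.

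At the level of strategy it largely is: pass to the Ehrhart semigroup algebra $R_P$ (standard-graded precisely because $P$ is IDP, Cohen--Macaulay by Hochster), take a generic Artinian reduction $A$ with $\dim A_i = h_i$, and reduce the two inequalities to Lefschetz-type statements for multiplication by a generic linear form $\ell$, with the engine being anisotropy of a quadratic pairing in positive characteristic transferred to a Lefschetz property and then lifted to characteristic zero. That is indeed the shape of the APPS argument.

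There is, however, one step in your plan that would fail as written. You invoke ``a Poincaré-duality-style pairing on $A$'' to convert the desired surjectivities in the upper half into injectivities in the lower half. But for a general IDP polytope the ring $R_P$ is not Gorenstein, $A$ has no Poincaré self-pairing, and the $h^*$-vector need not be palindromic --- which is exactly why the theorem is forced to take the form of two asymmetric inequalities rather than a single Lefschetz isomorphism. The duality that is actually available is local duality: one pairs $A$ with the generic Artinian reduction $B$ of the canonical module $\omega_{R_P}$, for which $\dim_{\mathbb{F}} B_j = h_{d+1-j}$, via a perfect pairing $A_i \times B_{d+1-i} \to B_{d+1}$. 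Strong bottom-heaviness then corresponds to injectivity of $\cdot \ell^{\,d+1-2i}\colon B_i \to B_{d+1-i}$, and second-half unimodality to injectivity of $\cdot \ell\colon B_i \to B_{i+1}$ in low degree, which dualizes to the surjectivity on $A$ in high degree that you want. The anisotropy statement proved by APPS is likewise formulated for a pairing involving the canonical (interior) module rather than a self-pairing on $A$. As literally written, your argument would only cover Gorenstein IDP polytopes; the correction is to replace the Poincaré-duality step by this $A$--$B$ local-duality step throughout.
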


The progression of ideas behind the preceding statement deserves some comments. The same statement, under the stronger assumption of the existence of a regular unimodular triangulation, was proved by Athanasiadis in \cite[Theorem~1.3]{athanasiadis}. It is possible to adapt Athanasiadis' ideas to obtain the same result for polytopes having arbitrary unimodular triangulations, provided that one relies on the validity of the $g$-conjecture for spheres, which was proved by Adiprasito \cite{adiprasito} and Papadakis and Petrotou \cite{papadakis-petrotou}. The even greater upgrade to all IDP polytopes is undoubtedly an exciting development, and the applicability of the ideas behind the proof deserves further study.

Although it is not known whether IDP polytopes in general yield a unimodal or log-concave $h^*$-polynomial, we can prove by elementary means that the first three entries indeed satisfy log-concavity. 

\begin{proposition}\label{prop:idp-log-conc-three-terms}
    Every IDP polytope $P$ of dimension $d \geq 2$, satisfies $h_1^2\geq h_2 h_0$. 
\end{proposition}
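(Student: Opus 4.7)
The plan is to reduce the inequality to one involving only counts of lattice points in $P$ and $2P$, and then invoke IDP as a direct counting bound. Since $h_0 = 1$, the inequality $h_1^2 \geq h_2 h_0$ is equivalent to $h_1^2 \geq h_2$. I will use the geometric interpretation $h_1 = N - d - 1$, where $N := |P \cap \mathbb{Z}^n|$, together with the formula obtained by evaluating the change-of-basis identity $E_P(x)=\sum_{j=0}^d h_j\binom{x+d-j}{d}$ at $x=2$, namely
\[ E_P(2) = \binom{d+2}{2} + (d+1)\,h_1 + h_2.\]
So $h_2$ is fully determined once $E_P(2) = |2P \cap \mathbb{Z}^n|$ and $N$ are known.

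The central ingredient is the IDP-based upper bound: since every lattice point in $2P$ can be written as an unordered sum (with repetition) of two lattice points of $P$, I would deduce
\[ E_P(2) = |2P \cap \mathbb{Z}^n| \;\leq\; \binom{N+1}{2}.\]
Substituting this into the formula above and simplifying (by replacing $N$ by $h_1 + d + 1$) collapses the expression, and I expect the routine algebra to yield the clean bound
\[ h_2 \;\leq\; \binom{h_1+1}{2} \;=\; \frac{h_1(h_1+1)}{2}.\]

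To close the argument I would compare $\binom{h_1+1}{2}$ to $h_1^2$. Since $h_1\geq 0$ always, two cases arise: if $h_1\geq 1$, then $\frac{h_1(h_1+1)}{2} \leq h_1^2$; if $h_1 = 0$, the bound forces $h_2 \leq 0$, hence $h_2 = 0 = h_1^2$. In either case $h_1^2 \geq h_2$, which is what we want.

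I do not anticipate a genuine obstacle beyond the careful bookkeeping of the algebra; the heart of the argument is the elementary IDP counting inequality $E_P(2)\leq \binom{N+1}{2}$, which works precisely because IDP forces every point of $2P\cap\mathbb{Z}^n$ to be realized by some multiset of size two from $P\cap\mathbb{Z}^n$. It is worth observing as an aside that the intermediate inequality $h_2 \leq \binom{h_1+1}{2}$ is strictly stronger than what the lemma claims and reflects a Macaulay-type constraint on the $h^*$-vector of an IDP polytope.
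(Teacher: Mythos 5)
Your proof is correct and rests on exactly the same core ingredient as the paper: the IDP counting bound $E_P(2) \leq \binom{N+1}{2}$ combined with the change-of-basis formulas for $h_1$ and $h_2$ in terms of $E_P(1)$ and $E_P(2)$. Your packaging through the intermediate inequality $h_2 \leq \binom{h_1+1}{2}$ is in fact slightly cleaner than the paper's presentation, which first disposes of the empty-simplex case $E_P(1)=d+1$ separately by invoking the extra geometric fact that an IDP empty simplex is unimodular --- a step your argument renders unnecessary, since your bound immediately forces $h_2 = 0$ when $h_1 = 0$.
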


\begin{proof}
    First of all, we know that $E_P(1) \geq d+1$. 
    If $E_P(1)=d+1$, i.e., $P$ is an empty simplex, then the IDP-ness of $P$ implies that $P$ is a unimodular simplex, i.e., $h_0=1$ and $h_i=0$ for $i >0$. Hence, the inequality holds. 
    
    In what follows, let us assume that $E_P(1) \geq d+2$. 
    Since $P$ is IDP, every lattice point in $2P$ can be written as a sum of two lattice points in $P$. In particular, we have $\binom{E_P(1)+1}{2} \geq E_P(2)$. 
    Since $E_P(x)=\sum_{i=0}^d h_i^*\binom{x+d-i}{d}$, we can see that $h_0=1$, $h_1=E_P(1)-(d+1)$ and $h_2=E_P(2)-(d+1)E_P(1)+\binom{d+1}{2}$. Combining all these facts, 
    \begin{align*}
    h_1^2-h_2h_0 &=(E_P(1)-(d+1))^2-\left(E_P(2)-(d+1)E_P(1)+\binom{d+1}{2}\right) \\
    &=E_P(1)^2-E_P(2) - (d+1)E_P(1) + \frac{(d+1)(d+2)}{2} \\
    &\geq E_P(1)^2 - \binom{E_P(1)+1}{2} - (d+1)E_P(1)+\binom{d+2}{2} \\
    &=\frac{E_P(1)-(d+2)}{2}E_P(1)+\binom{d+2}{2} \\
    &\geq 0, 
    \end{align*}
    which proves the result.
\end{proof}

Finally, let us take a look at the top of the hierarchy. The following result asserts a list of non-linear inequalities that $h^*$-polynomials satisfy whenever the polytope has a quadratic triangulation. In particular, we can generalize the inequality $h_1^2\geq h_2$ of Proposition~\ref{prop:idp-log-conc-three-terms} to the set of inequalities $h_ih_j\geq h_{i+j}$. In fact, we can go further beyond by leveraging a result about the shape of Hilbert functions of Koszul algebras.

\begin{theorem}\label{thm:quadratic}
    If $P$ has a quadratic triangulation, then the $h^*$-vector $(h_0,\ldots,h_d)$ satisfies the non-linear inequalities described as follows. Consider the infinite matrix 
       \begin{equation}\label{eq:toeplitz-matrix}
       A = \begin{bmatrix} 
        h_0 & h_1   & h_2   & h_3 & h_4 & \cdots \\
        0 & h_0 & h_1   & h_2 & h_3 & \cdots \\
        0 & 0 & h_0 & h_1 & h_2 & \cdots \\
        0 & 0 & 0 & h_0 & h_1 &\cdots\\
        \vdots & \vdots & \vdots & \vdots & \ddots & \vdots\\
        \end{bmatrix}
        \end{equation}
    For each $s\geq 1$ and each sequence of positive integers $i_1,\ldots,i_s$ the $s\times s$ submatrix consisting of rows indexed $(0,i_1, i_1 + i_2, \ldots, i_1+i_2+\cdots+i_{s-1})$ and columns indexed by $(i_1, i_1+i_2, \ldots ,i_1+i_2+\cdots+i_s)$ has non-negative determinant (rows and columns are indexed starting from $0$). For example, when $s=2$, this says that for each $0\leq i,j\leq d$ with $i+j\leq d$ we have:
        \[ h_i h_j \geq h_{i+j}.\]
\end{theorem}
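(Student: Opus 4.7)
The plan is to translate the quadratic-triangulation hypothesis into Koszulness of the associated toric ring, and then to invoke a determinantal positivity statement for Hilbert functions of Koszul algebras. First, I would observe that a regular unimodular flag triangulation $\Delta$ of $P$ yields a squarefree quadratic initial ideal of the toric ideal $I_P$, which coincides with the Stanley--Reisner ideal of $\Delta$. Because $\Delta$ is flag, this ideal is generated in degree two, so by Fr\"oberg's criterion the Stanley--Reisner ring $k[\Delta]$ is Koszul; Koszulness then lifts to the toric ring $R := k[P]$ by flat degeneration from a quadratic initial ideal. The unimodular triangulation additionally forces $R$ to be Cohen--Macaulay of Krull dimension $d+1$ with Hilbert series $h^*_P(t)/(1-t)^{d+1}$, so after quotienting by a regular sequence of $d+1$ generic linear forms $\theta_1,\ldots,\theta_{d+1}$ I obtain an Artinian quotient $\bar{R} := R/(\theta_1,\ldots,\theta_{d+1})$ whose Hilbert function is exactly $(h_0, h_1, \ldots, h_d)$. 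Since Koszulness is preserved under quotienting by a regular sequence of linear forms, $\bar{R}$ is an Artinian Koszul $k$-algebra.

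Next, I would invoke the determinantal non-negativity statement for Hilbert functions of Koszul algebras. The key input is the identity $H_{\bar{R}}(t) \cdot P_k^{\bar{R}}(-t) = 1$, where $P_k^{\bar{R}}(t) = \sum_{n \geq 0} \dim_k \operatorname{Tor}_n^{\bar{R}}(k,k)\, t^n$ has non-negative coefficients by definition. Extracting the coefficient of $t^2$ recovers the Backelin--Fr\"oberg-style inequality $h_i h_j \geq h_{i+j}$, which is the $s=2$ case of the statement. For general $s \geq 3$, the prescribed $s \times s$ minor of \eqref{eq:toeplitz-matrix} can be expanded by a Jacobi--Trudi-type identity into a non-negative combination of bigraded Tor dimensions $\dim_k \operatorname{Tor}_n^{\bar{R}}(k,k)_j$, and its non-negativity would follow by the same formal mechanism applied to truncations of the bar construction.

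The hard part will be the transition from the $s=2$ inequality to the general determinantal minor. Whereas the $s=2$ case amounts to a single coefficient computation in $H_{\bar{R}}(-t)^{-1}$, the $s \times s$ minor demands that I identify it with a non-negative expression in bigraded Tor dimensions, leveraging the bigraded structure of the bar complex of a Koszul algebra. My two available routes are: either induct on $s$ by expanding along the bottom row and recombining lower-order minors with the $s=2$ inequality (which handles the $2 \times 2$ cofactor that pops out at each step), or cite directly the corresponding determinantal theorem from Polishchuk--Positselski's monograph on quadratic algebras, which treats precisely the Toeplitz minors of Koszul Hilbert series.
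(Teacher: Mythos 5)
Your proposal reaches the same conclusion via the same key input as the paper: Fr\"oberg's criterion gives Koszulness from flagness, Cohen--Macaulayness lets you pass to an Artinian Koszul quotient with Hilbert function $(h_0,\ldots,h_d)$ (Koszulness being preserved under quotienting by regular sequences of linear forms), and the Toeplitz-minor positivity is exactly the Polishchuk--Positselski theorem you cite as your fallback --- which is indeed what the paper cites. The one structural difference is that you detour through the toric ring $k[P]$: you present the Stanley--Reisner ring of $\mathcal{T}$ as an initial degeneration of the toric ideal, lift Koszulness and Cohen--Macaulayness to $k[P]$, and quotient there. The paper never leaves the Stanley--Reisner ring of $\mathcal{T}$: since $\mathcal{T}$ is a unimodular triangulation of $P$, the $h$-vector of $k[\mathcal{T}]$ already equals $h^*_P$, and $k[\mathcal{T}]$ is Cohen--Macaulay by Reisner's criterion because $\mathcal{T}$ triangulates a ball. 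Your detour is valid, but it invokes the regularity of $\mathcal{T}$ to set up the Gr\"obner degeneration, whereas the paper's argument would work verbatim for any flag unimodular triangulation, regular or not. Finally, a small imprecision worth flagging: extracting the coefficient of $t^2$ from $H_{\bar R}(t)\,P^{\bar R}_k(-t)=1$ yields only $h_1^2\geq h_2$; the full $s=2$ family $h_i h_j\geq h_{i+j}$ is itself a nontrivial instance of the Polishchuk--Positselski inequalities (the $(i,j)$-minor, not a single series coefficient), so that citation is doing all the work already at $s=2$, and your proposed Jacobi--Trudi-style induction for $s\geq 3$ would be superfluous once you invoke it.
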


\begin{proof}
    If $P$ has a quadratic triangulation $\mathcal{T}$, since $\mathcal{T}$ is unimodular, the $h$-vector of $\mathcal{T}$ coincides with the $h^*$-vector of $P$. On the other hand, since $\mathcal{T}$ is flag, by a result of Fr\"oberg \cite{froberg}, the Stanley--Reisner ring of $\mathcal{T}$ is Koszul. A standard fact about Koszul algebras (see Backelin and Fr\"oberg \cite[Theorem~4.e.iv]{backelin-froberg} or \cite[Theorem~4.15(iv)]{reiner-welker}), is that taking quotients by regular sequences of linear forms preserves Koszulness. Hence, given that the Stanley-Reisner ring of $\mathcal{T}$ is Cohen-Macaulay because $\mathcal{T}$ triangulates a polytope, after quotienting by a regular sequence of linear forms, we can produce a Koszul (Artininan) algebra whose Hilbert series matches with the $h$-polynomial of the Stanley-Reisner ring of $\mathcal{T}$, i.e., with $h^*_P(x)$. Since $h^*_P(x)$ is the Hilbert series of a Koszul algebra, we can rely on a deep result proved in \cite[Chapter~7, Theorem~2.1]{polishchuk-positselski} by Polishchuk and Positselski. It asserts the coefficients of the Hilbert series of any Koszul algebra are constrained to satisfy precisely the inequalities described by our statement.
\end{proof}

\begin{remark}
    The matrix in \eqref{eq:toeplitz-matrix} is called the \emph{Toeplitz matrix} of the polynomial $h^*_P(x)$. The above result tells us that \emph{some} of the minors of the Toeplitz matrix in equation~\eqref{eq:toeplitz-matrix} are non-negative. On the other hand, observe that
    \begin{itemize}
        \item $h^*_P(x)$ is log-concave if and only if \emph{all minors of size} $2\times 2$ are non-negative.
        \item $h^*_P(x)$ is real-rooted if and only if \emph{all minors of all sizes} are non-negative.
    \end{itemize}
    The first assertion can be verified directly. The second follows from a classical result in \cite{aissen-schoenberg-whitney} which relates the total positivity of Toeplitz matrices with the property of being a P\'olya frequency sequences (in our setting, this means a real-rooted polynomial). For more details about the connection between P\'olya frequency sequences and Koszul algebras, we refer the reader to \cite[Section~4]{reiner-welker}.
\end{remark}

\subsubsection{Combinatorial polytopes and their hierarchies}

Although the existence of quadratic triangulations is a particularly strong feature, many polytopes arising in combinatorics enjoy this property.

\begin{theorem}
    The following families of polytopes possess a quadratic triangulation:
    \begin{enumerate}[\normalfont(i)]
        \item Alcoved polytopes, in particular, 
        \begin{itemize}
            \item Order polytopes.
            \item Lipschitz polytopes. 
            \item Polypositroids.
            \item Positroids.
            \item Rank $2$ matroid polytopes. 
            \item Shard polytopes.
            \item Hypersimplices.
            \item Pitman--Stanley polytopes.
        \end{itemize}
        \item Edge polytopes 
        \begin{itemize}
            \item of complete multipartite graphs;
            \item that are simple polytopes;
            \item of bipartite graphs which do not contain chordless cycles of length at least six. 
        \end{itemize}
        \item Symmetric edge polytopes of trees and complete bipartite graphs.
    \end{enumerate}
\end{theorem}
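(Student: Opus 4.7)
The proof is essentially a catalog: each family either fits into a larger class whose quadratic triangulation is already known, or comes with a specific construction in the literature. The plan is to organize the argument around three hubs, one per item of the statement.

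For item (i), the core step is to invoke the theorem of Lam and Postnikov that every \emph{alcoved polytope} admits a regular unimodular flag triangulation induced by the affine Coxeter arrangement of type $A$; this is the triangulation by ``alcoves'' obtained by slicing with all hyperplanes $x_i - x_j = k$, $k \in \mathbb{Z}$. Granting this, the subfamilies are handled by exhibiting an alcoved presentation in each case. Order polytopes are alcoved directly from Stanley's inequalities $0 \le x_i \le 1$ and $x_i \le x_j$; Lipschitz polytopes are cut out by $x_i - x_j \le c_{ij}$ and thus alcoved by definition. Polypositroids are alcoved by the very definition of Lam--Postnikov, and positroid polytopes sit inside them; for rank $2$ matroid polytopes one checks that, after a change of coordinates, the matroid polytope can be written using only differences of coordinates. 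Hypersimplices are the classical example. For Pitman--Stanley polytopes and shard polytopes, I would cite the known alcoved descriptions from the corresponding papers (Benedetti--González D'León--Hanusa--Harris--Khare--Morales--Yip for Pitman--Stanley, and Padrol--Pilaud--Poullot or Bernardi--Reiner for shards).

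For item (ii), I would handle the three subcases separately with the Ohsugi--Hibi machinery on toric ideals of edge rings. A quadratic triangulation corresponds, via the Sturmfels dictionary, to a squarefree quadratic Gröbner basis of the toric ideal $I_G$; for edge polytopes of complete multipartite graphs this was proved by Ohsugi--Hibi, for the simple edge polytopes by Ohsugi--Hibi and Simis--Vasconcelos--Villarreal, and for bipartite graphs with no chordless cycle of length $\ge 6$ again by Ohsugi--Hibi. In each case the plan is simply to identify the relevant monomial order and quote the theorem.

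For item (iii), I would appeal to the result of Higashitani--Jochemko--Micha{\l}ek (for trees) and D'Ali--Juhnke-Kubitzke--Köhne--Venturello (for complete bipartite graphs) that the symmetric edge polytope admits a regular unimodular flag triangulation, again equivalent to the existence of a squarefree quadratic Gröbner basis of the associated toric ideal.

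The step that requires the most care is the verification that \textbf{every} polytope labelled ``alcoved'' in the list truly fits the Lam--Postnikov definition; several of these families (notably shard polytopes and polypositroids) are alcoved essentially by design, while others (rank $2$ matroid polytopes, Pitman--Stanley polytopes) require a genuine coordinate change. This is where I expect to spend the bulk of the exposition, while the edge polytope and symmetric edge polytope cases reduce to quoting the relevant Gröbner basis theorems together with the translation ``squarefree quadratic initial ideal $\Leftrightarrow$ quadratic triangulation''.
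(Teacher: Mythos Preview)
Your approach is essentially the same as the paper's: the theorem is stated without a self-contained proof and the reader is referred to Section~\ref{sec:six} (the miscellany), where the argument is organized exactly along your three hubs---alcoved polytopes as the umbrella for item~(i), the Ohsugi--Hibi Gr\"obner basis results for item~(ii), and specific references for the symmetric edge polytopes in item~(iii). The only notable discrepancies are attributional: the paper credits the quadratic triangulation of alcoved polytopes to Payne \cite{payne-koszul} rather than to Lam--Postnikov, cites \cite{matsui-hibi-higashitani-nagazawa-hibi} for simple edge polytopes, and handles rank~$2$ matroid polytopes and shard polytopes by noting they are (isomorphic to) positroids rather than by a direct coordinate check; a couple of your suggested citations (e.g., Padrol--Pilaud--Poullot for shards, Benedetti et al.\ for Pitman--Stanley) are not the ones the paper uses.
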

We refer to Section~\ref{sec:six} for an abridged account of the families mentioned in the preceding statement, as well as references in which the reader may find proofs. We mention explicitly that the above list (and other lists appearing later in this article) may be biased by the authors' own interests, we encourage the reader to not see them as exhaustive.

An outstanding question regarding the existence of quadratic triangulations was posed by Herzog and Hibi in \cite[p.~241]{herzog-hibi}. Paraphrasing them, we cannot resist the temptation of enunciating the following conjecture.  

\begin{restatable}{conjecture}{genperquadratic}\label{conj:gen-perm-quad-triang}
    If $P$ is an integral generalized permutohedron, then $P$ admits a quadratic triangulation. 
\end{restatable}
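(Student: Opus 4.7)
The plan is to produce, for each integral generalized permutohedron $P$, an explicit regular unimodular triangulation of $P$ and then to verify that it is flag. A natural starting point is Postnikov's decomposition of every integral generalized permutohedron as a signed Minkowski sum $P = \sum_{I} y_{I} \Delta_{I}$ of dilated standard simplices $\Delta_{I} = \mathrm{conv}\{e_{i} : i \in I\}$. I would first treat the sub-case of $\mathcal{Y}$-generalized permutohedra, where only non-negative coefficients $y_{I}$ appear: here Postnikov's nested set construction is already known to give regular unimodular triangulations, and the task reduces to describing the minimal non-faces of the nested set complex as non-compatible pairs inside the underlying building set, and checking that all such obstructions have size two.

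The second step would be to treat matroid polytopes, which fall outside the $\mathcal{Y}$-class. A natural candidate here is the pulling triangulation from a generic vertex order; its unimodularity should follow from a careful application of the symmetric basis exchange axiom, and one would then try to identify its minimal non-faces with pairs of bases obstructed by a specific matroid circuit. Polypositroids and, more generally, alcoved polytopes are already covered by the families listed above, so once matroid polytopes are in hand one has a useful library of resolved subfamilies to combine.

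The third step is a Cayley-trick combination. The Minkowski decomposition $P = \sum_{I} y_{I} \Delta_{I}$ corresponds to a mixed subdivision of a dilated simplex, and a coherent fine unimodular mixed subdivision with pairwise-generated incidence structure should lift to a quadratic triangulation of $P$. The parametric behaviour of such subdivisions across the type cone of generalized permutohedra should then allow one to interpolate from the extreme rays, where the building blocks are controlled, to arbitrary integer points, and the preservation of quadratic triangulations under cartesian products (Proposition~\ref{prop:products-ok}\,(vi)) gives one a reassuring base case for product-type decompositions.

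The main obstacle I expect is the matroid polytope step. Even the weaker assertion that every matroid polytope admits \emph{any} unimodular triangulation is a well-known open problem tied to White's conjecture on toric ideals of matroids, and flagness is a substantial further constraint because it forces the Stanley--Reisner ideal to be generated in degree two. Moreover, the Cayley-trick combination step is itself delicate: a Minkowski sum of two polytopes carrying quadratic triangulations need not inherit one in any obvious way, so one must simultaneously control regularity, unimodularity, and flagness of the lifted subdivision. A realistic route to the full conjecture may well require a genuinely new combinatorial device that produces the triangulation directly from the submodular function defining $P$, rather than assembling it from pieces.
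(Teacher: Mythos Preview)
The statement you are attempting to prove is explicitly posed in the paper as an \emph{open conjecture}; there is no proof in the paper to compare your proposal against. The paper presents it as a reformulation of a question of Herzog and Hibi on quadratic Gr\"obner bases for toric ideals of discrete polymatroids, notes its relation to White's conjecture, and leaves it open.

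Your proposal is not a proof but a research outline, and you acknowledge this yourself in the final paragraph. The central obstruction you identify is exactly the reason the conjecture is open: the matroid polytope case is essentially White's conjecture (in its strong form asking for a quadratic Gr\"obner basis), which is unresolved. Your suggestion that a pulling triangulation from a generic vertex order should be unimodular via symmetric basis exchange is optimistic; even the existence of \emph{some} unimodular triangulation of an arbitrary matroid polytope is not known, let alone a regular flag one.

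There is also a gap in what you present as the settled first step. For $\mathcal{Y}$-generalized permutohedra the paper records only that they admit \emph{regular unimodular} triangulations (via Higashitani--Ohsugi), not quadratic ones; the flagness you need is not established there, and the nested-set description you invoke does not automatically yield it for arbitrary Minkowski sums of simplices. Finally, your Cayley-trick combination step does not address signed Minkowski summands (the $y_I < 0$ case), which is precisely what separates general integral generalized permutohedra from the $\mathcal{Y}$-class; Minkowski \emph{differences} do not interact with triangulations in any controlled way, so the interpolation across the type cone you sketch has no mechanism behind it.
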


Although Herzog and Hibi study discrete polymatroids instead of generalized permutohedra, the relation between these two notions is well-understood. A collection of points $A\subseteq \mathbb{Z}^n_{\geq 0}$ is a \emph{discrete polymatroid} if and only if $A$ is the set of \emph{all} the lattice points in an (integral) generalized permutohedron lying in the closed positive orthant, see \cite[Section~9]{lam-postnikov-polypositroids}. Herzog and Hibi ask whether the toric ideal of a discrete polymatroid admits a quadratic Gr\"obner basis, and it is known that an affirmative answer is equivalent to the existence of a quadratic triangulation for the corresponding polytope, i.e., the validity of Conjecture~\ref{conj:gen-perm-quad-triang}. This is related to another hierarchy of famous conjectures posed by White in \cite{white}; among these, the weakest one posits that the toric ideal of a matroid is quadratically generated --- though, of course, the stronger property of having a quadratic Gr\"obner basis, and thus the existence of a quadratic triangulation for the matroid base polytope are still open problems. Though still open, some progress on White's conjectures has been achieved in \cite{lason-michalek-white,lason,hayase-hibi-katsuno-shibata}.

We note that in the hierarchy of Theorem~\ref{thm:hierarchy}, integral generalized permutohedra are known to be IDP (for a proof, see \cite[Section~3]{mcdiarmid}, or in the case of matroids \cite[Chapter~10]{michalek-sturmfels}). Furthermore, a recent breakthrough by Backman and Liu~\cite[Corollary~3.6]{backman-liu} proves the following.

\begin{theorem}[\cite{backman-liu}]
    If $P$ is an integral generalized permutohedron, then $P$ admits a regular unimodular triangulation.
\end{theorem}

The less restrictive property of having a regular unimodular triangulations occurs even more commonly within combinatorial polytopes. More so, it is conjectured to be true in a number of cases.

\begin{theorem}
    The following families of polytopes admit regular unimodular triangulations.
    \begin{enumerate}[\normalfont(i)]
        \item Alcoved polytopes of type B.
        \item Compressed polytopes (also known as $2$-level polytopes), in particular
        \begin{itemize}
            \item Order polytopes. 
            \item Birkhoff polytopes.
            \item Integral Gelfand-Tsetlin polytopes $P_{\lambda/\mu,\mathbf{1}}$ having weight $1$.
        \end{itemize}
        \item $\mathcal{Y}$-generalized permutohedra, in particular,
            \begin{itemize}
                \item Nestohedra, in particular graph associahedra.
                \item Fertilitopes.
                \item Partial permutohedra $\mathscr{P}(m,n)$ for $n\geq m-1$ 
                \item Pitman-Stanley polytopes.
            \end{itemize}
        \item Symmetric edge polytopes.
    \end{enumerate}
\end{theorem}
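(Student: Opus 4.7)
The plan is to treat the four items (i)--(iv) independently, invoking in each case an explicit construction from the literature that produces a regular unimodular triangulation, and then reducing each sub-item to its umbrella class. Since the statement is a compilation of known facts scattered across the literature, the core of the proof is bibliographic: matching each family to a triangulation-producing result and, where the identification is non-obvious, supplying a short verification.

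For item (i), I would use the alcove triangulation of the type-$B_n$ affine Weyl arrangement: every fundamental alcove has normalized volume one, and the triangulation is regular because it arises as a coherent subdivision from an explicit convex lift (analogous to Lam--Postnikov in type A). For item (ii), the key input is Sullivant's theorem that \emph{any} pulling triangulation of a compressed (equivalently, $2$-level) polytope from its vertices is unimodular; since pulling triangulations are automatically regular, this gives the result. The sub-items then reduce to verifying compressedness: order polytopes by Stanley's classical result, Birkhoff polytopes by direct inspection of the doubly stochastic slice of the cube (all facets are defined by $0/1$ inequalities), and integral Gelfand--Tsetlin polytopes $P_{\lambda/\mu,\mathbf{1}}$ of weight $\mathbf{1}$ by De Loera--McAllister.

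For item (iii), I would invoke Postnikov's systematic treatment of generalized permutohedra: a $\mathcal{Y}$-generalized permutohedron is a Minkowski sum $\sum_{I} y_I \Delta_I$ with $y_I \geq 0$, and Postnikov produces an explicit regular unimodular triangulation via Minkowski cells and the hypersimplex subdivision. Nestohedra (and in particular graph associahedra) are the standard case treated by Postnikov and Feichtner--Sturmfels; fertilitopes fit in via their identification with specific nestohedra; partial permutohedra $\mathscr{P}(m,n)$ for $n\geq m-1$ admit an explicit Minkowski decomposition of the form $\sum y_I \Delta_I$ with $y_I\geq 0$ in this range (this is the arithmetic content of the hypothesis $n\geq m-1$); and Pitman--Stanley polytopes are Minkowski sums of coordinate edges along a path, placing them within the nestohedra class. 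For item (iv), I appeal to the result of Higashitani--Jochemko--Micha{\l}ek producing a quadratic Gr\"obner basis for the toric ideal of every symmetric edge polytope; via Sturmfels' correspondence, the initial ideal encodes a regular unimodular triangulation (in fact flag, hence quadratic, which is needed for the stronger claim on the same family in the preceding theorem).

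The main obstacle is not mathematical depth but precise attribution. In particular, the two most recent entries---partial permutohedra $\mathscr{P}(m,n)$ with $n\geq m-1$, and fertilitopes---require unpacking an explicit Minkowski presentation with nonnegative $\mathcal{Y}$-coefficients, rather than merely quoting a theorem; the condition $n\geq m-1$ is precisely the threshold above which the natural decomposition has all $y_I\geq 0$, and below it one falls out of the $\mathcal{Y}$-generalized permutohedra class (which is why the theorem is stated with this restriction). Once each sub-item is correctly placed under its umbrella, the four ambient existence results close the argument.
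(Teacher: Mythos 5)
The paper itself gives no real proof here—it simply points to the Haase–Paffenholz–Piechnik–Santos survey and to Section~6 for the per-family references—so your task is essentially to get the attributions right. Most of your plan is on target (compressedness plus pulling triangulations for item (ii) is exactly the standard argument, since pulling triangulations are regular), but two points are genuinely off.

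First, for item (iv) you assert that Higashitani--Jochemko--Micha\l ek produce a \emph{quadratic} Gr\"obner basis for the toric ideal of every symmetric edge polytope, and conclude that the resulting triangulation is flag. This overshoots: the paper explicitly records (Section~6, and the earlier theorem on quadratic triangulations, which lists only trees and complete bipartite graphs) that symmetric edge polytopes do \emph{not} in general admit quadratic triangulations, with the $7$-cycle as a counterexample. What Higashitani--Jochemko--Micha\l ek actually supply is a regular unimodular triangulation (a squarefree initial ideal), which need not be flag. If their Gr\"obner basis were quadratic in general, the whole family would be flag, contradicting the $7$-cycle. So the parenthetical ``(in fact flag, hence quadratic)'' must be deleted; the correct conclusion is only ``regular unimodular,'' which is exactly what the present theorem claims.

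Second, for item (iii) you credit Postnikov's Minkowski-cell subdivision with producing a regular unimodular triangulation of $\mathcal{Y}$-generalized permutohedra. Postnikov's fine mixed subdivision has cells that are Minkowski sums of faces of simplices---these are generally not simplices, and even after further triangulating them there is no immediate reason the pieces are unimodular. The existence of a regular unimodular triangulation for $\mathcal{Y}$-generalized permutohedra is the content of a separate result of Higashitani--Ohsugi, which is the reference the paper relies on in Section~6. Your reduction of the sub-items (nestohedra, fertilitopes, partial permutohedra $\mathscr{P}(m,n)$ for $n\geq m-1$, Pitman--Stanley) to membership in the $\mathcal{Y}$-class is fine and matches the paper, but the umbrella result needs the correct input. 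With these two repairs the proposal lines up with what the paper intends.
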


We refer to \cite{haase-paffenholz-piechnik-santos} and to Section~\ref{sec:six} for more details about the preceding statement.

\subsection{The Gorenstein and reflexive case}

We now discuss the situation of Gorenstein and reflexive polytopes. In this case, we have a palindromic $h^*$-vector, which in addition allows us to consider the notion of $\gamma$-positivity.

\subsubsection{Negative results}

In \cite{hibi} Hibi had conjectured that all reflexive polytopes had unimodal $h^*$-polynomials. If one relaxes the requirement of reflexivity to the property of being Gorenstein, counterexamples were already known; for instance the Reeve tetrahedron (cf. Example~\ref{ex:reeve-tetrahedra}) for $q=1$ is already an example. In \cite{mustata-payne} Musta\c{t}\v{a} and Payne found a counterexample to Hibi's conjecture. After that, Payne \cite{payne} found additional such examples. We borrow \cite[Example~4.2]{payne} in order to formulate the following statement.

\begin{theorem}
    There is a spanning reflexive polytope of dimension $d$ with non-unimodal $h^*$-polynomial.
\end{theorem}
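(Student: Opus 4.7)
The plan is to import Payne's construction from \cite{payne} and verify the additional spanning property. In \cite[Example~4.2]{payne}, Payne produces, for each sufficiently large $d$, a reflexive polytope $P_d \subseteq \mathbb{R}^d$ whose $h^*$-polynomial is palindromic but non-unimodal; the construction comes from a specific Gorenstein toric Fano variety (morally a projective bundle over a product of projective spaces), so $P_d$ admits an explicit vertex and facet description by primitive integer vectors, and reflexivity is built into the definition.

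My plan is to proceed in three steps. First, I would recall the explicit coordinate description of Payne's $P_d$, together with the facet inequalities $\langle u_F, x \rangle \geq -1$ (with $u_F$ primitive in the dual lattice) that certify reflexivity; since the origin is the unique interior lattice point, this is immediate from the construction. Second, I would reproduce Payne's computation of $h^*_{P_d}(x)$: the $h^*$-polynomial factors, according to the bundle-like structure of the underlying toric variety, into a product of palindromic pieces, and a direct inspection reveals an index $i < d/2$ at which the $h^*$-vector strictly decreases before increasing again, so unimodality fails. Third, and this is the only point beyond Payne's original statement, I would verify that $P_d$ is spanning, namely that $\mathrm{aff}_{\mathbb{Z}}(P_d \cap \mathbb{Z}^d) = \mathbb{Z}^d$. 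This amounts to inspecting the vertex set: Payne's $P_d$ contains the standard basis vectors coming from the projective space factors, which already generate $\mathbb{Z}^d$ as an abelian group.

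The main obstacle is ensuring the spanning condition in step three, which is not automatic for arbitrary reflexive polytopes and is sensitive to the particular coordinate normalization of Payne's example. The contingency plan, if a given $P_d$ turns out to generate only a proper sublattice, is to replace $P_d$ by a Cayley-type construction $P_d \ast \Delta$ where $\Delta$ is a unimodular simplex in fresh coordinates: such a Cayley sum of reflexive polytopes of matching dimension is again reflexive, preserves palindromicity of the $h^*$-polynomial, and the product-style identity for $h^*$-polynomials of reflexive Cayley sums propagates the non-unimodal dip from the Payne factor while the unimodular simplex supplies the missing lattice generators. Either directly or after this modification, one obtains a spanning reflexive polytope of dimension $d$ with non-unimodal $h^*$-polynomial, as claimed.
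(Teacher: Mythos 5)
Your proposal takes essentially the same approach as the paper: the paper instantiates Payne's Example~4.2 with $(b,k,r)=(3,2,0)$, obtaining the $6$-dimensional reflexive simplex with vertex set $\{e_1,\ldots,e_6,(-1,-1,-1,-1,-1,-3)\}$ and $h^*$-polynomial $1+x+2x^2+x^3+2x^4+x^5+x^6$, and observes that the presence of the standard basis vectors among the vertices immediately gives the spanning property. Your contingency via Cayley sums is unnecessary (and its reflexivity claim would need care), since Payne's simplices already contain a full standard basis as vertices; the geometric gloss about a ``projective bundle over a product of projective spaces'' is also not quite what Example~4.2 is (those are simplices, hence fake weighted projective spaces), but this does not affect the argument.
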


\begin{proof}
    Though we have already constructed an example in Theorem \ref{spanning_nonunimodal}, we can also construct a similar example with a reflexive polytope. 
    In fact, let us consider the $6$-dimensional simplex whose vertices are given by the columns of the following matrix: 
    \[ \begin{bmatrix} 
    1 & 0 & 0 & 0 & 0 & 0 & -1\\
    0 & 1 & 0 & 0 & 0 & 0 & -1\\
    0 & 0 & 1 & 0 & 0 & 0 & -1\\
    0 & 0 & 0 & 1 & 0 & 0 & -1\\
    0 & 0 & 0 & 0 & 1 & 0 & -1\\
    0 & 0 & 0 & 0 & 0 & 1 & -3
    \end{bmatrix}.\]
    Then it is trivially spanning by the first six columns and has its $h^*$-polynomial given by $1+x+2x^2+x^3+2x^4+x^5+x^6$. 
\end{proof}

\begin{remark}
    In fact, Gorenstein polytopes can have \emph{highly non-unimodal} $h^*$-vectors. Put precisely, for each $k \geq 2$ and $\ell \geq 2$, set $d=k\ell-1$, and consider the $d$-dimensional polytope:
    \begin{align*}
        P:=\mathrm{conv}(\{{\bf 0}\} \cup \{e_i+\cdots+e_{i+k-1} : i=1,\ldots,k\ell-1\}) \subseteq \mathbb{R}^d,  
    \end{align*}
    where $e_{d+i}=e_i$ for each $i$. One can prove that: 
    \[h_P^*(x)=1+x^\ell+x^{2\ell}+\cdots+x^{(k-1)\ell}.\]
    For more about this construction we refer to \cite[Section~3]{higashitani}.
\end{remark}

As we will explain later, the existence of a quadratic triangulation for a Gorenstein polytope $P$ is expected to reflect into the $h^*$-polynomial being $\gamma$-positive (see Conjecture~\ref{conj:ehrhart-gal} below). As the following example shows, regular unimodular triangulations are not enough to guarantee $\gamma$-positivity.

\begin{theorem}\label{thm:reflexiv-unimod-triang-not-gamma-positive}
    There exists a reflexive polytope $P$ having a regular unimodular triangulation but whose $h^*$-polynomial is not $\gamma$-positive.
\end{theorem}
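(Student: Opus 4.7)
The plan is to exhibit the standard $d$-dimensional reflexive simplex $P = \mathrm{conv}(e_1,\ldots,e_d,-(e_1+\cdots+e_d))$ from Example~\ref{ex:reflexive-simplices} as the counterexample; already the case $d=2$ suffices. All three properties demanded by the statement---reflexivity, existence of a regular unimodular triangulation, and failure of $\gamma$-positivity---can be extracted almost directly from that example, so the only substantive verification is the last one.

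First I would recall, as in Example~\ref{ex:reflexive-simplices}, that $P$ is reflexive: the origin is its unique interior lattice point and the dual polytope has integer vertices. Next, I would record that $P$ admits a regular unimodular triangulation, namely the stellar subdivision at the origin: this decomposes $P$ into the $d+1$ simplices $\mathrm{conv}(F \cup \{0\})$ as $F$ ranges over the facets of $P$, each of normalized volume $1$, and it is regular because it arises from the lift that pushes the origin slightly below the hyperplane spanned by the vertices of $P$.

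The key step is then to show that the $h^*$-polynomial $h^*_P(x) = 1 + x + x^2 + \cdots + x^d$, also computed in Example~\ref{ex:reflexive-simplices}, fails to be $\gamma$-positive for every $d\geq 2$. Writing $h^*_P(x) = \sum_{j=0}^{\lfloor d/2\rfloor} \gamma_j\, x^j(x+1)^{d-2j}$ and comparing monomials, the coefficient of $x^d$ forces $\gamma_0 = 1$, and the coefficient of $x^{d-1}$ then forces $d\gamma_0 + \gamma_1 = 1$, whence $\gamma_1 = 1 - d \leq -1$. This exhibits a strictly negative entry in the $\gamma$-vector and completes the argument.

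The only conceivable obstacle is verifying that the stellar subdivision just described is genuinely regular and unimodular, but both claims are already granted by Example~\ref{ex:reflexive-simplices}. It is informative to note, in passing, why this example does not threaten the expectation that \emph{quadratic} triangulations should yield $\gamma$-positivity: the stellar triangulation above is not flag, since its unique minimal non-face is the set $\{e_1,\ldots,e_d,-(e_1+\cdots+e_d)\}$ of all $d+1$ vertices of $P$.
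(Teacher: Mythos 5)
Your proof is correct and takes the same approach as the paper: exhibit the standard reflexive simplex of Example~\ref{ex:reflexive-simplices} and observe that $h^*_P(x)=1+x+\cdots+x^d$ fails $\gamma$-positivity. The only difference is that you spell out the computation $\gamma_0=1$, $\gamma_1 = 1-d$, which the paper leaves to the reader.
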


\begin{proof}
    Consider the reflexive simplex of Example~\ref{ex:reflexive-simplices}. Its $h^*$-polynomial is $h_P^*(x) = 1+x+x^2+\cdots+x^d$. This polynomial is not $\gamma$-positive for $d\geq 2$.
\end{proof}

For general polynomials with positive coefficients, it is often asserted that $\gamma$-positivity and log-concavity are properties that do not compare to one another. Within the Ehrhart theory world, it is easier to construct polynomials that are log-concave but not $\gamma$-positive (for instance, having many coefficients equal to the same number), than to construct $h^*$-polynomials being $\gamma$-positive but not log-concave. The following statement provides both such examples.

\begin{theorem}
    There exist Gorenstein polytopes whose $h^*$-polynomials are $\gamma$-positive and not log-concave, and vice-versa.
\end{theorem}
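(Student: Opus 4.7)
The plan is to exhibit two separate Gorenstein polytopes, one for each direction, thereby confirming that $\gamma$-positivity and log-concavity of $h^*$-polynomials are incomparable properties within the Gorenstein class.

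For the \emph{log-concave but not $\gamma$-positive} direction, my plan is to reuse the standard reflexive $d$-simplex from Example~\ref{ex:reflexive-simplices}, whose $h^*$-polynomial is $1+x+x^2+\cdots+x^d$. This has a constant coefficient sequence, so the inequalities $h_i^2\geq h_{i-1}h_{i+1}$ all reduce to $1\geq 1$ and it is log-concave by definition. On the other hand, the explicit computation already carried out in the proof of Theorem~\ref{thm:reflexiv-unimod-triang-not-gamma-positive} shows that this polynomial fails to be $\gamma$-positive for every $d\geq 2$.

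For the \emph{$\gamma$-positive but not log-concave} direction, I would first identify a palindromic polynomial that is $\gamma$-positive by construction but whose central coefficient overshoots the geometric mean of its neighbors. A natural candidate in dimension $4$ is
\[p(x) := (1+x)^4 + 11\,x^2 = 1+4x+17x^2+4x^3+x^4,\]
whose $\gamma$-vector is $(1,0,11)$ (so $\gamma$-positivity is automatic), while the inequality $h_1^2 = 16 < 17 = h_0 h_2$ shows that $p(x)$ is not log-concave. More generally, for any even $d$ the polynomial $(1+x)^d + \gamma\,x^{d/2}$ is $\gamma$-positive by construction, and for $\gamma$ large enough relative to $d$ it violates log-concavity around the central coefficient, giving a whole range of target polynomials.

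The key step, and the main obstacle, is to realize some such target polynomial as the $h^*$-polynomial of a concrete Gorenstein polytope. The most direct route is to invoke the Kreuzer--Skarke classification of reflexive $4$-polytopes and extract an example with the prescribed $h^*$-vector. Alternatively, one may engineer a higher-dimensional Gorenstein polytope via Gorenstein-preserving operations, in particular free sums of smaller reflexive polytopes (which multiply $h^*$-polynomials), so as to shape the central part of the $h^*$-vector to overshoot the log-concavity bound while the $\gamma$-vector remains non-negative. Once such a polytope has been explicitly exhibited, verifying $\gamma$-positivity and the failure of log-concavity amounts to an elementary numerical check.
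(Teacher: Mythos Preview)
Your first direction (log-concave but not $\gamma$-positive) is fine and matches the paper exactly: the standard reflexive simplex with $h^*$-polynomial $1+x+\cdots+x^d$ does the job.

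The second direction, however, is incomplete. You correctly identify a target polynomial of the shape $(1+x)^4+\gamma\,x^2$ that is $\gamma$-positive by construction and violates $h_1^2\geq h_0h_2$ once $\gamma$ is large, but you never actually produce a Gorenstein polytope realizing such a polynomial. Appealing to the Kreuzer--Skarke list or to ``Gorenstein-preserving operations'' is not a proof until you exhibit the polytope. In particular, the free-sum route you suggest does not work in any obvious way: a free sum of reflexive polytopes multiplies the $h^*$-polynomials, but $1+4x+17x^2+4x^3+x^4$ (and likewise any $(1+x)^4+\gamma x^2$ with $\gamma>2$) does not factor as a product $(1+ax+x^2)(1+bx+x^2)$ with real $a,b$, since the resulting system $a+b=4$, $ab=\gamma+2$ has negative discriminant. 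So you cannot reach these targets by combining reflexive polygons, and you give no alternative construction.

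The paper closes this gap by writing down an explicit $4$-dimensional reflexive simplex (five vertices given as the columns of a $4\times 5$ integer matrix) whose $h^*$-polynomial is $1+4x+22x^2+4x^3+x^4$; here $\gamma=(1,0,16)$ is non-negative while $h_1^2=16<22=h_0h_2$. Your proposal would be complete if you supplied such an explicit polytope (or carried out the database search and reported the result), but as written the existence claim in the $\gamma$-positive, non-log-concave direction is left unproved.
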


\begin{proof}
    An example of a lattice polytope whose $h^*$-polynomial is log-concave but not $\gamma$-positive is a standard reflexive simplex, cf. Example~\ref{ex:reflexive-simplices}. For a polytope whose $h^*$-polynomial is $\gamma$-positive but not log-concave, consider the polytope $P$ whose vertices are the columns of the matrix:
    \[\begin{bmatrix}
        1 & 0 & 0 & 1 & -9\\
        0 & 1 & 0 & 1 & -5\\
        0 & 0 & 1 & 1 & -3\\
        0 & 0 & 0 & 2 & -2
    \end{bmatrix}\]
    One has $h^*_P(x)=1+4x+22x^2+4x^3+x^4$ which is a $\gamma$-positive polynomial failing to be log-concave, since $16=h_1^2<h_0h_2=22$. 
\end{proof}

The following result is a strengthening of Theorem~\ref{thm:quadratic-h-star-not-real-rooted}. It shows that even in the reflexive case, the existence of quadratic triangulations is not strong enough to guarantee the real-rootedness of the $h^*$-polynomial.

\begin{theorem}\label{thm:non-realrooted}
    There exists a Gorenstein (in fact, reflexive) polytope having a quadratic triangulation, whose $h^*$-polynomial is not real-rooted.
\end{theorem}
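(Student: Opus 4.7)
The plan is to upgrade the construction from Theorem~\ref{thm:quadratic-h-star-not-real-rooted} to a reflexive example. Stembridge's order polytope used there fails to be Gorenstein because the underlying poset $Q$ is not pure: the maximal chain $1 \prec 12 \prec 14 \prec 17$ has length three while $1 \prec 3 \prec 5 \prec 7 \prec 9 \prec 11 \prec 13 \prec 15 \prec 17$ has length eight. The natural strategy is to replace $Q$ by a pure refinement $\widetilde{Q}$, obtained by inserting extra chain-elements along every short maximal chain until all maximal chains of $\widehat{Q}$ (with adjoined $\hat 0$ and $\hat 1$) have the same length.

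By a classical theorem of Hibi, the order polytope $\mathcal{O}(\widetilde{Q})$ is then Gorenstein, and after recentering about its (unique) canonical interior lattice point one obtains a reflexive realization. Moreover, Stanley's canonical triangulation of any order polytope is regular, unimodular, and flag (see \cite{stanley-poset} and \cite[Theorem~3.3]{haase-paffenholz-piechnik-santos}), hence quadratic, and this property persists under the unimodular transformation that recenters the polytope. This handles the first two requirements in the statement purely conceptually.

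The delicate step is to guarantee that the chain-padding does not accidentally make $h^*_{\mathcal{O}(\widetilde{Q})}(x)$ real-rooted. I would proceed by direct computation: choose the minimal pure refinement $\widetilde{Q}$ (or a small variant thereof), compute $h^*_{\mathcal{O}(\widetilde{Q})}(x)$ via the $P$-partition generating function, and numerically verify that at least one conjugate pair of non-real roots survives. Given the robustness of the pathology visible in equation~(\ref{eq:Stembridge}) --- where the non-real roots sit comfortably away from the real axis --- such a $\widetilde{Q}$ should exist at the price of computer-assisted verification.

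The main obstacle is precisely this last verification: there is no general principle asserting that chain-padding a poset preserves non-real-rootedness of the associated $h^*$-polynomial, so the proof ultimately hinges on an explicit calculation for a carefully chosen $\widetilde{Q}$. If the most naive padding proved insufficient, I would iterate over small pure extensions of $Q$, or alternatively pursue the orthogonal route of directly exhibiting a small reflexive lattice simplex or a symmetric edge polytope whose $h^*$-polynomial is known (or can be checked) to be non-real-rooted while still carrying a quadratic triangulation.
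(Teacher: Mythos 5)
Your main proposed route has a gap that is more serious than you realize. You note, correctly, that ``there is no general principle asserting that chain-padding a poset preserves non-real-rootedness,'' and you plan to compensate by explicit computation. But the issue is not merely a verification to be outsourced to a computer: the paper itself records (in the discussion following Theorem~\ref{thm:branden-gamma}) that it is an \emph{open problem} whether the order polytope of a graded poset can ever have a non-real-rooted $h^*$-polynomial. Br\"and\'en's theorem guarantees $\gamma$-positivity for such posets, and no graded counterexample to real-rootedness is known. So ``graduate Stembridge's poset and compute'' would, if it succeeded, resolve that open question --- which means you cannot treat the computational step as routine, and there is a real possibility that the approach simply produces real-rooted polynomials no matter how you pad.

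The paper sidesteps this entirely. Instead of modifying the poset, it keeps Stembridge's poset $L$ intact and replaces the order polytope by the \emph{enriched chain polytope} $Q$ of $L$ (equivalently, a symmetric edge polytope of type B on the complement of the comparability graph). This polytope is automatically reflexive, it has a quadratic triangulation by \cite[Proposition~3.8]{ohsugi-tsuchiya-2020-2}, and its $h^*$-polynomial is related to that of the order polytope by the explicit substitution $h^*_Q(x)=(x+1)^{17}\,h^*_P\!\left(4x/(x+1)^2\right)$ (using that $L$ is narrow, so the left peak polynomial reduces to the order polynomial). One then checks directly that two non-real roots of $h^*_P$ give rise to non-real roots of $h^*_Q$. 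Your closing remark --- ``alternatively pursue \dots\ a symmetric edge polytope whose $h^*$-polynomial \dots\ is non-real-rooted while still carrying a quadratic triangulation'' --- is in fact the route the paper takes; you should promote that from a fallback to the primary strategy and supply the transformation formula and the citation for flagness. As written, however, the main line of your proposal rests on an unresolved open problem and cannot be accepted as a proof.
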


\begin{proof}
    Recently, Ohsugi and Tsuchiya initiated the study of a new family of polytopes: the class of enriched chain polytopes \cite{ohsugi-tsuchiya-2020-1}. This class arises by taking a poset $L$ on $[n]$ and defining the \emph{enriched chain polytope of $L$}, as the convex hull of the set 
    \[\{\pm e_{i_1} \pm \cdots \pm e_{i_k} : \{i_1,\ldots,i_k\} \text{ is an antichain of }L\}.\]  

    Let us call $Q$ the enriched chain polytope arising from Stembridge's poset (see Figure~\ref{fig:stembridge-poset}). In \cite[Example 3.7]{ohsugi-tsuchiya-2020-2}, the polytope $Q$ is presented as a ``symmetric edge polytope of type B'' (where the graph is precisely the complement of the comparability graph of Stembridge's poset).

    By relying on a formula for $h^*$-polynomials of symmetric edge polytopes of type B proved in \cite[Theorem 0.3]{ohsugi-tsuchiya-2020-1}, it is possible to compute the $h^*$-polynomial of $Q$:
    \begin{align*}
        h^*_Q(x)&=(x+1)^{17}h^*_P\left(\frac{4x}{(x+1)^2}\right) \\
        &=x^{17} + 145x^{16} + 7432x^{15} + 174888x^{14} + 2128332x^{13}+14547884x^{12} + 59233240x^{11} \\
        &\qquad+148792184x^{10} + 234916470x^9 + 234916470x^8 + 148792184x^7 +59233240x^6 \\
        &\qquad+ 14547884x^5 + 2128332x^4 + 174888x^3+7432x^2 + 145x + 1,
    \end{align*}
    where $P$ is the order polytope of Stembridge's poset  --- its $h^*$-polynomial was explicitly stated in equation~\eqref{eq:Stembridge}.
    This palindromic log-concave polynomial has two non-real roots near $-3.88091 \pm 0.18448i$. 
    A technical note is that, even though the ``left peak polynomial'' $W_P^{(\ell)}$ should be used in general, since the poset under consideration is narrow (i.e. it allows a partition into two chains), we can replace it by $W(P)(x)$, the $P$-Eulerian polynomial, which coincides with $h^*_P(x)$; see \cite[Remark 2.3]{ohsugi-tsuchiya-2020-1}. The fact that this polytope possesses a quadratic triangulation follows from \cite[Proposition~3.8]{ohsugi-tsuchiya-2020-2}.
\end{proof}

\subsubsection{Positive results}

Let us undertake the discussion initiated above. After Musta\c{t}\v{a} and Payne \cite{mustata-payne} found counterexamples to the initial conjecture posed by Hibi in \cite{hibi}, a refinement of the conjecture was proposed by Hibi and Ohsugi \cite{hibi-ohsugi}. This refined version of the conjecture asserts that Gorenstein IDP polytopes are $h^*$-unimodal. The most important positive result regarding inequalities for Gorenstein and reflexive polytopes is precisely a proof of Hibi and Ohsugi's conjecture, achieved by Adiprasito, Papadakis, Petrotou and Steinmeyer.

\begin{theorem}[\cite{adiprasito-papadakis-petrotou-steinmeyer}]
    If $P$ is a Gorenstein IDP polytope, then $h^*_P(x)$ is unimodal.
\end{theorem}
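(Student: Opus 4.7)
The plan is to recast the statement as a question about the Hilbert function of a standard graded Artinian Gorenstein algebra and to resolve it via a Hard Lefschetz argument. Let $R = k[P] = \bigoplus_{m \geq 0} R_m$ denote the Ehrhart ring of $P$ over a field $k$, with $R_m$ spanned by monomials indexed by $mP \cap \mathbb{Z}^n$. Two classical facts hand us the right algebraic object: because $P$ is IDP, the ring $R$ is generated in degree one and hence standard graded; and because $P$ is a Gorenstein polytope, $R$ is a Gorenstein $k$-algebra in the sense of Stanley. Its Hilbert series is exactly the Ehrhart series, $\mathrm{Hilb}(R;x) = h^*_P(x)/(1-x)^{d+1}$.

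Since $R$ is Cohen-Macaulay of Krull dimension $d+1$, I would choose a linear system of parameters $\ell_1,\ldots,\ell_{d+1} \in R_1$ and form the Artinian reduction $\bar R := R/(\ell_1,\ldots,\ell_{d+1})$. Then $\bar R$ is a standard graded Artinian Gorenstein $k$-algebra with Hilbert series $h^*_P(x) = \sum_{i=0}^{s} h_i x^i$. The Gorenstein property forces the palindromicity $\dim_k \bar R_i = \dim_k \bar R_{s-i}$, so $h^*$-unimodality reduces to showing $\dim_k \bar R_i \leq \dim_k \bar R_{i+1}$ for every $i < s/2$. These inequalities would follow from the existence of a \emph{strong Lefschetz element}: a linear form $\omega \in \bar R_1$ such that the multiplication map $\omega^{s-2i} \colon \bar R_i \to \bar R_{s-i}$ is an isomorphism for every $0 \leq i \leq \lfloor s/2\rfloor$. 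Any such $\omega$ makes multiplication by $\omega$ injective on $\bar R_i$ for $i < s/2$, yielding the required chain of inequalities.

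The main obstacle is establishing the strong Lefschetz property for Artinian reductions of Ehrhart rings of Gorenstein IDP polytopes in this generality. One cannot hope to derive it from Theorem~\ref{thm:adiprasito-steinmeyer} alone: even in the case $d = 6$, $s = 4$ (Gorenstein of index three), combining strong bottom-heaviness and second half unimodality with palindromicity only yields $h_1 \geq h_0$ and leaves $h_2 \geq h_1$ unjustified. Nor can one systematically reduce to the case of a quadratic (or even regular unimodular) triangulation, since it is open whether every Gorenstein IDP polytope admits one. The resolution is exactly the anisotropy framework of Adiprasito-Papadakis-Petrotou-Steinmeyer: they establish Hodge-Riemann-type relations for intersection pairings on $\bar R$, from which the strong Lefschetz property follows in a characteristic-independent fashion. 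Invoking their theorem on $\bar R$ closes the argument.
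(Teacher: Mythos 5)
Your algebraic scaffolding — Ehrhart ring $R = k[P]$, standard gradedness from IDP, Gorensteinness from the polytope being Gorenstein, Artinian reduction $\bar R$ with Hilbert series $h^*_P(x)$, and a Lefschetz element to force the increasing half of the palindromic chain — is the right way to think about this result, and your observation that the inequalities of Theorem~\ref{thm:adiprasito-steinmeyer} do not alone close the gap is correct. However, there is a genuine gap in the last step: Theorem~1.3 of Adiprasito--Papadakis--Petrotou--Steinmeyer is stated and proved for \emph{reflexive} IDP polytopes, not for arbitrary Gorenstein IDP polytopes, and you invoke their theorem ``on $\bar R$'' for the general Gorenstein case without justification. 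The distinction matters precisely in your own test case $d=6$, $s=4$: for a reflexive polytope ($s = d$) strong bottom-heaviness $h_{d+1-i} \leq h_i$ together with palindromicity $h_{d+1-i} = h_{i-1}$ already yields the chain $h_0 \leq h_1 \leq \cdots$, so APPS's inequality theorem \emph{does} imply unimodality in the reflexive case without any extra Lefschetz input; it is exactly when $s < d$ that the shift $h_j \leq h_{j + (d+1-s)}$ skips steps and the argument breaks. The paper bridges this by citing Bruns and R\"omer \cite[Corollary~7]{bruns-romer}, which shows that the set of $h^*$-polynomials of IDP Gorenstein polytopes coincides with that of IDP reflexive polytopes — so one transfers the unimodality conclusion from the reflexive case. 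Your proposal omits this reduction entirely.

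A secondary caution: the APPS machinery does not literally establish a strong Lefschetz element on the Artinian reduction of the Ehrhart ring $k[P]$ in the way you describe. Their proof passes through Stanley--Reisner rings of suitable (not necessarily unimodular) triangulations and an anisotropy argument in characteristic $2$ in the style of Papadakis--Petrotou, and what they extract are the partial inequalities of Theorem~\ref{thm:adiprasito-steinmeyer} rather than a full strong Lefschetz isomorphism $\omega^{s-2i}\colon \bar R_i \to \bar R_{s-i}$ for $\bar R$ itself. So ``Hodge--Riemann-type relations for intersection pairings on $\bar R$, characteristic-independent'' overstates what is currently known, even though the spirit of the argument is as you sketch. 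The concrete fix for your write-up is: establish the reflexive case via APPS Theorem~1.3 (or, equivalently, via strong bottom-heaviness plus palindromicity as above), then cite Bruns--R\"omer to transfer the statement to the Gorenstein case.
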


We point out that in \cite[Theorem~1.3]{adiprasito-papadakis-petrotou-steinmeyer} Adiprasito, Papadakis, Petrotou and Steinmeyer refer only to reflexive polytopes (as opposed to Gorenstein polytopes). However, a result of Bruns and R\"omer \cite[Corollary~7]{bruns-romer} guarantees that the set of $h^*$-polynomials of IDP reflexive polytopes is the same as the set of $h^*$-polynomials of IDP Gorenstein polytopes\footnote{This is not true if one removes the assumption of IDP-ness. For example, the Reeve tetrahedron of Example~\ref{ex:reeve-tetrahedra} with $q=1$ is Gorenstein, with $h^*$-polynomial equal to $x^2+1$; this cannot be the $h^*$-polynomial of a reflexive polytope by the main result of \cite{hibi-lbt}.}. 
It is worth mentioning that the above result under the stronger assumption of the existence of regular unimodular triangulation was proved precisely by Bruns and R\"omer \cite{bruns-romer} --- and, as they explain in their paper, their argument would work as well for \emph{arbitrary} unimodular triangulations, provided that the $g$-conjecture for spheres holds; this has been proved recently in \cite{adiprasito,papadakis-petrotou}. Previous to the work of Bruns and R\"omer, by building upon ideas of Reiner and Welker \cite{reiner-welker} and motivated by the problem of proving the $h^*$-unimodality of Birkhoff polytopes, Athanasiadis  \cite{athanasiadis-birkhoff} showed that if $P$ is compressed and Gorenstein then $h^*_P(x)$ is unimodal.

It is natural to ask whether the preceding result can be upgraded to log-concavity. As the following proposition shows, whenever the degree of the polytope is not too big, log-concavity holds indeed.

\begin{proposition}\label{prop:d leq 5}
    If $P$ is a Gorenstein IDP polytope with $\deg h^*_P(x) \leq 5$, then $h^*_P(x)$ is log-concave.
\end{proposition}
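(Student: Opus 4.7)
The plan is to combine three ingredients already at our disposal: (i) the palindromicity of $h^*_P(x)$ coming from the Gorenstein hypothesis, which forces $h_i=h_{s-i}$ and in particular $h_0=h_s=1$; (ii) the unimodality guaranteed by the Adiprasito--Papadakis--Petrotou--Steinmeyer theorem; and (iii) the three-term log-concavity $h_1^2\geq h_0h_2$ established in Proposition~\ref{prop:idp-log-conc-three-terms}. Setting $s=\deg h^*_P(x)$, the argument would then be a short case analysis on $s\in\{0,1,2,3,4,5\}$.

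The cases $s\leq 1$ are vacuous: the vector is either $(1)$ or $(1,1)$. For $s=2$ and $s=3$ the palindromic vectors read $(1,h_1,1)$ and $(1,h_1,h_1,1)$, respectively, so all log-concavity inequalities reduce to $h_1\geq 1$, which follows from $h_1\geq h_0=1$ by unimodality. For $s=4$ the vector is $(1,h_1,h_2,h_1,1)$, and exploiting the symmetry the log-concavity inequalities at positions $1,2,3$ collapse to just two nontrivial statements: $h_1^2\geq h_2$ and $h_2\geq h_1$. The first is exactly $h_1^2\geq h_0h_2$ from Proposition~\ref{prop:idp-log-conc-three-terms}, applicable because $d\geq s=4\geq 2$; the second follows from unimodality combined with palindromicity, which forces the peak of the vector to lie in its middle half. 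The case $s=5$, with vector $(1,h_1,h_2,h_2,h_1,1)$, is handled by exactly the same pair of inequalities: the only slightly delicate point is the observation that the inequality $h_2^2\geq h_1h_3=h_1h_2$ is equivalent to $h_2\geq h_1$, which uses that $h_2>0$.

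Positivity of all entries, required by the convention on log-concavity adopted in the article, comes essentially for free, since unimodality together with $h_0=h_s=1$ forces every $h_i\geq 1$. I do not anticipate any serious obstacle in this argument; it is essentially a distillation of the previously recorded results for polytopes whose $h^*$-vector is short enough that only low-indexed coefficients appear. What makes the bound $s\leq 5$ crucial is that the first palindromic case which is \emph{not} captured by this toolkit is $s=6$: there one would need $h_2^2\geq h_1h_3$, an inequality that is neither a consequence of unimodality nor of Proposition~\ref{prop:idp-log-conc-three-terms}.
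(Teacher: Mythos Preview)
Your proof is correct and follows essentially the same route as the paper: a short case analysis on $s$, using palindromicity, Proposition~\ref{prop:idp-log-conc-three-terms} for $h_1^2\geq h_2$, and the inequality $h_2\geq h_1$ for the remaining log-concavity condition. The only minor difference is that you obtain $h_2\geq h_1$ from the Adiprasito--Papadakis--Petrotou--Steinmeyer unimodality theorem, whereas the paper invokes the more elementary spanning-polytope inequality of Hofscheier--Katth\"an--Nill (Theorem~\ref{thm:spanning}) for the same conclusion.
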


\begin{proof}
    Let $h^*_P(x)$ be the $h^*$-polynomial and let $s=\deg h_P^*(x) \leq 5$. We split the proof into three cases: 
    \begin{itemize}
    \item If $s\leq 3$, since $h_1 \geq h_0$ holds and $h_P^*(x)$ is palindromic, then we conclude the assertion without IDP-ness. 
    
    \item If $s=4$, we have $h_P^*(x) = 1 + a x + bx^2 + a x^3 + x^4$ for some non-negative integers $a,b$. 
    By Proposition~\ref{prop:idp-log-conc-three-terms}, we know that $a^2 \geq b$. 
    Moreover, since $P$ is spanning if $P$ is IDP, it follows from Theorem~\ref{thm:spanning} that $h_1 \leq h_2$ holds by taking $i=j=1$, i.e., $b \geq a$ holds, which implies that $b^2 \geq a^2$. 
    Hence, the log-concavity of $h_P^*(x)$ holds. 
    
    \item If $s=5$, we have $h^*(x) = 1 + a x + bx^2 + b x^3 + a x^4 + x^5$. 
    By the same argument as in the case $s=4$, we obtain that $h_P^*(x)$ is log-concave. \qedhere
    \end{itemize}
\end{proof}

\begin{remark}\label{rem:unimod}
    For general reflexive polytopes of dimension $d \leq 5$, the IDP-ness is not required to claim $h^*$-unimodality. 
    If $d \leq 5$ and $P$ is reflexive, then $h^*_P(x)$ is unimodal. Moreover, if $d \leq 3$, then $h^*_P(x)$ is log-concave. This directly follows from \cite{hibi-lbt}, which claims that $h_i \geq h_1$ holds for $i=1,\ldots,d-1$ if $P$ contains an interior lattice point. Note that $0 < h_d \leq h_1$ is always true if $P$ contains an interior lattice point.
\end{remark}

One of the most important open problems in topological combinatorics is a conjecture posed by Charney--Davis \cite[Conjecture~D]{charney-davis}. A strengthening proposed by Gal in \cite{gal} asserts the $\gamma$-positivity of the $h$-polynomial of a flag simplicial complex that is homeomorphic to a sphere. The special case of Gal's conjecture involving Ehrhart theory, reads as follows.

\begin{restatable}[Gal's conjecture for $h^*$-polynomials]{conjecture}{galehrhart}\label{conj:ehrhart-gal}
    If $P$ is a Gorenstein polytope having a quadratic triangulation, then the $h^*$-polynomial of $P$ is $\gamma$-positive.
\end{restatable}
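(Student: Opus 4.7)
The plan is to reduce Conjecture~\ref{conj:ehrhart-gal} to Gal's conjecture for flag simplicial spheres, which predicts that the $h$-vector of any flag simplicial sphere is $\gamma$-positive. Write $\mathcal{T}$ for a fixed quadratic (i.e., regular, unimodular, flag) triangulation of $P$.

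First, I would express $h^*_P(x)$ as the Hilbert series of a Koszul Gorenstein Artinian algebra. Unimodularity of $\mathcal{T}$ yields the identity $h^*_P(x) = h_{\mathcal{T}}(x)$, and flagness together with Fröberg's theorem (already invoked in the proof of Theorem~\ref{thm:quadratic}) ensures that the Stanley--Reisner ring $\mathbb{F}[\mathcal{T}]$ is Koszul. Since $\mathcal{T}$ triangulates a polytope, $\mathbb{F}[\mathcal{T}]$ is Cohen--Macaulay, and quotienting by a linear system of parameters produces an Artinian Koszul algebra $A$ whose Hilbert series equals $h^*_P(x)$. The Gorenstein hypothesis on $P$, together with the IDP property (which follows from the existence of the unimodular triangulation), forces $A$ to be Gorenstein -- this is precisely the content of the construction of Bruns and Römer \cite{bruns-romer}, who in fact realize $h^*_P(x)$ as the $h$-polynomial of an auxiliary simplicial sphere $\Delta$ manufactured from $\mathcal{T}$. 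A careful analysis of that construction should show that when $\mathcal{T}$ is flag, the sphere $\Delta$ is flag as well, at which point Conjecture~\ref{conj:ehrhart-gal} becomes a direct consequence of Gal's conjecture applied to $\Delta$.

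In parallel, one could attempt a more direct algebraic attack on $A$: find a monomial basis whose partition into ``primitive'' and ``non-primitive'' parts produces a combinatorial interpretation of the $\gamma$-coefficients as enumerators of certain flag subcomplexes, in the spirit of arguments used by Petersen, by Karu for barycentric subdivisions, and by Aisbett. An alternative route is to apply symmetric decompositions of palindromic polynomials (as exploited by Stapledon and subsequent authors) to split $h^*_P(x)$ into smaller pieces whose individual $\gamma$-positivity is easier to verify.

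The main obstacle is Gal's conjecture itself, which is widely open -- even its weakest form, the Charney--Davis inequality $(-1)^{\lfloor d/2 \rfloor} h^*_P(-1) \geq 0$, is unknown for general flag simplicial spheres. Consequently, any complete proof of Conjecture~\ref{conj:ehrhart-gal} will either inherit that difficulty or else must leverage the additional rigidity coming from the polytopal setting (the lattice structure of $P$, the squarefree quadratic initial ideal of the Ehrhart toric ideal encoding $\mathcal{T}$, and the extra integrality constraints on $h^*_P$) in a genuinely new way. Identifying and exploiting this extra structure appears to be the decisive step, and I would expect a successful argument to proceed by isolating which features of the Koszul Gorenstein Artinian algebra $A$ distinguish it from an arbitrary flag-sphere Stanley--Reisner quotient.
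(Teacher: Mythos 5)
This statement is explicitly labeled a \emph{conjecture} in the paper --- there is no proof to compare your attempt against. The paper does not prove it; it presents Conjecture~\ref{conj:ehrhart-gal} as the Ehrhart-theoretic specialization of Gal's conjecture for flag homology spheres, records that the flagness hypothesis is essential (Theorem~\ref{thm:reflexiv-unimod-triang-not-gamma-positive}), that the statement holds when $\deg h^*_P \leq 4$ (citing Gal), and that the main known special case is Br\"and\'en's theorem for order polytopes of graded posets (Theorem~\ref{thm:branden-gamma}). It also notes that the conclusion cannot be strengthened to real-rootedness (Theorem~\ref{thm:non-realrooted}).

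Your proposal is therefore not a proof and cannot be one with current technology --- but you correctly diagnose why: the natural reduction passes through the open Gal conjecture for flag spheres, and you are right that even the Charney--Davis inequality is open in this generality. This matches the paper's own framing. One point to be careful about if you were to push the reduction further: when you invoke Bruns--R\"omer to manufacture a sphere $\Delta$ from the flag triangulation $\mathcal{T}$, it is genuinely nontrivial (and you acknowledge it) that $\Delta$ inherits flagness --- their construction uses a lattice pyramid/cone argument, and flagness of the boundary complex you ultimately need is not automatic from flagness of $\mathcal{T}$. The cleaner standard route is to observe that the link of any interior vertex of a flag unimodular triangulation of a reflexive polytope is already a flag homology sphere with $h$-vector equal to $h^*_P$, which lands you in the Gal conjecture directly without detouring through Bruns--R\"omer; but this reduction is well known and still leaves you facing the open problem. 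In short: you have correctly identified the obstruction rather than a proof, which is consistent with the statement being an open conjecture in the paper.
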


As Theorem~\ref{thm:reflexiv-unimod-triang-not-gamma-positive} shows, the flagness of the regular unimodular triangulation is an essential assumption in the above conjecture. We mention that if $P$ is a Gorenstein polytope of degree $s\leq 4$, then the above conjecture holds (in fact, real-rootedness holds); we refer to \cite[Section~3.1]{gal}.

Perhaps the most important special case for which Conjecture~\ref{conj:ehrhart-gal} is known to hold, is for order polytopes, thanks to a result of Br\"and\'en.

\begin{theorem}[\cite{branden-gamma}]\label{thm:branden-gamma}
    The order polytope of a graded poset has a $\gamma$-positive $h^*$-polynomial.
\end{theorem}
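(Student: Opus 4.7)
The plan is to invoke Stanley's identification \cite{stanley-poset} $h^*_{\mathcal{O}(P)}(x) = W_P(x) := \sum_{\sigma \in \mathcal{L}(P)} x^{\mathrm{des}(\sigma)}$, where $\mathcal{L}(P)$ denotes the set of linear extensions of $P$, viewed as permutations of $[n]$ via a fixed natural labeling with $n = |P|$, and $\mathrm{des}(\sigma)$ is the number of descents of $\sigma$. Granting this, it suffices to show that $W_P(x)$ is $\gamma$-positive whenever $P$ is graded.

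First I would check palindromicity. When $P$ is graded of rank $r$, the involution on $\mathcal{L}(P)$ that reverses the one-line notation while simultaneously replacing every label by its rank-complement is a bijection, and it exchanges descents with ascents. This yields $W_P(x) = x^{n-1} W_P(1/x)$, so the $\gamma$-expansion $W_P(x) = \sum_j \gamma_j \, x^j (1+x)^{n-1-2j}$ is uniquely determined over $\mathbb{Z}$; the task is then to prove $\gamma_j \geq 0$ combinatorially.

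The heart of the argument is a poset-compatible version of the Foata--Strehl valley-hopping action. For each label $i \in [n]$, I would define an involution $\phi_i : \mathcal{L}(P) \to \mathcal{L}(P)$ that classifies $i$ as a valley, peak, double ascent, or double descent of $\sigma$ (padding the word with artificial $0$'s at the ends), fixes peaks and valleys, and swaps double ascents with double descents by sliding $i$ across the maximal contiguous block of entries smaller (respectively larger) than it. The graded hypothesis, together with an appropriate natural labeling, guarantees that every entry swept over by $i$ belongs to the same rank as $i$ and is therefore incomparable to $i$ in $P$, so $\phi_i(\sigma)$ remains a linear extension. The $\phi_i$ pairwise commute and generate a $\mathbb{Z}_2^n$-action on $\mathcal{L}(P)$. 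Each orbit $\mathcal{O}$ contains a unique representative $\sigma_{\mathcal{O}}$ with no double ascents, and a direct count shows the orbit contributes $x^{\mathrm{des}(\sigma_{\mathcal{O}})}(1+x)^{n-1-2\mathrm{des}(\sigma_{\mathcal{O}})}$ to $W_P(x)$. Summing over orbits produces the $\gamma$-expansion with non-negative coefficients $\gamma_j = \#\{\mathcal{O} : \mathrm{des}(\sigma_{\mathcal{O}}) = j\}$.

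The main obstacle is precisely the poset-compatibility of the valley-hopping action. For a generic poset, a naive Foata--Strehl hop can slide $i$ past one of its $P$-predecessors or successors, destroying the linear-extension property. The graded condition, coupled with a labeling that respects the rank function, is exactly the structural input that confines every hop to a swap among pairwise incomparable elements; verifying that the modified $\phi_i$ are well-defined involutions on $\mathcal{L}(P)$ and that their orbit structure is as described is the technical heart of the proof.
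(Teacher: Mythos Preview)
The paper does not supply its own proof of this theorem; it is stated with a bare citation to Br\"and\'en. Your sketch is indeed along the lines of Br\"and\'en's argument, so there is nothing to compare on the level of strategy. That said, two concrete points in your outline are not correct and would derail the argument if left as written.

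First, the symmetry degree is wrong. For a graded poset $P$ on $n$ elements of rank $r$, the $h^*$-polynomial $W_P(x)$ is palindromic of degree $n-1-r$, not $n-1$. (A chain on $n$ elements has $W_P(x)=1$; an antichain has the Eulerian polynomial of degree $n-1$.) Consequently the $\gamma$-basis you want is $\{x^j(1+x)^{n-1-r-2j}\}$, and the involution you describe for palindromicity cannot be right as stated.

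Second, the claimed reason that valley-hopping stays inside $\mathcal{L}(P)$ --- that ``every entry swept over by $i$ belongs to the same rank as $i$'' --- is false. Take $P$ to be the disjoint union of two $2$-chains with the rank-compatible natural labeling $1<3$, $2<4$. In the linear extension $1324$, the element $1$ is a double ascent (with your $0$-padding), and any version of the hop that moves $1$ toward a double-descent position forces $1$ to pass elements of strictly higher rank, in particular $3$; the resulting word $3241$ is not a linear extension. More generally, the maximal block of entries smaller than $i$ that you slide across can contain elements of rank strictly below $\mathrm{rk}(i)$ that are comparable to $i$. Br\"and\'en's action is more delicate than the naive Foata--Strehl move restricted to $\mathcal{L}(P)$, and the sign-graded structure enters in a way not captured by ``same rank''. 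You correctly flag this step as the technical heart; the issue is that the heuristic you propose for it does not survive contact with small examples.
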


For an abridged account of order polytopes we refer to Section~\ref{sec:six}. The preceding statement is a special case of Conjecture~\ref{conj:ehrhart-gal} because an order polytope of a poset $P$ is Gorenstein if and only if the poset is graded, and order polytopes possess quadratic triangulations. The unimodality for the $h^*$-polynomial of order polytopes of graded posets had previously been obtained by Reiner and Welker via the $g$-theorem for simplicial polytopes \cite{reiner-welker}. It is not known whether Br\"and\'en's result can be upgraded to real-rootedness, while by Theorem~\ref{thm:non-realrooted} it is known that there exists a Gorenstein polytope having a quadratic triangulation whose $h^*$-polynomial is not real-rooted. 
In other words, Conjecture~\ref{conj:ehrhart-gal} cannot be strengthened to real-rootedness, whereas it is not known whether Theorem~\ref{thm:branden-gamma} can.

\begin{remark}
In \cite{higashitani-unimodal}, Higashitani constructed many examples concerning unimodality of $h^*$-polynomials. 
In fact, \emph{alternatingly increasing} $h^*$-polynomials are discussed and provide many examples on this, e.g., 
lattice polytopes having non-log-concave alternatingly increasing $h^*$-polynomials, those having neither log-concave nor alternatingly increasing but unimodal $h^*$-polynomials, and so on. 
\end{remark}

\section{Ehrhart positivity}\label{sec:four}

A property that has received considerable attention in the past few years is that of ``Ehrhart positivity''. We will say that a polytope $P$ is \emph{Ehrhart positive} if all coefficients of the Ehrhart polynomial $E_P(x)$ are non-negative. If one writes
    \[ E_P(x) = a_d x^d + a_{d-1} x^{d-1} + \cdots + a_1 x + 1,\]
where $d=\dim P$, then $a_d$, $a_{d-1}$ and $a_0$ are known to be non-negative --- in fact, positive. The reason is that $a_d = \vol(P)$, $a_{d-1}=\frac{1}{2}\vol(\partial P)$ and $a_0=1$. For the remaining coefficients, usually referred to as the \emph{middle coefficients}, the situation is not as nice, because they can in fact be negative.

One of the main sources on Ehrhart positivity is a survey by Liu \cite{liu-survey}. In this section we will make a brief summary of the salient points regarding polynomials either having (or not) this important property. We will discuss several pathological examples, plausible assertions that turn out to be false (or open), and we will further discuss the connection with real-rootedness (and thus log-concavity and unimodality) of $h^*$-polynomials.

\subsection{Examples of polytopes having Ehrhart polynomials with negative coefficients}

In order to have a better understanding of how badly the Ehrhart positivity may fail, we present a couple of examples that reveal that, in general, the signs of the Ehrhart coefficients can behave in a quite unpredictable way. Both of these families of examples were found by Hibi, Higashitani, Tuschiya and Yoshida.

\begin{theorem}[{\cite[Theorem~1.1]{hibi-higashitani-yoshida-tsuchiya}}]
    For every $d \geq 3$, there exists a lattice polytope $P$ such that all coefficients of degree $1,\ldots,d-2$ of $E_P(x)$ are negative.
\end{theorem}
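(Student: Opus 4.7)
My plan is to construct, for each $d\geq 3$, an explicit lattice simplex $P\subseteq \mathbb{R}^d$ whose $h^*$-vector is engineered so that when we apply the change-of-basis formula
\[ E_P(x) = \sum_{j=0}^{d} h_j\binom{x+d-j}{d},\]
all coefficients of degree $1,2,\ldots,d-2$ in the resulting Ehrhart polynomial come out negative. Each binomial $\binom{x+d-j}{d}=\tfrac{1}{d!}(x+d-j)(x+d-j-1)\cdots(x-j+1)$ is a product of $d$ consecutive linear factors and hence has $d$ real roots at the consecutive integers $-(d-j),\ldots,j-1$; expanding it in the monomial basis gives explicit mixed-sign coefficients (signed Stirling numbers of the first kind, after suitable normalization). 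The flexibility in choosing the $h^*$-vector $(h_0,\ldots,h_d)$ then provides enough degrees of freedom, \emph{a priori}, to target any desired sign pattern for the middle coefficients of $E_P(x)$.

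To realize prescribed $h^*$-vectors by honest lattice simplices, I would generalize the construction of the generalized Reeve simplices of Example~\ref{ex:reeve-simplices}: for integers $a_1,\ldots,a_{d-1}\geq 0$ and a large integer $q\geq 1$, consider the lattice simplex
\[\mathrm{conv}\{0,\,e_1,\ldots,\,e_{d-1},\,(a_1,\ldots,a_{d-1},q)\}\subseteq \mathbb{R}^d.\]
Its $h^*$-polynomial can be computed by counting lattice points in the fundamental parallelepiped of the cone over the simplex, yielding a sum of monomials $x^{j}$ indexed by residue classes $r\in\{0,1,\ldots,q-1\}$, where the exponent $j$ depends on the floors of $ra_i/q$. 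By varying the $a_i$'s we can place the ``peaks'' of the $h^*$-polynomial at prescribed positions, and by letting $q\to\infty$ certain peaks can be made arbitrarily large while the rest of the vector remains controlled.

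The heart of the argument is then a Stirling-number estimate: one must identify, for each $d$, a specific realizable $h^*$-vector such that the resulting linear combination of the rows of the matrix
$\bigl[\,\text{coefficient of }x^r\text{ in }\binom{x+d-j}{d}\,\bigr]_{r,j}$
has all of its entries of index $r\in\{1,\ldots,d-2\}$ strictly negative. The main obstacle is that a single-peak $h^*$-polynomial of the form $1+qx^k$ will not suffice: since $\binom{x+d-k}{d}$ has $d$ distinct real roots, its monomial coefficients alternate in sign across the middle degrees, so its $q\to\infty$ contribution cannot simultaneously dominate in the negative direction at every middle position. The fix is to combine several peaks in the $h^*$-vector (with $q$-dependent multiplicities) so that positive contributions from one term cancel against positive contributions from another, leaving a uniformly negative middle band. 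Producing such a combination explicitly, and then checking that the corresponding $h^*$-vector is realizable by a lattice simplex of the form described above, is the delicate step where the actual work of the proof lies.
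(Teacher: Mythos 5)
Your proposal is a strategy sketch, not a proof: the step you flag as ``the delicate step where the actual work of the proof lies'' --- namely, exhibiting a concrete multi-peak $h^*$-vector that (i) makes every middle coefficient of the resulting linear combination of $\binom{x+d-j}{d}$ negative and (ii) is actually realized by a lattice simplex of your prescribed form --- is precisely what the theorem requires, and you stop before doing it. You have correctly diagnosed why the single-peak construction $h^*_P(x)=1+qx^k$ fails (the monomial coefficients of $\binom{x+d-k}{d}$ alternate in sign, so scaling by $q$ cannot push all middle coefficients down simultaneously), but identifying the obstruction is not the same as overcoming it. Nothing in the proposal guarantees the existence of the required combination, nor its realizability: lattice simplices cannot have arbitrary nonnegative $h^*$-vectors (they are constrained by Hibi's and Stanley's inequalities, among others), and you give no argument that your Reeve-type family is flexible enough.

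The paper's own argument is structurally much simpler and sidesteps the $h^*$-vector engineering entirely. It takes the Cartesian product $P_q := [0,d-3]^{d-3}\times\mathcal{R}_q$ of a $(d-3)$-dimensional box with a Reeve tetrahedron. Since Ehrhart polynomials multiply under products,
\[
E_{P_q}(x) \;=\; \bigl((d-3)x+1\bigr)^{d-3}\cdot\Bigl(\tfrac{q+1}{6}x^3 + x^2 + \tfrac{11-q}{6}x + 1\Bigr).
\]
The box factor has positive coefficients not depending on $q$, while the Reeve factor has linear coefficient $\tfrac{11-q}{6}\to -\infty$. For $1\le k\le d-2$ the coefficient of $x^k$ in the product is, up to bounded terms, $\tfrac{q}{6}\bigl(c_{k-3}-c_{k-1}\bigr)$ where $c_j=\binom{d-3}{j}(d-3)^j$, and one checks $c_{k-3}<c_{k-1}$ throughout the middle range; so every middle coefficient becomes negative once $q$ is large. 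No Stirling-number estimates, no control of $h^*$-vectors, and the resulting polytope is a prism rather than a simplex (a restriction you imposed on yourself that is not in the statement and makes the problem strictly harder). If you want to salvage your approach you would have to actually produce the multi-peak $h^*$-vector and verify realizability, which is a nontrivial undertaking; the product construction renders all of that unnecessary.
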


\begin{proof}
    Fix $d\geq 3$ and $q\geq 1$, and consider the $d$-dimensional polytope $P_q:=[0,d-3]^{d-3} \times \mathcal{R}_q$ (cf. Example~\ref{ex:reeve-tetrahedra}). When $q$ is sufficiently large, one can prove that the Ehrhart polynomial:
        \[ E_{P_q}(x) = ((d-3)x+1)^{d-3} \cdot \left(\tfrac{q+1}{6} x^3 + x^2 + \tfrac{11-q}{6} x + 1\right),\]
    has all of its middle coefficients negative.
\end{proof}

\begin{theorem}[{\cite[Theorem~1.2]{hibi-higashitani-yoshida-tsuchiya}}]
    For all $d$ and $k$ with $1 \leq k \leq d-2$, there exists a lattice polytope $P$ such that the coefficient of $x^k$ of $E_P(x)$ is negative and the remaining coefficients are all positive. 
\end{theorem}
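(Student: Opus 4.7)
The plan builds directly on the construction of the previous theorem by refining the product/pyramid scheme so that exactly \emph{one} middle coefficient turns negative. The base building block remains the Reeve tetrahedron $\mathcal{R}_q$ of Example~\ref{ex:reeve-tetrahedra}, whose Ehrhart polynomial has (for $q\ge 12$) a single negative coefficient, located at $x^1$. The natural idea is to combine $\mathcal{R}_q$ with boxes $[0,N]^b$ and pyramid operations so as to obtain a $d$-dimensional polytope in which the negative coefficient can be placed at any prescribed position $k$.

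Concretely, I would consider polytopes of the form
\[ P_{d,k,q,N} \;=\; \mathrm{Pyr}^{c}\!\bigl([0,N]^{b}\times\mathcal{R}_q\bigr), \]
with $b+c=d-3$, where the non-negative integer parameters $b,c,N$ are tuned to the pair $(d,k)$. The Ehrhart polynomial of $[0,N]^{b}\times\mathcal{R}_q$ factors as $(Nx+1)^{b}E_{\mathcal{R}_q}(x)$, and each application of $\mathrm{Pyr}$ replaces a polynomial by its discrete antiderivative $E_{\mathrm{Pyr}(P)}(m)=\sum_{j=0}^{m}E_{P}(j)$. Computing these coefficients explicitly (via Faulhaber-type sums on the pyramid side and ordinary convolution on the product side) shows that, for each $j\in\{1,\ldots,d-1\}$, the coefficient $a_j(q)$ of $x^j$ is an \emph{affine} function of $q$ whose slope is determined by $(b,c,N)$ and has a definite sign. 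Whenever that slope is negative, there is a critical value $q_j^{*}=q_j^{*}(b,c,N)$ at which $a_j$ crosses zero, depending monotonically on the parameters.

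The key claim to establish is: for each $1\le k\le d-2$ there is a choice of $b,c,N$ so that $q_k^{*}<q_j^{*}$ for every $j\in\{1,\ldots,d-1\}\setminus\{k\}$. Once this separation holds, any integer $q$ in the open window $(q_k^{*},\min_{j\neq k}q_j^{*})$ produces a polytope with the desired sign pattern. Informally, pure products ($c=0$) are effective for $k$ close to $d-2$ (the previous theorem already exploited this regime, with \emph{all} middle thresholds crossed simultaneously for $q\gg 0$), whereas iterated pyramids ($b\ll c$) push the lowest coefficient below zero first and are needed when $k$ is small. The explicit sample $\mathrm{Pyr}(\mathcal{R}_q)$, whose Ehrhart polynomial is $\tfrac{q+1}{24}x^4+\tfrac{q+5}{12}x^3+\tfrac{35-q}{24}x^2+\tfrac{25-q}{12}x+1$, already shows that for $25<q<35$ the unique negative coefficient sits at $x^1$, covering the case $(d,k)=(4,1)$. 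Analogous explicit samples handle small cases and suggest the correct parameter dictionary $(d,k)\mapsto(b,c,N)$.

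The main obstacle is verifying the ordering of thresholds uniformly in a convenient parametric family. This reduces to comparing rational expressions built from binomial coefficients, powers of $N$, and Faulhaber numerators and denominators, and likely requires either an induction on $d$ or a case split based on the parity of $d-k$ together with the relative sizes of $b$ and $c$. Specifically, one expects $q_j^{*}$ to grow like $N$ for $j$ in a middle range and to stay bounded at the extremes $j\in\{1,2\}$ and $j=d-1$, so that appropriate choices of $(b,c)$ re-index the extreme thresholds via the Faulhaber transform and place $q_k^{*}$ strictly below the rest. Once these inequalities are pinned down, the construction is simply a choice of sufficiently large $N$ together with any integer $q$ inside the resulting window.
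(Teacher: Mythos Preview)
Your plan is a plausible starting point, and the explicit check for $(d,k)=(4,1)$ is correct. However, what you have written is a sketch rather than a proof: the central step --- showing that for each $(d,k)$ one can choose $(b,c,N)$ so that the threshold $q_k^{*}$ lies strictly below every other $q_j^{*}$ --- is not carried out, and the heuristic you offer for it is in fact false. Consider the pure-pyramid case $b=0$, which you suggest handles small $k$. Since pyramids preserve $h^*$-polynomials, one has
\[
E_{\mathrm{Pyr}^{\,d-3}(\mathcal{R}_q)}(x)=\binom{x+d}{d}+q\binom{x+d-2}{d}.
\]
For $d=5$ a direct expansion gives linear coefficient $(274-6q)/120$ and quadratic coefficient $(225-5q)/120$, so $q_2^{*}=45<q_1^{*}=137/3$. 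Thus as $q$ increases the $x^2$ coefficient turns negative \emph{before} the $x^1$ coefficient, and there is no value of $q$ for which only the linear coefficient is negative. A short computation with harmonic numbers shows the discrepancy only grows with $d$, so iterated pyramids do not ``push the lowest coefficient below zero first'' as you assert.

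This does not rule out that some more elaborate choice of $(b,c,N)$ rescues the case $(5,1)$ and the general small-$k$ regime, but nothing in the proposal indicates how, and you would at minimum need to replace the heuristic ordering of thresholds by an actual argument --- and quite possibly enlarge the family beyond $\mathrm{Pyr}^{c}([0,N]^{b}\times\mathcal{R}_q)$. The present paper does not supply its own proof of this theorem; it refers directly to Section~3 of \cite{hibi-higashitani-yoshida-tsuchiya}, where the construction is carried out and is more delicate than the single uniform product/pyramid scheme you propose.
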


The construction of these polytopes is slightly more involved, so we refer the reader directly to \cite[Section~3]{hibi-higashitani-yoshida-tsuchiya} for the details. We point out that an outstanding conjecture by Hibi, Higashitani, Tsuchiya and Yoshida asserts that \emph{arbitrary} negative patterns may arise.

\begin{restatable}[\cite{hibi-higashitani-yoshida-tsuchiya}]{conjecture}{negativepatterns}
    For every $d\geq 3$, and every $I\subseteq \{0,1,\ldots,d\}$ such that $\{0,d-1,d\}\subseteq I$, there exists a lattice polytope $P$ of dimension $d$, having the property that the non-negative Ehrhart coefficients of $P$ are exactly $[x^i]E_P(x)$ for $i\in I$.
\end{restatable}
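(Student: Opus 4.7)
The plan is to construct, for each admissible $I$, a concrete lattice polytope $P_I$ of dimension $d$ whose Ehrhart polynomial has non-negative coefficients exactly at the positions indexed by $I$. The natural framework is to combine generalized Reeve simplices $\mathscr{R}_{q,k}$ (Example~\ref{ex:reeve-simplices}) with Ehrhart-positive factors such as hypercubes via Cartesian products, exploiting that Ehrhart polynomials multiply: $E_{P\times Q}(x)=E_P(x)E_Q(x)$. A direct computation using $h^*_{\mathscr{R}_{q,k}}(x)=qx^k+1$ gives
\[E_{\mathscr{R}_{q,k}}(x) = \binom{x+2k-1}{2k-1} + q\binom{x+k-1}{2k-1},\]
and, as $q\to\infty$, the second summand dominates. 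Since $\binom{x+k-1}{2k-1}=\frac{x(x^2-1)(x^2-4)\cdots(x^2-(k-1)^2)}{(2k-1)!}$ has alternating-sign coefficients in the monomial basis (supported only on odd powers), these simplices are the primary source of controllable negativity, and the pyramids of Example~\ref{ex:pyramid-hypercube} or similar even-dimensional analogues fill in the "odd/even parity" gaps.

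I would proceed by induction on the number $r := |\{1,\ldots,d-2\}\setminus I|$ of prescribed negative positions. The base case $r=0$ (full Ehrhart positivity) is realized e.g.\ by $[0,1]^d$. For the inductive step, assuming realizability of all patterns of size $r-1$, given a target $I$ of size $r$ obtained by removing one position $k$, one would seek a product
\[ P = \mathscr{R}_{q_1,k_1}\times\cdots\times\mathscr{R}_{q_s,k_s}\times[0,1]^{m} \]
whose Ehrhart polynomial has precisely the required pattern. Each coefficient of such a product is an explicit polynomial in the parameters $q_1,\ldots,q_s$ with coefficients given by shifted Stirling-type numbers, so the locus of parameters realizing a prescribed sign pattern is a semi-algebraic set in $\mathbb{Z}_{>0}^s$, and the existence of $P_I$ reduces to proving that this set contains a lattice point for every admissible $I$.

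The main obstacle is the simultaneous sign control: every Reeve factor perturbs \emph{all} middle coefficients at once, so a naive "add one negative at a time" strategy can flip signs that were previously correct. Two remedies suggest themselves. First, a scaling trick: let $q_i = Q^{\alpha_i}$ and send $Q\to\infty$, so that each coefficient's asymptotic sign is decided by a unique dominant monomial in the $q_i$; one then engineers the exponents $\alpha_i \in \mathbb{Q}_{>0}$ to realize the desired pattern, and finally clears denominators to obtain integer parameters. Second, combining the product construction with the isolated-single-negative polytopes of \cite[Theorem~1.2]{hibi-higashitani-yoshida-tsuchiya} provides additional independent degrees of freedom that pure Reeve products lack, since the latter produce reducible Ehrhart polynomials and the sign pattern of a product is a rigid constraint.

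I expect the hardest patterns to be the highly alternating ones, e.g.\ $I=\{0,2,4,\ldots,d-3,d-1,d\}$ when $d$ is even, because the required $E_P(x)$ must exhibit consecutive sign changes while keeping $a_0,a_{d-1},a_d>0$, forcing the roots of $E_P(x)$ into tight interlacing configurations that are not naturally achievable by products of Reeve simplices and cubes. For those cases, one may need constructions outside the product class --- Cayley sums, free sums, or the higher-dimensional empty simplices à la White's theorem and its Batyrev--Hofscheier generalization --- to obtain the requisite flexibility. A clean inductive proof that handles \emph{every} admissible $I$ uniformly is, in my view, the genuine obstruction, and is likely the reason why the conjecture of Hibi, Higashitani, Tsuchiya and Yoshida has remained open despite the relatively explicit nature of the primitive constructions currently available.
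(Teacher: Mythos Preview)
The statement you are attempting to prove is a \emph{conjecture}, and the paper does not provide a proof of it; it is explicitly listed among the open problems. So there is no ``paper's own proof'' against which to compare your attempt. What the paper does establish are the two extremal cases: all middle coefficients negative (\cite[Theorem~1.1]{hibi-higashitani-yoshida-tsuchiya}, via products of dilated cubes with a Reeve tetrahedron) and exactly one middle coefficient negative (\cite[Theorem~1.2]{hibi-higashitani-yoshida-tsuchiya}). The general sign-pattern conjecture remains open.

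Your write-up is a strategy outline rather than a proof, and you already identify the real obstruction correctly: the simultaneous sign-control problem. Your scaling idea (setting $q_i = Q^{\alpha_i}$ and reading off dominant monomials) is plausible heuristics, but you have not shown that the map from exponent vectors $(\alpha_1,\ldots,\alpha_s)$ to sign patterns is surjective onto the admissible patterns, and there is no reason to expect it is for products of Reeve simplices and cubes alone, precisely because of the reducibility constraint you mention. Your final paragraph essentially concedes this by speculating that non-product constructions would be needed for alternating patterns. In short, your proposal does not close the gap between the known extremal cases and the full conjecture; it is a reasonable discussion of why the problem is hard, but it is not a proof, and the paper makes no claim that one exists.
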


In addition to the existence of such wild examples having negative Ehrhart coefficients, even very simple operations can fail to preserve Ehrhart positivity. The operations of taking pyramids, or taking Minkowski sums do not preserve Ehrhart positivity. For pyramids, it suffices to consider Example~\ref{ex:pyramid-hypercube}: the hypercube $[0,1]^{20}$ has Ehrhart polynomial $(x+1)^{20}$ whereas the pyramid $P_{20}$ over it has an Ehrhart polynomial with a negative coefficient. For Minkowski sums, we refer to some examples constructed by Tsuchiya and discussed in \cite[Section~4.6]{liu-survey}. 

\subsection{Examples and non-examples of Ehrhart positive families}

In order to motivate further discussion regarding Ehrhart positivity, let us mention some families of polytopes for which Ehrhart positivity is known to hold. Most of these were addressed in Liu's survey \cite{liu-survey}, but we take the opportunity to update the list by adding new families for which now this property has been verified.

\begin{theorem}
    The following families of polytopes are known to be Ehrhart positive.
    \begin{enumerate}[\normalfont(i)]
        \item $\mathcal{Y}$-generalized permutohedra, in particular:
            \begin{itemize}
                \item Partial permutohedra $\mathscr{P}(m,n)$ for $n\geq m-1$.
                \item Nestohedra (in particular, graph associahedra).
                \item Fertilitopes.
            \end{itemize}
        \item Hypersimplices (and similar slices of boxes).
        \item Rank $2$ matroid polytopes.
        \item Zonotopes.
        \item Pitman-Stanley polytopes.
        \item Cyclic polytopes (higher integral polytopes).
    \end{enumerate}
\end{theorem}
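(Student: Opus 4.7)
The statement is an omnibus compiling several distinct Ehrhart-positivity results, so the plan is necessarily a family-by-family survey rather than a single unified argument. The unifying thread is to exhibit, for each polytope in the list, an explicit combinatorial formula for each Ehrhart coefficient in which positivity is manifest; no common conceptual reason has been found that would cover all items simultaneously.

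I would begin with $\mathcal{Y}$-generalized permutohedra, which anchor most of the list. The plan is to invoke Postnikov's expression of an integer generalized permutohedron as a signed Minkowski combination of simplices $\Delta_I$, together with his ``draconian sequence'' formula for the volume and, more generally, for mixed volumes of such Minkowski sums. The $\mathcal{Y}$ hypothesis is precisely the requirement that the coefficients in the Minkowski combination are non-negative, which converts the draconian formula into a manifestly non-negative expression for each coefficient $[x^k] E_P(x)$. This gives a uniform argument covering nestohedra (and thus graph associahedra), the Pitman--Stanley polytopes, the partial permutohedra $\mathscr{P}(m,n)$ in the range $n \geq m-1$, and the fertilitopes, once each is identified as a $\mathcal{Y}$-generalized permutohedron in the appropriate range; the latter identifications are the ones I would need to cite (for partial permutohedra this requires the specific bound $n\geq m-1$, outside of which $\mathcal{Y}$-structure can fail).

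For hypersimplices and similar slices of the hypercube, a self-contained route is to use the classical closed formula for the Ehrhart polynomial as a weighted sum of Eulerian numbers (or Lam--Postnikov's explicit enumerative expression), each coefficient being a positive combinatorial quantity; alternatively one notes that hypersimplices are themselves $\mathcal{Y}$-generalized permutohedra. For zonotopes the argument is the cleanest: Stanley's (equivalently McMullen's) formula
\[ E_Z(x) = \sum_{S} m(S)\, x^{|S|},\]
where $S$ ranges over linearly independent subsets of the generators of $Z$ and $m(S)$ is the greatest common divisor of the maximal minors of the matrix of generators in $S$, immediately shows positivity because $m(S) \geq 1$ for every independent $S$. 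For rank $2$ matroid polytopes, I would invoke the explicit decomposition of these polytopes (as in Ferroni's work) into hypersimplices with non-negative coefficients. For cyclic polytopes of higher integral type, I would cite Liu's formula expressing the Ehrhart coefficients as positive combinations of Stirling numbers.

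The main obstacle is the $\mathcal{Y}$-generalized permutohedron step, since it subsumes several of the subfamilies and relies on Postnikov's mixed-volume machinery together with the non-trivial fact that non-negative Minkowski coefficients transfer to non-negative Ehrhart coefficients via draconian sequences. The remaining items rest on \emph{ad hoc} explicit formulas available in the literature; the difficulty there is not technical but bibliographic, namely tracking down the correct reference and checking that each family really lies in the stated class (most delicately the range $n \geq m-1$ for $\mathscr{P}(m,n)$ and the $\mathcal{Y}$-structure of fertilitopes, both of which are recent additions to the picture).
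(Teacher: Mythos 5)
Your overall survey structure mirrors the paper's approach exactly: the paper states the theorem and simply refers the reader to Section~\ref{sec:six} for citations, and you have tracked down the right ones for most items --- Postnikov's draconian/mixed-volume formula \cite{postnikov} for $\mathcal{Y}$-generalized permutohedra and the families subsumed by it (nestohedra, fertilitopes, partial permutohedra in the stated range, Pitman--Stanley polytopes), the McMullen--Stanley gcd-of-minors formula for zonotopes, Ferroni--Jochemko--Schr\"oter for rank~$2$ matroid polytopes, and Liu's projection-volume description for cyclic polytopes.

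There is, however, a genuine gap in your treatment of hypersimplices, where both of the routes you propose are incorrect. The Lam--Postnikov Eulerian-number formula computes the \emph{$h^*$-vector} of $\Delta_{k,n}$, not its Ehrhart coefficients, and non-negativity of the $h^*$-vector (which holds for \emph{every} lattice polytope by Stanley's theorem) says nothing about Ehrhart positivity. The actual closed expression for $E_{\Delta_{k,n}}(t)$, due to Katzman \cite{katzman}, is an inclusion--exclusion sum with alternating signs and is far from manifestly positive. Your alternative claim, that hypersimplices are $\mathcal{Y}$-generalized permutohedra, is false for $1<k<n-1$: those $\Delta_{k,n}$ do not admit non-negative Minkowski decompositions into standard simplices $\Delta_I$, which is precisely why Postnikov's theorem does not cover them and why Ehrhart positivity of hypersimplices required a separate, nontrivial proof, given by Ferroni in \cite{ferroni1} (with the ``similar slices of boxes'' extension in \cite{ferroni-mcginnis,mcginnis}). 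Without citing these results, item~(ii) of the theorem is not established by your plan.
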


We refer to Section~\ref{sec:six} for more details about the above polytopes as well as references for the proofs. The reader should not be misguided by the above list, often it is the case that for small dimensional examples of polytopes belonging to a family of interest, Ehrhart positivity does hold, yet for sufficiently large dimensional polytopes within the family, counterexamples arise. Following this yoga, let us mention in detail a couple of negative results that encompass many others --- e.g., that quadratic triangulations do not guarantee Ehrhart positivity, and other statements of similar flavor. The first one is that matroid polytopes are not Ehrhart positive in general. This was a conjecture posed by De~Loera, Haws and K\"oppe in \cite{deloera-haws-koppe} and disproved in \cite{ferroni3}; we take the opportunity to give an outline of the proof for readers whose background is leaned towards polyhedral geometry rather than matroid theory.

\begin{theorem}[\cite{ferroni3}]\label{thm:matroid-negative}
    There exist matroid polytopes with Ehrhart polynomials having a negative coefficient.
\end{theorem}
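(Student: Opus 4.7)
The plan is to exhibit an explicit infinite family of matroid polytopes whose Ehrhart polynomials can be written in closed form, and then to extract a coefficient that becomes negative for sufficiently large values of a parameter. A natural candidate is the family of \emph{minimal matroids} $T_{k,n}$, i.e.\ the connected matroids of rank $k$ on $[n]$ with the minimum possible number of bases (equal to $n-k+1$). Since the hypersimplex (the polytope of a uniform matroid) is already known to be Ehrhart positive, one has to look at matroids that are in some sense ``far'' from uniform; minimal matroids are an opposite extreme in that they have very few bases, yet their polytopes are still $(n-1)$-dimensional when the matroid is connected.

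First, I would make the combinatorial structure of $P(T_{k,n})$ explicit: its bases are easy to list (they form a ``snake'' of sets obtained by slicing a specific interval), and this gives a description of the polytope either as a nested-sets type generalized permutohedron or, equivalently, as a Minkowski sum of a simplex and a few hypersimplex pieces. The key structural fact I would exploit is that passing from $T_{k,n-1}$ to $T_{k,n}$ amounts to a pyramid-like construction at the polytope level, which yields a recursion of the form
\[
E_{P(T_{k,n})}(x) \;=\; E_{P(T_{k,n-1})}(x) \;+\; (\text{correction polynomial in } x),
\]
where the correction is an easily computed Ehrhart polynomial of a lower-dimensional face. Iterating gives $E_{P(T_{k,n})}(x)$ as a telescoping sum whose closed form is a bivariate polynomial in $x$ and $n$.

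Second, with a closed formula in hand, I would read off the coefficient of a particular power $x^j$ as an explicit polynomial $c_j(n)$ in $n$. It then suffices to identify some small $k$ and some $j$ (for instance $k=3$ and $j=1$ or $j=2$) for which the leading term of $c_j(n)$ in $n$ has negative sign; once this is established, $c_j(n)<0$ for all sufficiently large $n$, producing an infinite family of matroid polytopes violating Ehrhart positivity and proving the theorem.

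The main obstacle is the closed-form Ehrhart computation itself: no general formula is known for Ehrhart polynomials of matroid polytopes (this is precisely what makes Ehrhart positivity for matroids plausible in small cases and hard to refute in general). The reason minimal matroids are tractable is that their combinatorial geometry is simple enough that either a direct lattice-point count, a unimodular decomposition into products of simplices and hypersimplices, or Postnikov's expansion for generalized permutohedra can be carried out. A secondary subtlety is that one must confirm the negativity \emph{symbolically}, not merely numerically; this is achieved by writing $c_j(n)$ as a rational polynomial in $n$ and comparing its leading coefficient to the competing contributions, which is a manageable elementary check once the recursion has been solved.
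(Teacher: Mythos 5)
Your proposal is built on the minimal matroid family $T_{k,n}$, and you intend to solve a telescoping recursion for $E_{P(T_{k,n})}(x)$ and then find a coefficient whose leading term in $n$ is negative. This cannot work: the minimal matroids are in fact Ehrhart \emph{positive}. The paper you implicitly draw on for the recursion structure (\cite{ferroni2}) is precisely where this positivity is proved — that paper's main result is a closed formula for $E_{P(T_{k,n})}(x)$ together with a proof that all of its coefficients are non-negative. So no choice of $k$ and $j$ will make your $c_j(n)$ eventually negative; the search step at the end of your argument has no solution.

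The actual mechanism in the paper's proof (from \cite{ferroni3}) runs in the \emph{opposite} direction. Rather than building a polytope up from small bases counts, one starts at the top — the hypersimplex $\Delta_{k,n}$, which has $\binom{n}{k}$ vertices — and repeatedly \emph{deletes} vertices while keeping the result a matroid polytope. Each deletion subtracts the same fixed correction polynomial $D_{k,n}(t-1)$ from the Ehrhart polynomial (this correction polynomial is the quantity computed in \cite{ferroni2} and is closely tied to the minimal matroid's Ehrhart polynomial, which is why your intuition pointed there). The crucial extra ingredient, which your sketch is missing, is a combinatorial fact from \cite{ferroni3}: one can carry out about $\lfloor\frac{1}{n}\binom{n}{k}\rfloor$ successive vertex deletions from $\Delta_{k,n}$, always remaining a matroid polytope. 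Subtracting that many copies of $D_{k,n}(t-1)$ from $E_{\Delta_{k,n}}(x)$ eventually overwhelms a fixed coefficient of the hypersimplex (the hypersimplex's Ehrhart coefficients are known explicitly), and already for $k=3$ and $n=4000$ the quadratic coefficient goes negative. The sign change comes entirely from this accumulated subtraction; there is no family of matroid polytopes whose Ehrhart polynomial you compute via a positive telescoping sum and then find a negative coefficient in — that approach is self-defeating since each summand is non-negative.
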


\begin{proof}
    Matroid polytopes are by definition the $0/1$-polytopes all of whose edges are of the form $e_i-e_j$. In \cite{ferroni2} it is proved that if $P\subseteq \mathbb{R}^n$ is a matroid polytope and $P'\subseteq \mathbb{R}^n$ is a matroid polytope obtained by ``deleting'' one vertex of $P$, then the Ehrhart polynomials of both $P$ and $P'$ differ by a fixed polynomial. Precisely:
        \[ E_{P}(x) = E_{P'}(x) + D_{k,n}(t-1),\]
    where $D_{k,n}(t)$ is a polynomial depending only of the ambient dimension $n$ and the value $k$ determined by the hyperplane $x_1+\cdots+x_n=k$ in which both $P$ and $P'$ lie. An explicit expression for the polynomial $D_{k,n}(t)$ is obtained in \cite{ferroni2}. In \cite{ferroni3} it is proved that actually for every $k$ and $n$, the hypersimplex $\Delta_{k,n}$ is a matroid polytope admitting the removal of $\lfloor\frac{1}{n}\binom{n}{k}\rfloor$ vertices, one at each step, always preserving the edge directions. For every $k$ and $n$, if one performs such removals, the end product is a matroid polytope $P$ whose Ehrhart polynomial is:
        \[ E_P(x) = E_{\Delta_{k,n}}(x) - \left\lfloor\frac{1}{n}\binom{n}{k}\right\rfloor D_{k,n}(t-1).\]
    Since the Ehrhart polynomial of the hypersimplex can be calculated explicitly \cite{katzman,ferroni1,ferroni-mcginnis}, one can compute in a fast way the polynomial $E_P(x)$. For sufficiently large choices of the parameters $n$ and $k$, one can show that $E_P(x)$ can have negative coefficients. In particular, $n = 4000$ and $k=3$ provides a rank $3$ connected matroid on $4000$ elements and having a negative quadratic Ehrhart coefficient; this is a $3999$-dimensional polytope.
\end{proof}

The second important non-example for Ehrhart positivity is that of order polytopes (and hence, alcoved polytopes). This was studied in detail by Liu and Tsuchiya in \cite{liu-tsuchiya}

\begin{theorem}[\cite{liu-tsuchiya}]\label{thm:alcoved-non-ehr-pos}
    There exist alcoved polytopes (actually, order polytopes) with Ehrhart polynomials having negative coefficients.
\end{theorem}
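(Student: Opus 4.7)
The plan is to exploit the well-known combinatorial description of Ehrhart polynomials of order polytopes in terms of $P$-partitions. For a finite poset $P$ on $d$ elements, the order polytope $\mathcal{O}(P) \subseteq \mathbb{R}^d$ has Ehrhart polynomial
\[
E_{\mathcal{O}(P)}(m) \;=\; \sum_{\sigma} \binom{m + d - \mathrm{des}(\sigma)}{d},
\]
where the sum ranges over linear extensions $\sigma$ of $P$ and $\mathrm{des}(\sigma)$ is the number of descents of $\sigma$. Equivalently, the $h^*$-polynomial of $\mathcal{O}(P)$ is the $P$-Eulerian polynomial $W_P(x) = \sum_\sigma x^{\mathrm{des}(\sigma)}$. (Since order polytopes are alcoved, this will simultaneously establish the parenthetical claim.)

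Next, I would expand each $\binom{m+d-j}{d}$ in the monomial basis using Stirling numbers of the first kind. This yields linear formulas
\[
a_k \;=\; \sum_{j=0}^{d} h_j^* \, c_{k,j,d}, \qquad k=0,1,\ldots,d,
\]
for each coefficient $a_k$ of $E_{\mathcal{O}(P)}(x)$, where the constants $c_{k,j,d}$ are explicit rational numbers depending only on $k$, $j$, $d$, and — crucially — for $j$ large relative to $k$ some of the $c_{k,j,d}$ are \emph{negative}. Thus, if we can produce a poset $P$ whose Eulerian polynomial is concentrated on linear extensions with many descents, the negative contributions for some middle coefficient $a_k$ can dominate.

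The next step is to exhibit such a poset explicitly. Natural candidates are posets that admit many linear extensions with an atypically large proportion of descents, e.g.\ \emph{fence posets} (zigzag shape) or \emph{inflations} of a small base poset by a chain; in both cases the $P$-Eulerian polynomial can be computed in closed form (the fence case is classical, giving tangent/secant-type generating functions). One then chooses parameters growing with $d$ and reads off the sign of, say, $a_1$ or $a_2$. Alternatively — and this is in the spirit of Liu--Tsuchiya — one can perform a finite computer search over posets on $d \le 8$ elements; explicit small counterexamples do emerge, and a direct substitution into the formula above verifies the claim.

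The main obstacle is not the machinery but the \emph{identification} of a concrete poset: for very small $d$ the negative contributions in the Stirling expansion are outweighed by the large leading $h_j^*$'s coming from low-descent linear extensions (chains and antichains yield cubes and simplices, both Ehrhart positive). One therefore needs either a family with an analyzable asymptotic, or a targeted search. Once the example is in hand, the proof is a routine computation of $E_{\mathcal{O}(P)}(x)$ via the displayed formula followed by inspection of its coefficients.
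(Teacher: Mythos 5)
Your setup — expressing the Ehrhart polynomial of $\mathcal{O}(P)$ through $P$-partitions and then expanding the binomial coefficients in the monomial basis to see when negative Stirling-number contributions can dominate — is a perfectly reasonable framework, and you are right that order polytopes are alcoved so one example certifies both claims. But your proposal stops short of actually producing a poset, and that is the entire content of the theorem. You acknowledge this yourself (``the main obstacle is not the machinery but the identification of a concrete poset''), so what you have is a plan, not a proof.

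Two of your concrete suggestions are also shaky. First, the claim that ``explicit small counterexamples do emerge'' from a search over posets on $d\le 8$ elements appears to be false: the example in the paper (following Liu and Tsuchiya) is the \emph{claw} poset with one minimum covered by $n$ maxima, and its order polytope first exhibits a negative Ehrhart coefficient at $n\ge 20$, i.e.\ in dimension $21$. Small order polytopes are quite tame, so a search in dimension $\le 8$ would almost certainly come up empty. Second, your heuristic that one should look for posets whose Eulerian polynomial is ``concentrated on linear extensions with many descents'' points in the wrong direction for this example: the claw poset has $h^*$-polynomial equal to the classical Eulerian polynomial $A_n(x)$ — symmetric, real-rooted, unimodal, nothing pathological. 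The negativity here is a phenomenon of high dimension interacting with Stirling numbers of the first kind, not of a skewed descent distribution.

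The paper's own proof is much more direct: it observes that the order polytope of the claw poset (one element below $n$ incomparable elements) is exactly $\mathrm{Pyr}([0,1]^n)$, the pyramid over the hypercube treated in Example~\ref{ex:pyramid-hypercube}, whose Ehrhart polynomial is the shifted Faulhaber polynomial $\sum_{j=0}^{m}(j+1)^n$. It then quotes the earlier observation that for $n\ge 20$ this polynomial has a negative coefficient. To turn your proposal into a proof you would need to either exhibit this poset and verify the sign of a coefficient, or supply a concrete small example together with the computation — as it stands, the key step is missing.
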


\begin{proof}
    The pyramid over a hypercube of dimension $n$ (see Example~\ref{ex:pyramid-hypercube}) corresponds to a poset of the form depicted in Figure~\ref{fig:stanley-poset}.
\begin{figure}[ht]
    \centering
	\begin{tikzpicture}  
	[scale=0.7,auto=center,every node/.style={circle,scale=0.8, fill=black, inner sep=2.7pt}] 
	\tikzstyle{edges} = [thick];
	
	\node[] (a1) at (0,0) {};  
	\node[fill=white,scale=1.3] (a2) at (0,2)  {$\qquad\cdots$};  
	\node[] (a4) at (2,2) {};
	\node[] (a5) at (3,2)  {};  
	\node[] (a6) at (-1,2)  {};  
	\node[] (a7) at (-2,2)  {};  
	\node[] (a8) at (-3,2)  {};  
	\node[] (a9) at (-4,2) {};
	
	\draw[edges] (a1) -- (a4);  
	\draw[edges] (a1) -- (a5);  
	\draw[edges] (a1) -- (a6);
	\draw[edges] (a1) -- (a7);  
	\draw[edges] (a1) -- (a8);  
	\draw[edges] (a1) -- (a9);  
	\end{tikzpicture}\caption{One bottom element, covered by $n$ elements.}\label{fig:stanley-poset}
\end{figure}
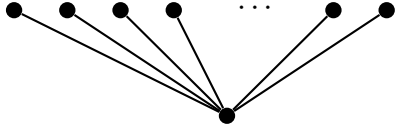

    \noindent
    These posets were described by Stanley in \cite[Exercise~3.164]{stanley-ec1} and further studied by Liu and Tsuchiya in \cite{liu-tsuchiya}. When $n\geq 20$, the Ehrhart polynomials of the order polytopes of these posets have negative coefficients.
\end{proof}

\begin{remark}
    We comment that even smooth polytopes fail to be Ehrhart positive in general. An example of this phenomenon is constructed in \cite{castillo-liu-nill-paffenholz} by Castillo, Liu, Nill and Paffenholz, answering the Ehrhart positivity question raised by Bruns in \cite[Question~7.1]{bruns-quest}.
\end{remark}

Although the two mentioned instances, namely matroid polytopes and alcoved polytopes, do not possess the property of Ehrhart positivity, there is an outstanding conjecture that predicts that it will certainly hold within the \emph{intersection} of these two classes. 

\begin{restatable}[\cite{ferroni-jochemko-schroter}]{conjecture}{positroidsehrhartpositive}\label{conj:positroids-ehr-pos}
    Alcoved matroid polytopes, also known as \emph{positroids}, are Ehrhart positive.
\end{restatable}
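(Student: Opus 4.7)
The plan is to approach this conjecture through the McMullen--Berline--Vergne framework. For any lattice polytope $P$, the Ehrhart polynomial admits a decomposition
\[ E_P(n) = \sum_{F \leq P} \alpha(F,P)\, \mathrm{vol}(nF),\]
where $F$ ranges over faces of $P$ and $\alpha(F,P)$ is a valuation depending only on the normal cone at $F$. Berline and Vergne produced an explicit, canonical, rational-valued $\alpha$. My strategy would be to establish $\alpha(F,P) \geq 0$ for every face $F$ of every positroid polytope $P$, which immediately yields Ehrhart positivity.

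First I would exploit the rich combinatorial structure of positroid polytopes: every face of a positroid polytope is itself, up to relabelling, a product of positroid polytopes of lower dimension. Combined with the fact that $\alpha(F,P)$ depends only on the normal cone, this reduces the problem to understanding the normal cones at the \emph{vertices} of positroids, which are cyclically structured and governed by Grassmann necklaces, decorated permutations, or $\mathrm{Le}$-diagrams.

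Next, I would try to leverage the Ehrhart positivity of $\mathcal{Y}$-generalized permutohedra (listed among the known Ehrhart positive families) together with the Castillo--Liu technique that decomposes such polytopes into simplices with prescribed, sign-coherent Berline--Vergne contributions. Because positroid polytopes are not $\mathcal{Y}$-generalized permutohedra in general (for instance, the octahedron $U_{2,4}$ is a positroid polytope but not a $\mathcal{Y}$-generalized permutohedron), the required Minkowski decomposition will involve cancellations; one would then need to argue that the resulting signed sum of $\alpha$-values remains non-negative. As an alternative intermediate step, one might verify the conjecture for specific sub-families where the structure is most tractable: Schubert matroids, shard polytopes, rank $2$ positroids, and then attempt an induction along the positroid stratification.

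The main obstacle is that for general matroid polytopes, the Berline--Vergne coefficients can be negative, as is implicit in the construction behind Theorem~\ref{thm:matroid-negative}: there one produces matroids supported on the same hypersimplex as positroids, yet with a negative Ehrhart coefficient. Consequently, any successful proof must invoke a genuinely positroid-specific feature, for example total non-negativity of a defining matrix, the cyclic interval structure of the bases, or the Le-diagram parametrization of local cones. I expect the crux of the argument to lie in identifying the correct combinatorial invariant attached to a face that certifies the positivity of $\alpha(F,P)$, and in verifying this certificate uniformly across all positroids; absent such a combinatorial certificate, the standard Berline--Vergne machinery alone will not distinguish positroids from arbitrary matroids.
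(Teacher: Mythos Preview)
The statement you are addressing is Conjecture~\ref{conj:positroids-ehr-pos}, which the paper presents as an \emph{open conjecture}, not a theorem; there is no proof in the paper to compare your proposal against. The paper only records partial evidence (hypersimplices, rank~$2$ matroids, minimal matroids) and related conjectures (cuspidal matroids, panhandle matroids), and explicitly lists the conjecture again in the final summary of open problems.

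Your proposal is therefore not a proof but a research outline, and you seem aware of this: you correctly identify the Berline--Vergne $\alpha$-positivity route, you correctly note that faces of positroid polytopes are again (products of) positroid polytopes so that the problem reduces to vertex cones, and you correctly flag the central obstruction---that Theorem~\ref{thm:matroid-negative} shows $\alpha$-positivity fails for general matroids, so any argument must exploit something genuinely positroid-specific. However, none of the steps you sketch actually closes the gap. The suggestion to leverage the Castillo--Liu decomposition for $\mathcal{Y}$-generalized permutohedra does not apply, as you yourself observe; the alternative of ``arguing that the resulting signed sum of $\alpha$-values remains non-negative'' is precisely the conjecture restated, not a mechanism for proving it. The inductive approach along the positroid stratification likewise lacks a base-to-inductive-step bridge: knowing Ehrhart positivity for Schubert matroids or shard polytopes does not, by any known technique, propagate to arbitrary positroids. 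In short, your outline is a reasonable survey of the landscape around an open problem, but it contains no new idea that would separate positroids from the matroids of Theorem~\ref{thm:matroid-negative}, and so it does not constitute a proof.
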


Positroid polytopes encompass important classes of polytopes, such as hypersimplices, series-parallel matroid polytopes, shard polytopes \cite{padrol-pilaud-ritter}, rank $2$ matroid polytopes, lattice path matroid polytopes \cite{knauer-martinez-ramirez}, and Schubert matroid polytopes \cite{fan-li}. There is some evidence supporting Conjecture~\ref{conj:positroids-ehr-pos}, including the cases of hypersimplices \cite{ferroni1}, rank $2$ matroid polytopes \cite{ferroni-jochemko-schroter}, and ``minimal'' matroids \cite{ferroni2}. On the other hand, there are some other conjectures related to the above. The \emph{cuspidal} matroids \cite{ferroni-schroter} correspond to polytopes defined by four parameters, $k,n,r,s$ and described as follows:
    \[ P = \left\{ x\in [0,1]^n : \sum_{i=1}^n x_i = k, \; \sum_{i=1}^r x_i \leq s\right\}.\]
These polytopes are special cases of positroids, and they were conjectured to be Ehrhart positive by Fan and Li in \cite[Conjecture~1.6]{fan-li} (they call them ``notched rectangle matroids''). In \cite{hanely-et-al} Hanely et al. address the case $s=1$ of the above family; they named \emph{panhandle matroids} the corresponding subfamily of polytopes. They conjectured the Ehrhart positivity for $s=1$ in \cite[Conjecture~6.1]{hanely-et-al}, and actually propose a concrete combinatorial description of the coefficients. The Ehrhart positivity for $r=2$ and $s=1$ was recently settled by McGinnis in \cite{mcginnis}, via a non-trivial combinatorial interpretation of the Ehrhart coefficients.

Two further families for which Ehrhart positivity has been checked extensively using computers are that of edge polytopes and symmetric edge polytopes. It would be of much interest for the community to find proofs or counterexamples for the following two (a priori unrelated) conjectures.

\begin{restatable}{conjecture}{edgesymedgeehrpos}
    Let $G$ be a graph.
    \begin{enumerate}[\normalfont(i)]
        \item The edge polytope of $G$ is Ehrhart positive.
        \item The symmetric edge polytope of $G$ is Ehrhart positive.
    \end{enumerate}
\end{restatable}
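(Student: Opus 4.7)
The plan is to attack each part by first reducing to a minimal graph class via standard graph-theoretic decompositions, then verifying the conjecture by explicit computation on base cases, and finally using induction or structural arguments to cover the remaining cases. For both polytopes, the identity $E_{P\times Q}(x) = E_P(x)E_Q(x)$ shows that the product of Ehrhart-positive polytopes is Ehrhart positive, so a first reduction is to the case where $G$ is connected. For edge polytopes, a bridge in $G$ would ideally allow splitting $P_G$ along a pyramid-like construction; however, pyramids do not preserve Ehrhart positivity in general (cf.~Example~\ref{ex:pyramid-hypercube}), so any reduction along bridges must leverage the specific structure of edge polytopes rather than rely on generic pyramid arguments.

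For part (i), I would first compute $E_{P_G}(x)$ explicitly for atomic classes such as trees (for which $P_G$ is integrally equivalent to a unimodular simplex, hence trivially Ehrhart positive), cycles of both parities, complete graphs $K_n$, and complete bipartite graphs $K_{m,n}$. For bipartite $G$, the edge polytope is an integral slice of the $0/1$-box lying in a fixed affine hyperplane, and its lattice point counts are governed by $0/1$-matrices with prescribed row and column sums; this makes it plausible to mimic the permanent-type arguments used for hypersimplices. For non-bipartite graphs, the polytope is full-dimensional and odd cycles contribute extra vertices, so I would try a deformation back to the bipartite case, tracking how the middle Ehrhart coefficients change as odd cycles are introduced.

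For part (ii), the symmetric edge polytope $\mathcal{A}_G$ is always reflexive and its $h^*$-polynomial is known to be $\gamma$-positive in many instances (trees, complete bipartite graphs, and so on). My attempt would exploit the regular unimodular triangulations coming from edge orderings of $G$ to express $E_{\mathcal{A}_G}(x)$ via a combinatorial model that is manifestly positive, perhaps counting certain orientations or sub-forests of $G$ weighted by size. Alternatively, one could try to translate a $\gamma$-positive expression of $h_{\mathcal{A}_G}^*(x)$ into Ehrhart coefficients and show that the $\gamma$-coefficients suffice to witness Ehrhart positivity, provided that the required change of basis preserves positivity on each individual contribution.

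The hard part, as is typical in Ehrhart positivity, will be controlling the middle coefficients. McMullen's formula expresses each of them as a sum over faces involving Todd-class-like contributions that may have either sign, so termwise positivity is hopeless. A successful proof will therefore require a genuinely new combinatorial interpretation of $[x^k]E_P(x)$ for intermediate $k$, presumably in terms of graph invariants such as spanning subgraphs, orientations, or proper edge colorings. In view of the cautionary examples of matroid polytopes (Theorem~\ref{thm:matroid-negative}) and order polytopes (Theorem~\ref{thm:alcoved-non-ehr-pos}), which fail Ehrhart positivity despite strong combinatorial structure, one cannot expect a soft proof: any successful approach will have to exploit the specific graph-theoretic structure encoded in edge and symmetric edge polytopes in an essential way, rather than rely on features shared with more general polytope families.
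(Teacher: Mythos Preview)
The statement you are trying to prove is stated in the paper as an open \emph{conjecture}, not as a theorem. The paper offers no proof; it only remarks that both parts have been verified by exhaustive computation for all graphs on at most nine vertices, and that a proof or counterexample ``would be of much interest for the community.'' There is therefore no proof in the paper to compare your proposal against, and your write-up is, as you yourself acknowledge in the final paragraph, a research outline rather than an argument.

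That said, one step in your outline is already wrong and would need to be repaired before the strategy could even begin. You claim that the identity $E_{P\times Q}(x)=E_P(x)E_Q(x)$ reduces both conjectures to connected $G$. But for a disconnected graph $G=G_1\sqcup G_2$, neither the edge polytope nor the symmetric edge polytope of $G$ is a \emph{Cartesian product} of the corresponding polytopes for $G_1$ and $G_2$: in both cases the polytope of $G$ is the convex hull of the two component polytopes sitting in complementary coordinate subspaces, which is a \emph{join} (free sum), not a product. For joins it is the $h^*$-polynomials that multiply, not the Ehrhart polynomials, and Ehrhart positivity is not preserved under joins in general --- the pyramid over the $20$-cube in Example~\ref{ex:pyramid-hypercube}, which you yourself cite, is exactly a join with a point and already fails Ehrhart positivity. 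So the very first reduction in your plan does not go through.
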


Partial evidence for the first of these two conjectures is a result by Ferroni, Jochemko and Schr\"oter \cite{ferroni-jochemko-schroter} which proves that Ehrhart positivity holds for complete multipartite graphs (these are polytopes of rank $2$ matroids). Notice that edge polytopes are the subpolytopes of second hypersimplices. We have checked that the above conjectures hold for all graphs on at most $9$ vertices by an exhaustive computation.

\subsection{Real-rootedness of Ehrhart polynomials}

A question raised by Postnikov at the MSRI Workshop on Geometric and Topological Combinatorics in 2017 was whether there is any relationship between the positivity of the Ehrhart coefficients with the unimodality of the $h^*$-polynomial. In \cite{liu-solus} Liu and Solus addressed several interesting examples that show that in general there is no relationship between these two notions --- i.e., they provide examples of polytopes having both, exactly one or none of these two properties. 

In spite of the absence of a general connection between the two notions, we will prove some results that are quite useful in many instances, and establish a link between these two notions. The following result is new.

\begin{theorem}\label{thm:ehr-real-rooted-implies-h*-real-rooted}
    If $P$ is Ehrhart positive and the Ehrhart polynomial of $P$ is real-rooted, then the $h^*$-polynomial is real-rooted too (in particular, it is log-concave and unimodal).
\end{theorem}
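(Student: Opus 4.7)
The plan is to exploit the factored form of the Ehrhart polynomial afforded by the two hypotheses, and to propagate real-rootedness through Wagner's Theorem~\ref{thm:wagner}. First I would note that the conjunction of Ehrhart positivity and real-rootedness of $E_P(x)$ is equivalent to the existence of a factorization
\[
E_P(x) \,=\, a_d\prod_{i=1}^d(x+\beta_i), \qquad a_d>0,\ \beta_i>0,
\]
since a real-rooted polynomial with positive leading coefficient has all non-negative coefficients if and only if its roots are non-positive, and $E_P(0)=1$ excludes any root at zero.

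The key input is the Wagner image of a single linear factor: for any $\beta>0$ the generating-function identity $\sum_{m\geq 0}(m+\beta)\,x^m = \bigl(\beta+(1-\beta)\,x\bigr)/(1-x)^{2}$ yields $(\mathcal{W}(x+\beta))(x) = \beta+(1-\beta)\,x$, a polynomial of degree at most one, hence trivially real-rooted. Using that $\mathcal{W}$ is homogeneous of degree one in $f$, the plan is to apply Wagner's Theorem~\ref{thm:wagner} inductively to the partial products $f_{1}\cdots f_{k-1}$ and $f_k$ with $f_i(x)=x+\beta_i$, propagating real-rootedness through the factorization of $E_P(x)/a_d$ to conclude that $h^*_P(x)=a_d\cdot\mathcal{W}(f_1\cdots f_d)(x)$ is real-rooted. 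Once that is achieved, the non-negativity of the $h^*$-vector established by Stanley forces all roots of $h^*_P(x)$ to be non-positive, and log-concavity and unimodality then follow from the implications recalled in the introduction.

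The main obstacle is that Wagner's Theorem~\ref{thm:wagner} requires each factor's Wagner image to have \emph{only negative} real roots, whereas $\beta+(1-\beta)x$ has a negative root only when $\beta\in(0,1)$; it is a nonzero constant when $\beta=1$ (vacuously admissible), but it has a positive root when $\beta>1$. The delicate point is therefore the treatment of the factors with $\beta_i>1$, which are not ruled out by the hypotheses, as the standard simplex $\Delta^d$ with roots $-1,-2,\ldots,-d$ already illustrates. A natural way to circumvent this is to group such ``bad'' factors into products whose combined Wagner image is a nonzero constant or is negative real-rooted; for instance, a direct computation yields $\mathcal{W}\bigl((x+1)(x+2)\bigr)(x)=2$, so factors corresponding to $\beta=1,2$ can be absorbed together into a single block whose Wagner image is vacuously negative real-rooted, after which Wagner's theorem again applies and the iterative argument goes through.
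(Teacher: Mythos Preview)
There is a genuine gap. You correctly identify the obstacle: Wagner's theorem requires each factor to have a \emph{negative} real-rooted $\mathcal{W}$-image, and $(\mathcal{W}(x+\beta))(x)=\beta+(1-\beta)x$ fails this for $\beta>1$. Your proposed fix, however, is only an example, not an argument. Grouping $(x+1)(x+2)$ handles exactly the roots $1$ and $2$; it says nothing about an integer root $\beta=3$, and more seriously nothing about \emph{non-integer} roots $\beta>1$, which the hypotheses do not exclude. No systematic grouping is given, and ad hoc attempts fail: for instance $\mathcal{W}\bigl((x+1)(x+2)(x+\beta)\bigr)=2\beta+(6-2\beta)x+2(3-\beta)x^3$, which for $\beta\in(2,3)$ has a pair of complex roots.

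There is a deeper reason the approach cannot be completed as written: it never uses that $E_P$ is an Ehrhart polynomial beyond $E_P(0)=1$. If your inductive Wagner scheme could be made to work for any factorization $a_d\prod_i(x+\beta_i)$ with $\beta_i>0$, it would prove that every real-rooted polynomial with positive coefficients and constant term $1$ has a real-rooted $\mathcal{W}$-image. That is false, as the paper itself notes in the remark following the proof: $f(x)=\tfrac{43}{6}x^3+\tfrac{41}{2}x^2+\tfrac{46}{3}x+1$ is real-rooted with positive coefficients, yet $(\mathcal{W}f)(x)=1+40x+x^2+x^3$ is not real-rooted. Any correct proof must therefore exploit an Ehrhart-specific constraint on the $\beta_i$, which your outline does not.

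The paper's argument supplies exactly that missing constraint via reciprocity: if $\lambda$ is the largest integer with $E_P(-\lambda)=0$, one shows $(-1)^dE_P(-u)>0$ for all real $u\ge\lambda+1$, so every root of $E_P$ lies in $(-\lambda-1,0)$ and the negative integers $-1,\ldots,-\lambda$ are all roots. This is precisely the hypothesis of a theorem of Brenti (\cite[Theorem~4.4.1]{brenti}), which then yields that the $h^*$-vector is a P\'olya frequency sequence, hence $h^*_P(x)$ is real-rooted.
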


\begin{proof}
    Denote $E_P(x)$ the Ehrhart polynomial of the $d$-dimensional polytope $P$. By the reciprocity law, for each positive integer $m$, we have that $|E_P(-m)|$ counts the number of lattice points lying in the relative interior of $mP$. In particular, $E_P(-\ell)=0$ for all integers $1\leq \ell \leq \lambda$, where $\lambda$ is the smallest non-negative integer such that $\lambda P$ has no interior lattice points but $(\lambda+1)P$ does. In particular, it is known that this implies that $h_{d}=h_{d-1}=\cdots=h_{d+1-\lambda}=0$ and $h_{d-\lambda}>0$. 

    Let us show now that for all real numbers $u \geq \lambda + 1$, we have that $E_P(-u) \neq 0$. Again, using the reciprocity law we have:
        \[ (-1)^d E_P(-u) = \sum_{i=1}^{d+1}h_{d+1-i}\binom{u+d-i}{d}= \sum_{i=\lambda+1}^{d+1}h_{d+1-i}\binom{u+d-i}{d}.\]
    Notice that the assumption $u\geq \lambda+1$ yields:
    \begin{align*}
        (-1)^d E_P(-u) &= \sum_{i=\lambda+1}^{d+1}h_{d+1-i}\binom{u+d-i}{d}\\
        &= h_{d-\lambda} \binom{u-(\lambda+1)+d}{d} + h_{d-\lambda-1} \binom{u-(\lambda+2)+d}{d} + \cdots + h_0 \binom{u-1}{d}\\
        &\geq h_{d-\lambda} \binom{u-(\lambda+1)+d}{d}\\
        &>0.
    \end{align*}
    It follows that whenever $\lambda$ is the largest integer for which $E_P(-\lambda)=0$, then the largest real number $\rho$ for which $E_P(-\rho)=0$ satisfies $\lambda \leq \rho < \lambda+1$. On the other hand, since our polytope is assumed to be Ehrhart positive, we are guaranteed that all the real roots of $E_P(x)$ are negative real numbers.
    
    In \cite[Theorem~4.4.1]{brenti} Brenti proved that under these assumptions (i.e., that a negative real-rooted polynomial vanishes at all negative integers with absolute value smaller than the ``largest'' negative real root),  
    if we perform the change of basis:
        \[ E_P(x) = \sum_{i=0}^d h_i \binom{x+d-i}{d},\]
    then the sequence $(h_0,h_1,\ldots,h_d)$ is a P\'olya frequency sequence; in turn, by a classical result in \cite{aissen-schoenberg-whitney}, the polynomial $h_0+h_1x+\cdots + h_d x^d$ is real-rooted. Of course, the coefficients $h_0,\ldots, h_d$ arising by the above change of basis match with the $h^*$-vector of $P$.
\end{proof}

\begin{remark}
    The preceding proof relies on the fact that the polynomial $E_P(x)$ is an Ehrhart polynomial in an essential way. In other words, it is not true that if $f(x)$ is a real-rooted polynomial of degree $d$ having positive coefficients, then the polynomial $W(x):=(\mathscr{W}f)(x)$, defined as in equation~\eqref{eq:wagner}, is real-rooted as well. Even if one asks that $W(x)$ has positive coefficients, this fails to be true. For instance, the polynomial $f(x) = \frac{43}{6}x^3+\frac{41}{2}x^2+\frac{46}{3}x+1$ has positive coefficients and is real-rooted whereas, correspondingly, $W(x) = 1+40x+x^2+x^3$ is not. In Brenti's notation \cite{brenti}, this shows that
        \[ \mathrm{PF}[x^i] \cap \mathrm{PF}_1 \left[\binom{x+d-i}{d}\right] \not\subseteq \mathrm{PF} \left[\binom{x+d-i}{d}\right],\]
    so one should view Theorem~\ref{thm:ehr-real-rooted-implies-h*-real-rooted} as a \emph{little Ehrhartian miracle}. In the general polynomial setting, an important result due to Brenti \cite[Theorem~4.4.4]{brenti} says that if $f(x)$ has only real zeros, and all of them lie in the interval $[-1,0]$, then $\mathscr{W}f$ is real-rooted.
\end{remark}

Although Ehrhart positivity does not hold for all polytopes, it happens to be true quite frequently. Therefore, one can use the preceding statement as a tool for discharging the problem of proving the unimodality (or log-concavity, or real-rootedness) of the $h^*$-polynomial to that of the real-rootedness of the Ehrhart polynomial. Before going to examples in which this situation applies, let us stress the fact that the Ehrhart positivity condition is essential, as the following result shows.

\begin{theorem}
    There exists a lattice polytope $P$ such that $E_P(x)$ is real-rooted, yet the $h^*$-polynomial is not real-rooted (in fact, not even unimodal).
\end{theorem}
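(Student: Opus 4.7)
The plan is to exhibit $P = \mathcal{R}_{34}$, the Reeve tetrahedron from Example~\ref{ex:reeve-tetrahedra} with parameter $q = 34$. The entire argument amounts to reading off the two data points already recorded in that example, so the proof reduces to a couple of sentences rather than a construction from scratch.

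First, I would recall that $E_{\mathcal{R}_{34}}(x) = \tfrac{35}{6}x^3 + x^2 - \tfrac{23}{6}x + 1$, which after clearing denominators factors as $\tfrac{1}{6}(5x-2)(7x-3)(x+1)$. Hence $E_P(x)$ is real-rooted, with roots $-1$, $2/5$, and $3/7$. The appearance of two positive roots confirms that $P$ is \emph{not} Ehrhart positive — indeed, the linear coefficient $-23/6$ is negative — so Theorem~\ref{thm:ehr-real-rooted-implies-h*-real-rooted} does not apply, which is precisely what makes this example viable.

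Second, I would recall from Example~\ref{ex:reeve-tetrahedra} that $h^*_{\mathcal{R}_{34}}(x) = 1 + 34x^2$, so the $h^*$-vector of $P$ is $(1,0,34,0)$. This sequence is not unimodal: for unimodality one would need an index $j$ with $h_0 \leq \cdots \leq h_j \geq \cdots \geq h_d$, and the only plausible peak position is $j = 2$, but the inequality $h_0 \leq h_1$ already fails since $1 > 0$. Non-unimodality of course forces non-real-rootedness (the two roots of $1+34x^2$ are the purely imaginary numbers $\pm i/\sqrt{34}$), giving both assertions of the statement.

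There is no real obstacle to this approach; the only subtlety worth flagging is conceptual, namely the remark in Theorem~\ref{thm:ehr-real-rooted-implies-h*-real-rooted}'s proof that Ehrhart positivity is \emph{essential} in passing from real-rootedness of $E_P(x)$ to real-rootedness of $h^*_P(x)$. The Reeve tetrahedron $\mathcal{R}_{34}$ is essentially the minimal witness to this phenomenon: it is the smallest $q$ for which $E_{\mathcal{R}_q}$ is real-rooted, and the $h^*$-vector $(1,0,q,0)$ arising from Reeve tetrahedra is non-unimodal for every $q \geq 1$.
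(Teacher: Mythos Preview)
Your proof is correct and essentially identical to the paper's own argument: both take $P=\mathcal{R}_{34}$, factor $E_P(x)=\tfrac{35}{6}(x+1)(x-\tfrac{3}{7})(x-\tfrac{2}{5})$, and observe that $h^*_P(x)=1+34x^2$ is not unimodal. One small inaccuracy in your closing commentary: $q=34$ is not the smallest $q$ with $E_{\mathcal{R}_q}$ real-rooted (already $q=0$ gives $E_{\mathcal{R}_0}(x)=\tfrac{1}{6}(x+1)(x+2)(x+3)$); what is true is that it is the smallest $q$ for which the real-rootedness coexists with a non-unimodal $h^*$-vector in the way needed here, but this remark is peripheral to the proof itself.
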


\begin{proof}
    An example in dimension $3$ is obtained by taking the Reeve tetrahedron, as in Example~\ref{ex:reeve-tetrahedra}, for $q=34$. The Ehrhart polynomial is \[E_{\mathscr{R}_{34}}(x)= \tfrac{35}{6}\,x^3 + x^2-\tfrac{23}{6}\,x+1=\tfrac{35}{6}(x+1)\left(x-\tfrac{3}{7}\right)\left(x-\tfrac{2}{5}\right),\]
    which is real-rooted, but has a negative coefficient. Observe that we have $h_{\mathscr{R}_{34}}^*(x) = 34x^2+1$, which is not unimodal.
\end{proof}

\begin{theorem}
    The following families of polytopes are Ehrhart positive and their Ehrhart polynomials are real-rooted.
    \begin{enumerate}[\normalfont(i)]
        \item Rectangular prisms.
        \item Weakly increasing parking function polytopes $\mathcal{X}_n^w(a,b)$.
    \end{enumerate}
    Hence, their $h^*$-polynomials are real-rooted.
\end{theorem}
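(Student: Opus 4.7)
The plan is to invoke Theorem~\ref{thm:ehr-real-rooted-implies-h*-real-rooted} in both cases, which reduces the whole statement to exhibiting, for each family, an explicit factorization of the Ehrhart polynomial as a product of linear factors in $x$ having only negative real roots. Since a product of such linear factors automatically has strictly positive coefficients (by Vi\`eta) and is real-rooted by inspection, Ehrhart positivity and Ehrhart real-rootedness will come out simultaneously, and then Theorem~\ref{thm:ehr-real-rooted-implies-h*-real-rooted} delivers the $h^*$-real-rootedness for free.

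For (i), let the rectangular prism be $P = [0,a_1]\times\cdots\times[0,a_d]$ with $a_1,\dots,a_d\in\mathbb{Z}_{>0}$. Since the Ehrhart polynomial is multiplicative under Cartesian products, one has
\[ E_P(x) = \prod_{i=1}^{d}(a_i x + 1), \]
whose roots are $-1/a_1,\dots,-1/a_d$, all negative real numbers. Hence $E_P(x)$ is real-rooted and all its coefficients are positive.

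For (ii), the strategy is to cite (and, if space allows, reproduce) the explicit Ehrhart polynomial of the weakly increasing parking function polytope $\mathcal{X}_n^w(a,b)$, which is known to factor in the form
\[ E_{\mathcal{X}_n^w(a,b)}(x) \;=\; \prod_{i=1}^{n}\bigl(\alpha_i\, x + 1\bigr), \]
for suitable positive rationals $\alpha_i$ depending on $n$, $a$, $b$. This is the main technical input, and establishing (or quoting) this product form is where the actual work sits: in principle one checks it by counting lattice points in $m\mathcal{X}_n^w(a,b)$ via the characterization of weakly increasing parking functions as integer sequences $0\leq c_1\leq c_2\leq\cdots$ satisfying certain linear inequalities, which after a change of variables becomes a product of independent one-dimensional enumerations. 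Once this factorization is in hand, the roots are again negative real numbers, so the Ehrhart polynomial is real-rooted with positive coefficients.

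Finally, for both families, Ehrhart positivity plus Ehrhart real-rootedness together satisfy the hypotheses of Theorem~\ref{thm:ehr-real-rooted-implies-h*-real-rooted}, which immediately yields the real-rootedness (and therefore log-concavity and unimodality) of $h^*_P(x)$. The main obstacle, and the only non-formal part of the argument, is the product formula for $E_{\mathcal{X}_n^w(a,b)}(x)$ in case (ii); case (i) is essentially immediate from the multiplicativity of Ehrhart polynomials under products, together with $E_{[0,a]}(x)=ax+1$.
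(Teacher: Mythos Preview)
Your proposal is correct and follows essentially the same route as the paper. The paper does not give a formal proof environment for this theorem: for (i) it relies on the evident factorization $E_P(x)=\prod_i(a_i x+1)$ for a product of segments (mentioned in the discussion right after the statement), and for (ii) it simply cites \cite[Corollary~4.4]{vindas} for the real-rootedness of $E_{\mathcal{X}_n^w(a,b)}(x)$, noting in Section~\ref{sec:six} that $\mathcal{X}_n^w(a,b)$ is a Pitman--Stanley polytope and hence Ehrhart positive; then Theorem~\ref{thm:ehr-real-rooted-implies-h*-real-rooted} is invoked exactly as you do. The only place you go slightly beyond the paper is in asserting a specific product form $\prod_i(\alpha_i x+1)$ for case (ii) and sketching a direct lattice-point count; the paper avoids this by citing the result, and since you yourself flag this as the one non-formal step, the cleanest fix is to do the same and cite \cite{vindas} rather than re-derive it.
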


A further instance in which this result can be applied is to prove the real-rootedness of the biEulerian polynomial introduced in the work of Ardila; see the proof of \cite[Theorem~6.3]{ardila-bipermutohedron}. On the other hand, notice that the case of rectangular prisms (i.e., products of pairwise orthogonal segments) is of enough interest on its own, as this shows (once again) that the $h^*$-polynomial of the cube $[0,1]^n$ (i.e., the Eulerian polynomial) is real-rooted, as well as the $h^*$-polynomial of the cube $[0,2]^n$ (i.e., the Eulerian polynomial of type B). Various other combinatorial polynomials arise in this way (for more examples, see~\cite[Section~4.9]{savage-schuster}). Furthermore, an example that comprises all the ones discussed in this paragraph are the $r$-colored Eulerian polynomials, whose real-rootedness also follows from Theorem~\ref{thm:ehr-real-rooted-implies-h*-real-rooted}.

Now, let us finish the discussion on roots of Ehrhart polynomials by mentioning certain pathological properties that they may have. Of course, a condition that the Ehrhart polynomial $E_P(x)$ of a lattice polytope $P$ must satisfy is that $1=E_P(0)\leq E_P(1) \leq E_P(2) \leq \ldots\,$. In particular, we observe that positive integers are \emph{never} roots of an Ehrhart polynomial. However, the following statement, which is a modification of another appearing in \cite[Theorem~1.3]{beck-loera-develin-pfeifle-stanley}, shows that positive real roots of Ehrhart polynomials can be quite wild, in the sense that one can have \emph{large} real positive roots and that they can be \emph{very close} to being an integer.

\begin{theorem}
    For every $\varepsilon > 0$, there exists a sufficiently large positive integer $d$ and a polytope $P$ of dimension $d$ having the following property: the Ehrhart polynomial $E_P(x)$ admits real positive roots $r_1 < r_2 < \cdots < r_s$, where each $|r_i - i| < \upvarepsilon$ and $s\approx \frac{d}{2\pi e}$.
\end{theorem}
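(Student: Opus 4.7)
The plan is to invoke the generalized Reeve simplices $\mathscr{R}_{q,k}$ from Example~\ref{ex:reeve-simplices}, whose $h^*$-polynomial is $1+qx^k$ in dimension $d=2k-1$. Expanding in the binomial basis gives the explicit Ehrhart polynomial
\[
E_{\mathscr{R}_{q,k}}(x) \;=\; \binom{x+2k-1}{2k-1} \;+\; q\binom{x+k-1}{2k-1}.
\]
The first summand has only the negative integer roots $-1,\ldots,-(2k-1)$, while the second factors as $\tfrac{1}{(2k-1)!}\prod_{j=-(k-1)}^{k-1}(x-j)$ and therefore carries exactly $k-1$ simple positive \emph{integer} roots, located at $1,2,\ldots,k-1$. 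This is the observation that drives everything: the ``large-$q$'' limit of $E_{\mathscr{R}_{q,k}}(x)$ is a polynomial that already has the required number of positive real roots, placed precisely at the right integers.

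To exploit this, I would normalize and pass to the limit. The coefficientwise limit $\lim_{q\to\infty}\tfrac{1}{q}E_{\mathscr{R}_{q,k}}(x) = \binom{x+k-1}{2k-1}$ has $2k-1$ \emph{simple} real roots, so by continuous dependence of simple roots on coefficients, for every $\varepsilon>0$ there exists $q_0=q_0(\varepsilon,k)$ such that for all $q\geq q_0$ the polynomial $E_{\mathscr{R}_{q,k}}(x)$ has $2k-1$ real roots lying within $\varepsilon$ of the limit roots. In particular its $k-1$ positive real roots $r_1<\cdots<r_{k-1}$ satisfy $|r_i-i|<\varepsilon$ for every $i\in\{1,\ldots,k-1\}$. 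Setting $s:=k-1$ yields a polytope of dimension $d=2s+1$ meeting the conclusion, with $s=(d-1)/2$, which for large $d$ comfortably exceeds $d/(2\pi e)$. If the precise asymptotic $s\approx d/(2\pi e)$ is desired rather than a lower bound, one simply takes a cartesian product with a unimodular simplex $\Sigma_m$, whose Ehrhart polynomial $\binom{x+m}{m}$ contributes only negative real roots; this inflates the dimension to $d=2k-1+m$ while keeping $s$ fixed, so $m$ can be tuned to match any target ratio $s/d\leq 1/2$.

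The delicate point is the continuity-of-roots step, since a small perturbation of the coefficients could in principle push a simple real root off the real axis. This will be resolved by a standard Rouché argument: enclose each of the $2k-1$ distinct limit roots in a disjoint disk of radius $\varepsilon$ centred on the real axis. For $q$ sufficiently large, $E_{\mathscr{R}_{q,k}}(x)/q$ differs from $\binom{x+k-1}{2k-1}$ negligibly on the boundary of each disk, so each disk contains exactly one root of $E_{\mathscr{R}_{q,k}}(x)$. Any non-real root would have to coexist with its complex conjugate in the same disk (since $E_{\mathscr{R}_{q,k}}$ has real coefficients and the disks are centred on the real axis), contradicting uniqueness; hence the single root in each disk is forced to be real, and the desired configuration of nearly-integer positive real roots is achieved.
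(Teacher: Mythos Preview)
Your argument is correct and takes a genuinely different route from the paper's. The paper uses a single fixed family with no free parameter: it takes $P$ to be the pyramid over the cube $[0,1]^{d-1}$ (Example~\ref{ex:pyramid-hypercube}), observes that for odd $d$ the Ehrhart polynomial is essentially a Bernoulli polynomial, and then reads off the conclusion from results of Veselov and Ward on the real zeros of Bernoulli polynomials, which cluster near consecutive half-integers up to height roughly $d/(2\pi e)$. Your construction instead exploits the tunable parameter $q$ in the generalized Reeve simplices $\mathscr{R}_{q,k}$ and replaces the number-theoretic input by an elementary Rouch\'e/continuity argument. What you gain: the proof is self-contained, avoids invoking any external analytic results, and actually produces $s\approx d/2$ positive near-integer roots, a strictly better ratio than $d/(2\pi e)$ (which you then deliberately degrade via the simplex factor to hit the stated asymptotic). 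What the paper's approach gains: the pathology is exhibited by a single natural polytope for each dimension rather than by sending a parameter to infinity, and the constant $1/(2\pi e)$ emerges from the analysis rather than being imposed.
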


\begin{proof}
    The polytope $P_d$ described in Example~\ref{ex:pyramid-hypercube} has Ehrhart polynomial
        \[ E_{P_d}(m) = \sum_{j=1}^{m+1} j^d .\]
    The right-hand side is a shifted evaluation of a Faulhaber polynomial. For odd values $d=2n+1$, one has $E_{P_d}(x) = \frac{1}{d} B_{d}(x)$, where $B_d(x)$ denotes the $d$-th \emph{Bernoulli polynomial} (see \cite[Theorem~2.5]{beck-robins}). The Bernoulli polynomials make a prominent appearance in number theory and complex analysis, they are notorious for having a ``sinusoidal'' behavior near the first positive integers. Its complex roots are well understood; the main results of \cite{veselov-ward} imply that for odd values $d=2n+1$, the polynomial $B_d(x)$ has real roots close to the half integers $\frac{3}{2}, \frac{4}{2}, \frac{5}{2},\ldots, \frac{m}{2}$ where $m\approx \frac{d}{2\pi e}$. Moreover, the distance of the roots to these integers approaches $0$ as the dimension $d$ grows.
\end{proof}

\subsection{CL-polytopes}

We now discuss a family of reflexive polytopes whose Ehrhart polynomial satisfies a particular property on its roots. A reflexive polytope $P$ is said to be a \emph{CL-polytope} if all of complex roots of the Ehrhart polynomial $E_P(x)$ lie on the so-called \emph{critical line} $\Re(z) = -\frac{1}{2}$. Univariate polynomials all of whose complex roots have negative real part are usually called \emph{Hurwitz stable} polynomials. Of course, the definition of CL-polytopes implies that their Ehrhart polynomials are Hurwitz stable.

\begin{lemma}
    If $Q(x)$ is a Hurwitz stable polynomial with real coefficients, then the coefficients of $Q(x)$ are all non-negative. In particular, all CL-polytopes are Ehrhart positive.
\end{lemma}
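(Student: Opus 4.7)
The plan is to prove the lemma by factoring $Q(x)$ over $\mathbb{R}$ according to its roots, and checking that each factor already has the desired sign pattern.

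First I would write $Q(x) = c \prod_i (x - \rho_i)$ where $\rho_i$ ranges over the complex roots counted with multiplicity and $c$ is the leading coefficient. Since $Q(x)$ has real coefficients, the non-real roots occur in conjugate pairs, so I can group the factors into real linear pieces and real quadratic pieces, obtaining a factorization
\[ Q(x) = c \cdot \prod_{j} (x + a_j) \cdot \prod_{k} \bigl(x^2 + 2\alpha_k x + (\alpha_k^2 + \beta_k^2)\bigr), \]
where $-a_j$ runs over the real roots and $-\alpha_k \pm i\beta_k$ runs over the pairs of complex conjugate roots. The Hurwitz stability assumption $\mathrm{Re}(\rho_i) < 0$ translates exactly into $a_j > 0$ and $\alpha_k > 0$. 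Consequently each linear factor $(x+a_j)$ and each quadratic factor $x^2 + 2\alpha_k x + (\alpha_k^2 + \beta_k^2)$ has strictly positive coefficients.

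Next I would observe that the product of two polynomials with non-negative coefficients again has non-negative coefficients (every coefficient of the product is a sum of products of non-negative numbers). An immediate induction on the number of factors shows that the whole product above has non-negative coefficients. Multiplying by the leading coefficient $c$ preserves this provided $c > 0$.

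Finally, to deduce the Ehrhart-positivity statement, I would apply the lemma to $Q(x) = E_P(x)$. Recall that the top Ehrhart coefficient equals $\mathrm{vol}(P) > 0$, so the leading coefficient of $E_P(x)$ is positive; moreover a CL-polytope is by definition reflexive with all complex roots of $E_P(x)$ on the line $\mathrm{Re}(z) = -\tfrac{1}{2}$, which is Hurwitz stable. Hence the hypotheses of the lemma apply and all coefficients of $E_P(x)$ are non-negative, i.e., $P$ is Ehrhart positive. There is no genuine obstacle here; the only subtlety worth flagging is that one must account for the leading coefficient's sign, which is why Ehrhart positivity (and not just ``positive leading coefficient $\Rightarrow$ positive coefficients'') is the right conclusion.
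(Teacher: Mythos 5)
Your proof is correct and follows essentially the same route as the paper's: factor $Q(x)$ over $\mathbb{R}$ into linear and quadratic real factors coming respectively from the real roots and the conjugate pairs, observe that the Hurwitz condition forces each factor to have non-negative coefficients, and conclude by multiplying. The one place where you are actually a bit more careful than the paper is the leading coefficient: the paper writes $Q(x) = A\prod(\cdots)$ and simply says ``the result follows,'' silently assuming $A>0$, whereas you flag that the product argument only gives non-negativity of $Q$ up to the sign of the leading coefficient, and then note that in the CL-polytope application one has $[x^d]E_P(x) = \vol(P) > 0$, so the sign is indeed positive. This is a worthwhile remark, since without a positive leading coefficient the first sentence of the lemma is false as literally stated (e.g.\ $-x-1$ is Hurwitz stable with all coefficients negative); in practice ``Hurwitz stable'' usually carries a positive-leading-coefficient normalization, which is exactly the point you raise.
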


\begin{proof}
    Since $Q(x)$ is assumed to have real coefficients, its roots are either negative real numbers, or complex numbers whose conjugate is also a root. In other words, we have:
    \[ Q(x) = A(x-r_1)\cdots (x-r_a) (x-z_1)(x-\overline{z_1}) \cdots (x-z_b)(x-\overline{z_b}),\]
    where $r_1,\ldots,r_a$ are negative real numbers and $z_1,\ldots, z_b$ are complex numbers having negative real part. The factor $(x-r_1)\cdots (x-r_a)$ is a product of polynomials having positive coefficients, hence it has positive coefficients. Similarly, each factor $(x-z_i)(x-\overline{z_i})$ can be written as $x^2 - 2\Re(z_i) x + |z_i|^2$, and therefore has non-negative coefficients. The result follows.
\end{proof}

\begin{theorem}
    The following families of polytopes are known to be CL-polytopes.
    \begin{itemize}
        \item Standard reflexive simplices. 
        \item Cross polytopes and their dual polytopes $[-1,1]^d$. 
        \item Root polytopes of type A and their dual polytopes. 
        \item Root polytopes of type C and their dual polytopes. 
        \item Dual polytopes of associahedra. 
        \item Symmetric edge polytopes of complete bipartite graphs $K_{a,b}$ with $\min\{a,b\} \leq 3$. 
    \end{itemize}
    In particular, they are Ehrhart positive.
\end{theorem}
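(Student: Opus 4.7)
My plan is to verify, for each family listed, that every complex root of the Ehrhart polynomial has real part exactly $-\tfrac12$; the Ehrhart positivity conclusion is then immediate from the preceding lemma. A useful structural observation is that the reflexivity of a polytope $P$ of dimension $d$ gives the functional equation $E_P(x) = (-1)^d E_P(-1-x)$, equivalent to $h^*_P(x)$ being palindromic. Consequently the roots of $E_P(x)$ come naturally in pairs $\{r, -1-r\}$, and the CL property reduces to showing that each such pair is already a complex-conjugate pair, i.e.\ that each root $r$ satisfies $r + \bar{r} = -1$.

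For the hypercube $[-1,1]^d$ the Ehrhart polynomial is $(2x+1)^d$, so every root equals $-\tfrac12$. For the standard reflexive simplex with $h^*_P(x) = 1+x+\cdots+x^d$, the Ehrhart polynomial is the telescoping sum $\binom{x+d+1}{d+1} - \binom{x}{d+1}$; the CL property here follows from a classical argument comparing $|(x+d+1)\cdots(x+1)|$ with $|x(x-1)\cdots(x-d)|$ and observing that the equality forced by $E_P(x)=0$ only holds on the vertical line $\mathrm{Re}(x)=-\tfrac12$. For the cross polytope $\diamondsuit_d$ one has $E_{\diamondsuit_d}(x) = \sum_{k=0}^d \binom{d}{k}\binom{x}{k}$, a one-parameter family whose critical-line property is known from the literature on polynomials with roots on vertical lines; it can be proved via a hypergeometric rewriting together with a separating-zeros argument.

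For the root polytopes of types A and C (and their duals) and for the duals of associahedra, one uses the known explicit formulas for their Ehrhart polynomials; in each case one rewrites the polynomial in a form that reveals a factorization into real Hurwitz-stable quadratics of the shape $x^2 + x + c_j$ with $c_j > 0$, which forces all roots onto the critical line. For the symmetric edge polytopes of $K_{a,b}$ with $\min\{a,b\}\leq 3$, the $h^*$-polynomial is known in closed form; substituting into $E_P(x) = \sum_j h^*_j \binom{x+d-j}{d}$ and simplifying yields a polynomial whose factorization into stable quadratics can be checked directly for $\min\{a,b\}\in\{1,2,3\}$.

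The main obstacle is this last family, where one must establish, uniformly in the remaining parameter, that an \emph{entire} one-parameter family of polynomials has all roots on the critical line. A single stray root would spoil the whole statement, so the proof typically proceeds either by producing an \emph{explicit} factorization into stable quadratics, or by a continuity-plus-degree-counting argument anchored in a specific algebraic identity (such as one arising from a generating-function manipulation on $K_{a,b}$). This is also where the threshold $\min\{a,b\}\leq 3$ originates: for $\min\{a,b\}\geq 4$, numerical experiments already indicate that the roots leave the line $\mathrm{Re}(z)=-\tfrac12$, so the CL property genuinely breaks down and no uniform factorization can exist.
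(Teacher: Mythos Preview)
Your proposal is correct in spirit and, like the paper's own treatment, ultimately defers the harder cases to external references. The main difference is in the first two items. The paper handles both the standard reflexive simplex and the cross polytope in one stroke by invoking Rodr\'iguez-Villegas's theorem: if all roots of $h^*_P(x)$ lie on the unit circle in $\mathbb{C}$, then $P$ is a CL-polytope. Since $h^*_P(x)=1+x+\cdots+x^d$ for the reflexive simplex and $h^*_P(x)=(1+x)^d$ for the cross polytope, both have their $h^*$-roots on the unit circle and the CL property follows immediately. Your telescoping argument for the simplex is essentially a direct specialization of what underlies Rodr\'iguez-Villegas's result, so it is valid but less economical; your treatment of the cross polytope via a ``hypergeometric rewriting together with a separating-zeros argument'' is too vague to stand on its own, whereas the unit-circle criterion dispatches it in one line.

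For the remaining families (root polytopes of types A and C and their duals, dual associahedra, and symmetric edge polytopes of $K_{a,b}$), the paper simply cites the relevant papers of Higashitani--Kummer--Micha{\l}ek and Higashitani--Yamada. Your proposed route --- explicit factorization into quadratics $x^2+x+c_j$ with $c_j\ge \tfrac14$ --- is logically equivalent to the CL property itself, so asserting that such a factorization ``can be checked'' is not a proof step but a restatement of the goal. In particular, for the $K_{a,b}$ family you correctly identify the difficulty (uniformity in the unbounded parameter) but do not supply an argument; the actual proofs in the literature proceed via interlacing techniques and specific algebraic identities rather than by exhibiting a factorization. So your sketch is not wrong, but it does not add content beyond what the paper's citations already provide, and it misses the clean unifying tool (the unit-circle criterion for $h^*$) that the paper uses for the easy cases.
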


By the main result of Rodr\'iguez-Villegas in \cite{rodriguez-villegas}, we know that if all roots of $h_P^*(x)$ lie on the unit circle of the complex plane, then $P$ is a CL-polytope. In particular, the CL-ness of standard reflexive simplices (whose $h^*$-polynomial is $x^d+\cdots+x+1$) and cross polytopes (whose $h^*$-polynomial is $(x+1)^d$) follows. 
On the other hand, since $E_{[-1,1]^d}(x)=(2x+1)^d$, the CL-ness of $[-1,1]^d$ also directly follows. 
For the CL-ness of the other polytopes above, see Section~\ref{sec:six}. 

Let us mention that a further interesting class of CL-polytopes are certain reflexive simplices considered by Braun and Liu in \cite{braun-liu}; these belong to an even larger class of reflexive polytopes that can be used to construct pathological $h^*$-vectors; many properties of these polytopes are better expressed by arithmetic (rather than combinatorial) considerations, see \cite{braun-davis-solus}.

We recall once again that Remark~\ref{rem:unimod} guarantees that a CL-polytope of dimension $d\leq 5$ will have a unimodal $h^*$-polynomial. It is tempting to postulate that within this restricted class of reflexive polytopes the unimodality of $h^*$-polynomials might continue to hold for $d\geq 6$. The following shows that it is not the case.

\begin{theorem}
    For any $d \geq 6$ there exists a CL-polytope of dimension $d$ whose $h^*$-polynomial is non-unimodal. 
\end{theorem}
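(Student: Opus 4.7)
The plan is to handle the base case $d=6$ directly, and then to extend to higher $d$ by exhibiting analogous reflexive simplices dimension by dimension.

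For $d=6$, I would take $P_0$ to be the Musta\c{t}\v{a}--Payne reflexive $6$-simplex constructed in the preceding theorem, whose palindromic $h^*$-polynomial
\[ h^*_{P_0}(x) = 1+x+2x^2+x^3+2x^4+x^5+x^6 \]
is already shown there to be non-unimodal. It remains to verify that $P_0$ is CL, and for that I would invoke the Rodríguez-Villegas criterion cited just above: it suffices to check that every complex root of $h^*_{P_0}(x)$ lies on the unit circle. Since $h^*_{P_0}$ is palindromic of even degree, the reciprocal substitution $y = x + x^{-1}$ applied to $x^{-3} h^*_{P_0}(x)$ converts it into the cubic
\[ q(y) = y^3 + y^2 - y - 1 = (y-1)(y+1)^2, \]
whose roots $y = 1$ and $y = -1$ (double) all lie in $[-2, 2]$. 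By the standard equivalence for palindromic polynomials this places all roots of $h^*_{P_0}$ on the unit circle, so $P_0$ is CL.

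For each $d > 6$, my plan is to exhibit a reflexive $d$-polytope $P_d$ whose palindromic $h^*$-polynomial is non-unimodal and has all complex roots on the unit circle, which then certifies CL via Rodríguez-Villegas. A concrete candidate is the one-parameter family of reflexive simplices
\[ S_{d,k} := \mathrm{conv}\bigl(e_1, \ldots, e_d,\, -(e_1 + \cdots + e_{d-1} + k\, e_d)\bigr) \subseteq \mathbb{R}^d, \]
which a direct computation of their primitive facet normals shows to be reflexive precisely when $k$ divides $d$ (the case $(d,k) = (6,3)$ recovers $P_0$). The $h^*$-polynomial of $S_{d,k}$ can be read off from lattice counts in the fundamental parallelepiped of a vertex-translate, and is automatically palindromic of degree $d$. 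For each $d \geq 6$ one then selects $k = k_d$ so that the resulting palindromic polynomial dips in the middle while its $y$-transform still splits with all roots in $[-2, 2]$. If the simplex family is insufficient in some dimension (for instance when $d$ is prime, so only $k = 1$ or $k = d$ are available), one can fall back on Payne's dimensional analogues in \cite{payne} of reflexive polytopes with non-unimodal $h^*$-polynomial, and independently verify the unit-circle criterion for them.

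The main obstacle is preserving non-unimodality while maintaining the unit-circle condition. A naive inductive bootstrap from $P_0$ via free sums $P_0 \oplus Q$ — which satisfy $h^*_{P_0 \oplus Q}(x) = h^*_{P_0}(x)\cdot h^*_Q(x)$ — appears hopeless, because the reflexive $Q$ whose $h^*$-polynomial has all roots on the unit circle are essentially built from the standard reflexive simplex ($h^*(x) = 1+x+\cdots+x^k$) and the cross polytope ($h^*(x) = (1+x)^k$), and convolution with these smooth palindromic factors generically erases the dip of $h^*_{P_0}$: for instance, $h^*_{P_0}(x)(1+x) = 1+2x+3x^2+3x^3+3x^4+3x^5+2x^6+x^7$ is already unimodal. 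Consequently the argument cannot proceed by such a generic bootstrap, and the technical core is the dimension-by-dimension verification that the palindromic $h^*$-polynomial of the chosen explicit $P_d$ admits a $y$-transform whose roots all lie in $[-2,2]$ while the polynomial itself retains a middle dip.
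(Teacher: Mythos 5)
Your handling of $d=6$ is correct and matches the paper's spirit: you correctly compute that the $y$-transform of the Musta\c{t}\v{a}--Payne $h^*$-polynomial factors as $(y-1)(y+1)^2$ with all roots in $[-2,2]$, which places the roots of $h^*$ on the unit circle, and then Rodr\'iguez-Villegas gives CL. You also make a genuinely useful observation that a free-sum bootstrap fails because convolution with $1+x+\cdots+x^k$ or $(1+x)^k$ erases the dip; this is a real obstruction worth noting.

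However, for $d>6$ the proposal has a genuine gap. You do not exhibit any verified family: the candidate $S_{d,k}$ is introduced with the unproven claim that reflexivity holds iff $k\mid d$, no computation of its $h^*$-polynomial is given, no condition on $k$ guaranteeing a middle dip is established, and you explicitly concede that primes $d$ may force a fallback to unspecified \emph{Payne dimensional analogues}. In short, the plan says ``select $k_d$ so that the polynomial dips and the $y$-transform has real roots in $[-2,2]$'' without showing such a $k_d$ exists, which is precisely the theorem to be proved. The paper instead leverages \cite[Example~4.2]{payne}: for all $b\geq 3$, $r\geq 0$, $k\geq r+2$ there is a reflexive $(bk+r)$-simplex whose $h^*$-polynomial equals $\left(\sum_{i=0}^{k+r}x^i\right)\left(\sum_{j=0}^{b-1}x^{kj}\right)$. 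This product form simultaneously delivers both halves of what you need: the two factors are quotients of binomials $x^a-1$ so all roots are roots of unity (hence CL via Rodr\'iguez-Villegas), and by reading off the original form $1+x+\cdots+x^d + (1+\cdots+x^r)(x^k+\cdots+x^{(b-1)k})$ one sees immediately that the coefficient vector is a string of $1$'s with isolated blocks bumped to $2$ (using $k\geq r+2$), hence non-unimodal. The set of realizable $d=bk+r$ is $\{d\geq 6\}\setminus\{7,11\}$, and the two sporadic cases are covered by Payne's Examples~4.3 and 4.4 together with a computer check of the critical-line property. Unless you supply the analogue of this uniform construction, your argument only establishes the case $d=6$.
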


\begin{proof}
    As mentioned above, whenever the $h^*$-polynomial has all of its complex roots on the unit circle, then the Ehrhart polynomial has all of its complex roots on the critical line. In what follows, we prove the following: for any $d \geq 6$, there exists a reflexive polytope of dimension $d$ having a non-unimodal $h^*$-polynomial, but all of whose roots lie on the unit circle.
    
    We make the elementary observation that:
    \[\{bk+r : b,k,r \in \mathbb{Z}, b \geq 3, r \geq 0, k \geq r+2\}=\{ d \in \mathbb{Z} : d \geq 6\} \setminus \{7,11\},\]
    (we will address the cases $d=7$ and $d=11$ at the end).
    By \cite[Example 4.2]{payne}, we know that for any given integers $b,k,r$ with $b \geq 3, r \geq 0, k \geq r+2$, there exists a reflexive $d$-simplex, where $d=bk+r$, whose $h^*$-polynomial is the following: 
    \begin{align*}
        1+x+\cdots+x^d&+(1+\cdots+x^r)(x^k+x^{2k}+\cdots+x^{(b-1)k}) \\
        &=\underbrace{\sum_{i=0}^{k-1}x^i \cdot \sum_{j=0}^{b-1}x^{kj}+\sum_{i=0}^r x^{bk+i}}_{=1+x+\cdots+x^d}+x^k\sum_{i=0}^rx^i \cdot \sum_{j=0}^{b-2}x^{kj} \\
        &=\sum_{i=0}^{k-1}x^i \cdot \sum_{j=0}^{b-1}x^{kj}+x^k\sum_{i=0}^rx^i \cdot \sum_{j=0}^{b-1}x^{kj} \\
        &=\sum_{i=0}^{k+r}x^i \cdot \sum_{j=0}^{b-1}
x^{kj},    \end{align*}
as required. The two remaining cases $d=7$ and $d=11$ follow, respectively, by two constructions of Payne in \cite[Example~4.3 and Example~4.4]{payne}. He constructed reflexive polytopes $P$ and $Q$ of dimensions $7$ and $11$ respectively, having 
\begin{align*}
    h^*_P(x) &= 1+ 2x +6x^2+ 5x^3 +5x^4 +6x^5+ 2x^6 +x^7,\\
    h^*_Q(x) &= 1+x+4x^2+6x^3+4x^4+6x^5+6x^6+4x^7+6x^8+4x^9+x^{10}+x^{11}.
\end{align*}
We have verified using a computer that the corresponding Ehrhart polynomials have all of their complex roots lying on the critical line.
\end{proof}
\begin{remark}
    Expanding on Remark~\ref{rem:unimod}, we point out that the $h^*$-polynomials of reflexive polytopes of dimension $4$ or $5$ are not necessary log-concave. 
    In fact, we can construct examples among CL-polytopes. Consider the polytopes defined as the convex hull of the column vectors of 
    \[ \begin{bmatrix} 
    1 & 0 & 0 & 0 & -1\\
    0 & 1 & 0 & 0 & -1\\
    0 & 0 & 1 & 0 & -1\\
    0 & 0 & 0 & 1 & -2
    \end{bmatrix} \qquad\text{ and }\qquad
    \begin{bmatrix} 
    1 & 0 & 0 & 0 & 0 & -1\\
    0 & 1 & 0 & 0 & 0 & -1\\
    0 & 0 & 1 & 0 & 0 & -1\\
    0 & 0 & 0 & 1 & 0 & -2\\
    0 & 0 & 0 & 0 & 1 & -2
    \end{bmatrix}\]
    Their $h^*$-polynomials are, respectively, $x^4+x^3+2x^2+x+1$ and $x^5+x^4+2x^3+2x^2+x+1$, respectively. These are not log-concave. One can verify that the corresponding Ehrhart polynomials have all of their complex roots on the critical line.
\end{remark}
\subsection{Ehrhart positivity and \texorpdfstring{$h^*$}{h*}-real-rootedness together}

Often, one could be interested in proving that a polytope $P$ has at the same time the property of being Ehrhart positive and $h^*$-real-rooted. It is quite frequent that both of these properties indeed hold, and thus we are motivated to look for techniques that can perhaps be useful to attack both of them \emph{simultaneously}. In light of our Theorem~\ref{thm:ehr-real-rooted-implies-h*-real-rooted}, there is a scenario in which one can relate these two properties. 

The following result, discovered by Br\"and\'en in \cite{branden-transactions} in a different form, is perhaps the most useful technique that one has at disposal. 

\begin{theorem}\label{thm:magic-basis}
    Let $P$ be a lattice polytope of dimension $d$. Consider the Ehrhart polynomial written in a different basis:
        \[ E_P(x) = \sum_{i=0}^d a_i x^i (1+x)^{d-i}.\]
    If $a_0,\ldots,a_d\geq 0$, then we have simultaneously that $E_P(x)$ has non-negative coefficients and $h^*_P(x)$ is real-rooted.
\end{theorem}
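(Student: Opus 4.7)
The plan is to establish the two conclusions separately. The Ehrhart positivity follows immediately: expanding $x^i(1+x)^{d-i} = \sum_{k=0}^{d-i}\binom{d-i}{k} x^{i+k}$, every coefficient is non-negative, and hence so is the non-negative combination $\sum_i a_i x^i(1+x)^{d-i}$.

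For the real-rootedness of $h^*_P(t)$, introduce the polynomials
\[ B_i(t) := \mathcal{W}\bigl(x^i(1+x)^{d-i}\bigr), \quad i = 0, 1, \ldots, d, \]
where $\mathcal{W}$ is the operator from \eqref{eq:wagner}. Since each $x^i(1+x)^{d-i}$ has degree exactly $d$, the operator $\mathcal{W}$ is linear on their span (using the common denominator $(1-t)^{d+1}$), and we obtain $h^*_P(t) = \sum_{i=0}^d a_i B_i(t)$. Each $B_i(t)$ is individually real-rooted: $x^i(1+x)^{d-i}$ has all zeros in $\{-1, 0\} \subseteq [-1, 0]$, so the classical result of Brenti recalled in the remark after Theorem~\ref{thm:ehr-real-rooted-implies-h*-real-rooted} applies. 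Notably, $B_0(t) = A_d(t)$ (the Eulerian polynomial) and $B_d(t) = tA_d(t)$.

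The crux of the proof is to show that the family $\{B_0, B_1, \ldots, B_d\}$ is \emph{compatible}, meaning every non-negative linear combination is real-rooted, or equivalently that the polynomials share a common interlacer. For consecutive indices, this follows by applying Brenti's theorem to
\[ \lambda\,x^i(1+x)^{d-i} + \mu\,x^{i+1}(1+x)^{d-i-1} = x^i(1+x)^{d-i-1}\bigl(\lambda + (\lambda+\mu)x\bigr), \]
whose zeros all lie in $[-1, 0]$ whenever $\lambda, \mu \geq 0$. Combined with the linearity of $\mathcal{W}$, this yields the real-rootedness of $\lambda B_i + \mu B_{i+1}$ for all non-negative $\lambda, \mu$, and hence, by the classical Obreschkoff criterion for compatible polynomials, a common interlacer for each consecutive pair $(B_i, B_{i+1})$.

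The hardest step will be upgrading this pairwise compatibility of consecutive indices to joint compatibility of the full family $\{B_0, \ldots, B_d\}$. A natural approach is induction on $d$ via the recursion
\[ B_{i+1}^{(d)}(t) = B_i^{(d)}(t) - (1-t)\,B_i^{(d-1)}(t), \]
derived from the polynomial identity $x^{i+1}(1+x)^{d-i-1} = x^i(1+x)^{d-i} - x^i(1+x)^{d-i-1}$, combined with the inductive hypothesis on $\{B_0^{(d-1)}, \ldots, B_{d-1}^{(d-1)}\}$; alternatively, one may identify $B_i(t)$ combinatorially as a descent-type polynomial of a set of words (half-open box statistics), where the required interlacing structure is accessible through known results on colored-permutation Eulerian polynomials. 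Once joint compatibility is secured, the real-rootedness of $h^*_P(t) = \sum_i a_i B_i(t)$ follows.
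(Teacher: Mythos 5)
Your decomposition into an Ehrhart-positivity half (trivial) and an $h^*$-real-rootedness half is reasonable, and your observation that $\mathcal{W}$ is linear on polynomials of fixed degree $d$, so that $h^*_P = \sum_i a_i B_i$ with $B_i := \mathcal{W}(x^i(1+x)^{d-i})$, is correct. The individual real-rootedness of each $B_i$ and of consecutive combinations $\lambda B_i + \mu B_{i+1}$ via Brenti's $[-1,0]$-roots theorem is also fine. However, there is a genuine gap precisely where you flag it: you have not proved, and your suggested routes do not obviously yield, that the full family $\{B_0,\ldots,B_d\}$ is jointly compatible. Worse, the specific tool you use for consecutive pairs breaks for non-consecutive ones: for $i<j$ one has $\lambda\,x^i(1+x)^{d-i}+\mu\,x^j(1+x)^{d-j}=x^i(1+x)^{d-j}\bigl(\lambda(1+x)^{j-i}+\mu x^{j-i}\bigr)$, and already for $j-i=2$ the factor $\lambda(1+x)^2+\mu x^2=(\lambda+\mu)x^2+2\lambda x+\lambda$ has discriminant $-4\lambda\mu<0$, hence non-real roots. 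So Brenti's $[-1,0]$-criterion gives you a common interlacer for consecutive pairs only, and upgrading ``consecutive pairwise compatibility'' to ``mutual compatibility of the whole family'' is not automatic — it requires establishing that $(B_0,\ldots,B_d)$ is an interlacing \emph{sequence}, which is exactly what you leave open. (The $B_i$ are in fact the $A$-shifted Eulerian polynomials $A_d(t), A_{d,1}(t),\ldots, A_{d,d}(t)=tA_d(t)$ whose interlacing structure is known, but citing or reproving that is the whole content of the step you omit.)

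The paper's proof sidesteps compatibility entirely. It performs a double change of basis, $Q(x)=\sum a_i x^i(1+x)^{d-i}=\sum b_i\binom{x}{i}=\sum h_i\binom{x+d-i}{d}$, and then combines two ready-made results: Brenti's Theorem~3.2.1 (not the $[-1,0]$-roots theorem you invoke, but the statement that $\sum h_ix^i$ is negative-real-rooted iff all $h_i\geq 0$ and $\sum b_ix^i$ is negative-real-rooted) and Br\"and\'en's Theorem~4.2 from \cite{branden-transactions} (if $a_i\geq 0$ then $\sum b_ix^i$ is negative-real-rooted), with Stanley's nonnegativity theorem supplying $h_i\geq 0$. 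Your route, if completed, would amount to reproving the $B_i$-interlacing input to Br\"and\'en's theorem from scratch; the paper's route imports it.
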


\begin{proof}
    If all the $a_i$'s are non-negative, we obviously have that $E_P(x)$ has non-negative coefficients, so we focus on the real-rootedness of the $h^*$-polynomial. Fix a polynomial $Q(x)$ having degree $d$, and let us write:
        \[ Q(x) = \sum_{i=0}^d a_i x^i(1+x)^{d-i} = \sum_{i=0}^d b_i \binom{x}{i} = \sum_{i=0}^d h_i \binom{x+d-i}{d}.\]
    Brenti proved in \cite[Theorem~3.2.1]{brenti} that $\sum_{i=0}^d h_i x^i$ has only negative real-roots if and only if it happens simultaneously:
        \begin{itemize}
            \item All the $h_i$'s are non-negative.
            \item The polynomial $\sum_{i=0}^d b_i x^i$ has only negative real-roots.
        \end{itemize}
    If $Q(x)$ is an Ehrhart polynomial, then we get the first condition automatically by Stanley's non-negativity theorem \cite{stanley-hstar}, whereas the second follows from the result of Br\"and\'en in \cite[Theorem~4.2]{branden-transactions}, which says that if all the $a_i$'s are non-negative then $\sum_{i=0}^d b_i x^i$ has only negative real roots. 
\end{proof}

\begin{remark}
    Equivalent (but more complicated) versions of this result have been rediscovered several times since the paper of Br\"and\'en was published. For example, in \cite{brenti-welker}, \cite{savage-visontai} and \cite{beck-jochemko-mccullough} one can find statements that are equivalent to the above, yet they are phrased in terms of generalizations of the Eulerian polynomials. 
\end{remark}

As a consequence of the lack of a better way of referring to the property of ``expanding positively in the basis $\{x^i(1+x)^{d-i}\}_{i=0}^d$'', we often refer informally (for example in Figure~\ref{fig:flowchart}) to a polynomial $E_P(x)$ satisfying the assumptions of Theorem~\ref{thm:magic-basis} as `magic positive'. In the authors' opinion, it is undeniable that a statement such as Theorem~\ref{thm:magic-basis} is assessed quite accurately by the use of the word `magic'.

\begin{remark}
   The proof of Theorem~\ref{thm:magic-basis} can be upgraded so to say that if $f(x)$ is magic positive, then $(\mathscr{W}f)(x)$ is real-rooted, where $\mathscr{W}:\mathbb{R}[x]\to\mathbb{R}[x]$ is the operator defined in equation~\eqref{eq:wagner}. Recently, an analog of this property was established by Athanasiadis \cite[Theorem~1.1]{athanasiadis-eulerian}: he proved that if $f(x)$ is magic positive, then $(\mathscr{A}^{\circ}f)(x)$ is real-rooted where $\mathscr{A}^{\circ}:\mathbb{R}[x]\to\mathbb{R}[x]$ is the Eulerian transformation studied by Br\"and\'en and Jochemko \cite{branden-jochemko}. Yet another analog of this result was also established by Br\"and\'en and Solus for the derangement transformation \cite[Corollary~3.7]{branden-solus}.
\end{remark}

The following result provides instances in which Theorem~\ref{thm:magic-basis} applies:

\begin{theorem}
    The following families of polytopes satisfy the conditions of Theorem~\ref{thm:magic-basis} and are therefore Ehrhart positive and $h^*$-real-rooted.
    \begin{enumerate}[\normalfont(i)]
        \item Zonotopes.
        \item Pitman-Stanley polytopes.
    \end{enumerate}
\end{theorem}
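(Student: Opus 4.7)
For each family the strategy is the same at a conceptual level: exhibit, for a polytope $P$ of dimension $d$, an explicit combinatorial decomposition realizing its Ehrhart polynomial as a non-negative integer combination of the ``magic'' polynomials $x^{i}(1+x)^{d-i}$. Concretely, the goal in each case is to write $P$ as a disjoint union of half-open lattice unit parallelepipeds $P_\alpha$, each of which has Ehrhart polynomial precisely $x^{i(\alpha)}(1+x)^{d-i(\alpha)}$, and then to read off the non-negative coefficients $a_i$ required by Theorem~\ref{thm:magic-basis}.

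For zonotopes, I would invoke the classical half-open shelling. Given a lattice zonotope $Z=\sum_{i=1}^{n}[0,v_i]$ of dimension $d$, with realizable matroid $M$ on $\{v_1,\dots,v_n\}$, a generic linear order on the generators yields a partition
\[
Z \;=\; \bigsqcup_{B\in\mathcal{B}(M)} \Pi_B^{\circ},
\]
where $\Pi_B^{\circ}$ is the half-open parallelepiped spanned by $B$ with exactly $\operatorname{ia}(B)$ closed facets and $d-\operatorname{ia}(B)$ open facets (here $\operatorname{ia}$ denotes internal activity). In the unimodular case each $\Pi_B^{\circ}$ is a lattice translate of $[0,1]^{\operatorname{ia}(B)}\times[0,1)^{d-\operatorname{ia}(B)}$, whose Ehrhart polynomial is precisely $x^{d-\operatorname{ia}(B)}(1+x)^{\operatorname{ia}(B)}$, and summing gives
\[
E_Z(x) \;=\; \sum_{B\in\mathcal{B}(M)} x^{d-\operatorname{ia}(B)}(1+x)^{\operatorname{ia}(B)}.
\]
This identity can also be checked purely algebraically, starting from Stanley's formula $E_Z(x)=\sum_{I\in\mathcal{I}(M)} x^{|I|}$, the identity $I_M(x)=x^{d} T_M(1+1/x,1)$, and the basis expansion $T_M(x,y)=\sum_B x^{\operatorname{ia}(B)} y^{\operatorname{ea}(B)}$. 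For a general lattice zonotope I would further refine each $\Pi_B^{\circ}$ with $|\det(B)|=g(B)>1$ into $g(B)$ many half-open unit parallelepipeds, indexed by the lattice points of the fundamental box of $B$; this is the half-open refinement used (in a different language) by Beck--Jochemko--McCullough in their analysis of $h^*$-polynomials of zonotopes, and each piece again contributes a pure term $x^{i}(1+x)^{d-i}$.

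For Pitman--Stanley polytopes, my plan is to exploit their description as flow polytopes on a caterpillar graph (equivalently, as $\mathcal{Y}$-generalized permutohedra of a particularly simple shape). The Ehrhart polynomial admits the iterated-sum expression
\[
E_{PS_n(\mathbf{a})}(x) \;=\; \sum_{\mathbf{k}} \binom{x a_1}{k_1}\binom{x a_2+k_1}{k_2}\cdots\binom{x a_n+k_{n-1}}{k_n},
\]
with $\mathbf{k}$ ranging over the relevant parking-function cone. Repeated applications of $\binom{x+c}{k}=\sum_j \binom{c}{k-j}\binom{x}{j}$ and Chu--Vandermonde rewrite each summand as a non-negative combination of $x^{i}(1+x)^{d-i}$. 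An alternative, more geometric, route is to construct a half-open unit-parallelepiped decomposition of $PS_n(\mathbf{a})$ directly from the lattice-path/staircase model, which then parallels the zonotope argument step by step.

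The principal obstacle in both cases is the same: controlling non-unimodular behavior while preserving term-by-term magic positivity. For zonotopes, the delicate point is verifying that the refinement of a non-unimodular half-open parallelepiped can be carried out coherently with the activity labels, so that each refined piece still contributes a pure $x^{i}(1+x)^{d-i}$ with no lower-order corrections. For Pitman--Stanley polytopes the challenge is to produce a decomposition (combinatorial or geometric) in which magic positivity is manifest, rather than one yielding only ordinary Ehrhart positivity in the monomial basis; a careful choice of flow decomposition, or of the staircase subdivision, should make this tractable.
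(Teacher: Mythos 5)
The paper itself does not supply a proof of this theorem; it defers to the literature, citing Beck--Jochemko--McCullough for zonotopes (whose proof the paper remarks is ``essentially equivalent to proving that Ehrhart polynomials of zonotopes expand positively in the magic basis'') and Ferroni--Morales for Pitman--Stanley polytopes. Your zonotope argument is in fact the same decomposition-into-half-open-parallelepipeds route as the cited source, and you correctly acknowledge this. The algebraic identity $I_M(t)=t^d\,T_M(1+1/t,1)$ and the resulting expansion $\sum_B t^{d-\operatorname{ia}(B)}(1+t)^{\operatorname{ia}(B)}$ are correct, so for unimodular zonotopes the argument closes; for general lattice zonotopes the needed coherent refinement of non-unimodular half-open parallelepipeds is precisely what is carried out in the reference, and your sketch is compatible with it.

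The Pitman--Stanley half of your proposal, however, has a genuine gap. You offer two plans --- rewriting the iterated-sum formula via repeated Chu--Vandermonde, or constructing a half-open unit-parallelepiped decomposition from the staircase model --- but neither is executed, and neither obviously lands. The obstruction you flag is real and it is the entire content of the claim: being a non-negative combination of products of binomial coefficients, as the iterated-sum formula already is, does not by itself yield non-negativity of the coefficients in the basis $\{x^i(1+x)^{d-i}\}_{i=0}^d$. (Indeed $(1+x)^{d-i}$ does not expand positively into binomial coefficients, and conversely the change of basis between $\{\binom{x}{i}\}$ and $\{x^i(1+x)^{d-i}\}$ involves signs, so term-by-term positivity must actually be checked, not assumed.) Likewise, a generic subdivision of a $\mathcal{Y}$-generalized permutohedron into half-open simplices gives $h^*$-positivity, not magic positivity; the cells must be (refinements into) half-open unit \emph{boxes}, and it is not clear that the staircase subdivision has that property. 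The Ferroni--Morales proof supplies exactly the missing mechanism, so as written your proposal proves (i) but only outlines a research program for (ii).
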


The reader should not be misguided by the shortness of the list above. We have actually observed empirically that the same phenomenon seems to persist for some polytopes lying in the rich family of $\mathcal{Y}$-generalized permutohedra (but not for all of them). This family is known to be Ehrhart positive due to a result of Postnikov \cite{postnikov}, but the behavior of their $h^*$-polynomials is poorly understood. Actually, there are reasons to believe in the $h^*$-real-rootedness for \emph{all} generalized permutohedra (cf.~\cite[Conjecture~1.3]{ferroni2}) 
--- however, there is no hope of being able to use Theorem~\ref{thm:magic-basis}, because of the counterexample of Theorem~\ref{thm:matroid-negative}.

We have conducted exhaustive computations that make us believe that the following families of polytopes satisfy Theorem~\ref{thm:magic-basis}. It would be of great interest to provide actual proofs.

\begin{restatable}{problem}{magicapplication}
    For which of the following families of polytopes is the Ehrhart polynomial magic positive? 
     \begin{itemize}
        \item Partial permutohedra.
        \item Harmonic polytopes.
        \item Bipermutohedra.
        \item Fertilitopes.
        \item Generalized parking function polytopes.
    \end{itemize}
\end{restatable}

Bipermutohedra and harmonic polytopes arose in work of Ardila, Denham and Huh \cite{ardila-denham-huh}; they are the main objects of study in the papers \cite{ardila-bipermutohedron} and \cite{ardila-escobar}. We note that although fertilitopes and partial permutohedra $\mathscr{P}(m,n)$ for $n\geq m-1$ are examples of $\mathcal{Y}$-generalized permutohedra, the latter class does not always yield polytopes whose Ehrhart polynomial is magic positive; however, we speculate that it might be possible to characterize all $\mathcal{Y}$-generalized-permutohedra that do.

\section{Log-concavity of evaluations of the Ehrhart polynomial}\label{sec:five}

\subsection{Log-concavity of the Ehrhart series} Now we turn our attention to a different type of inequalities for Ehrhart polynomials. The actual property that interests us is quite related to Ehrhart positivity and $h^*$-log-concavity, yet it has enjoyed only limited attention in the literature so far. We will devote this section to address the following question: 

\begin{question}
    When is the sequence $E_P(0), E_P(1), E_P(2),\ldots$ of evaluations of the Ehrhart polynomial of $P$ an infinite log-concave sequence?
\end{question}

For short, we will refer to the property described in the above question as \emph{Ehrhart series log-concavity}. This should not be confused with the log-concavity of the Ehrhart series as a real function. 

For an \emph{arbitrary} polynomial $f(x)$ having a positive leading term, it is not difficult to prove that the inequality $f(m)^2< f(m-1)f(m+1)$ can happen for finitely many positive integer values of $m$. In other words, for a sufficiently large value of $N$, we will have $f(m)^2\geq f(m-1)f(m+1)$ whenever $m\geq N$. The reason is that, as the reader may verify, the polynomial $g(x):=f(x)^2-f(x-1)f(x+1)$ will have a positive leading term as well, and thus for sufficiently large real positive values of the variable $x$, it will be strictly positive. In particular, the point of concern in the above question is whether log-concavity breaks for ``small'' positive integers. 

As a glimpse of how subtle this property is, we offer the following result. The spirit is that the log-concavity of the Ehrhart series can fail in quite dramatic ways.

\begin{theorem}
    For every positive integer $k$ there exists a polytope $P$ of sufficiently large dimension, such that $E_P(m)^2 < E_P(m-1)E_P(m+1)$ for at least $k$ distinct positive integer values of $m$.
\end{theorem}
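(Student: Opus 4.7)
\emph{Plan.} My strategy is to construct $P$ as the Cartesian product of $k$ generalized Reeve simplices (Example~\ref{ex:reeve-simplices}) with widely separated parameters. Since $E_{P_1\times P_2}(m)=E_{P_1}(m)E_{P_2}(m)$, the log-concavity ratio $\rho_P(m):=E_P(m-1)E_P(m+1)/E_P(m)^2$ factors as a product over factors, and non-log-concavity at $m$ is the condition $\rho_P(m)>1$. The idea is to arrange $k$ distinct integers $m=\kappa_1<\cdots<\kappa_k$ at which the product of per-factor ratios exceeds $1$.

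\emph{One building block.} Fix $\kappa\ge 3$ and write the Ehrhart polynomial of $\mathscr{R}_{q,\kappa}$ as $f_\kappa(m)=g(m)+q\,h(m)$ with $g(m)=\binom{m+2\kappa-1}{2\kappa-1}$ and $h(m)=\binom{m+\kappa-1}{2\kappa-1}$. Using $h(\kappa-1)=0$, $h(\kappa)=1$, $h(\kappa+1)=2\kappa$, a direct expansion gives
\[ f_\kappa(\kappa)^2-f_\kappa(\kappa-1)f_\kappa(\kappa+1)=q^2-2A_\kappa q+B_\kappa, \]
where $A_\kappa=\kappa g(\kappa-1)-g(\kappa)=g(\kappa-1)(\kappa^2-3\kappa+1)/\kappa>0$ (using $g(\kappa)/g(\kappa-1)=(3\kappa-1)/\kappa$) and $B_\kappa=g(\kappa)^2-g(\kappa-1)g(\kappa+1)>0$ by log-concavity of binomials. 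Choosing $q=q_\kappa^*:=A_\kappa$ (the minimizer) yields $f_\kappa(\kappa)=\kappa g(\kappa-1)$ and
\[ \rho_\kappa:=\frac{f_\kappa(\kappa-1)f_\kappa(\kappa+1)}{f_\kappa(\kappa)^2}=1+\frac{(\kappa^2-3\kappa+1)^2}{\kappa^4}-\frac{B_\kappa}{\kappa^2 g(\kappa-1)^2}. \]
As $\kappa\to\infty$, the second summand tends to $1$ and the third is $O(1/\kappa^3)$, so $\rho_\kappa\to 2$. Hence there is $\kappa_0$ such that $\rho_\kappa\ge 3/2$ whenever $\kappa\ge\kappa_0$ (one checks $\kappa_0=12$ works).

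\emph{Assembling the product.} Choose integers $\kappa_1<\kappa_2<\cdots<\kappa_k$ with $\kappa_1\ge\max(\kappa_0,\,10k)$ and $\kappa_{i+1}\ge 2\kappa_i$, set $q_i:=q_{\kappa_i}^*$, and let $P:=\prod_{i=1}^k\mathscr{R}_{q_i,\kappa_i}$. Write $\rho^{(j)}(m)$ for the log-concavity ratio of the $j$-th factor. I claim $\rho_P(\kappa_i)=\prod_j\rho^{(j)}(\kappa_i)>1$ for each $i$. The $i$-th factor gives $\rho^{(i)}(\kappa_i)=\rho_{\kappa_i}\ge 3/2$ by the previous step. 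For $j>i$ we have $\kappa_i<\kappa_j$, so $f_{\kappa_j}(\kappa_i)=g_{\kappa_j}(\kappa_i)$ is a pure binomial and a direct computation gives $\rho^{(j)}(\kappa_i)=\kappa_i(\kappa_i+2\kappa_j)/[(\kappa_i+1)(\kappa_i+2\kappa_j-1)]\ge\kappa_i/(\kappa_i+1)$. For $j<i$, the polynomial $f_{\kappa_j}$ has degree $2\kappa_j-1$; Taylor expanding $\log f_{\kappa_j}$ at $m=\kappa_i\gg\kappa_j$ (the lower-order corrections are negligible because $q_j$ is exponentially large in $\kappa_j$) yields $\rho^{(j)}(\kappa_i)=1-(2\kappa_j-1)/\kappa_i^2+O(\kappa_j/\kappa_i^3)$. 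Combined with the spacing bound $\sum_{j<i}\kappa_j\le 2\kappa_{i-1}\le\kappa_i$, we obtain
\[ \rho_P(\kappa_i)\ge\tfrac{3}{2}\cdot\bigl(\kappa_i/(\kappa_i+1)\bigr)^k\cdot\bigl(1-2/\kappa_i\bigr)\ge\tfrac{3}{2}\bigl(1-k/(\kappa_i+1)\bigr)\bigl(1-2/\kappa_i\bigr), \]
which exceeds $1$ once $\kappa_i\ge 10k$. Hence log-concavity of the Ehrhart series fails at each of the $k$ distinct integers $\kappa_1,\ldots,\kappa_k$.

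\emph{Main obstacle.} The delicate point is that the non-target factors $\rho^{(j)}(\kappa_i)$, $j\neq i$, are each individually less than~$1$ (reflecting the log-concavity of binomials and of $f_{\kappa_j}$ away from its jump at $\kappa_j$), so their cumulative multiplicative drag could a priori swamp the local boost $\rho^{(i)}(\kappa_i)\ge 3/2$. The geometric spacing $\kappa_{i+1}\ge 2\kappa_i$ and the sizing $\kappa_1\ge 10k$ are imposed precisely to ensure that both $\prod_{j>i}\rho^{(j)}(\kappa_i)$ and $\prod_{j<i}\rho^{(j)}(\kappa_i)$ are of the form $1-O(k/\kappa_i)$, which is insufficient to defeat the boost of $3/2$. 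Making this quantitative analysis rigorous---in particular, bounding the error in the Taylor expansion of $\log f_{\kappa_j}$ uniformly in $\kappa_j$ with the explicit form of $q_j^*$---is the main technical step.
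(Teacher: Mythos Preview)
Your approach via Cartesian products is genuinely different from the paper's. The paper takes the \emph{free sum} (join) $P=\mathscr{R}_{m_1,n_1}*\cdots*\mathscr{R}_{m_k,n_k}$, so that the $h^*$-polynomials multiply rather than the Ehrhart polynomials. Choosing the $n_i$'s close together (specifically $n_1<\cdots<n_k<n_1+n_2$) forces the first $n_k$ coefficients of $h^*_P$ to be exactly $1,m_1,\ldots,m_k$ (at positions $0,n_1,\ldots,n_k$) with no cross terms. Consequently $E_P(n_i-2),E_P(n_i-1),E_P(n_i)$ depend only on $m_1,\ldots,m_i$, and $E_P(n_i)$ depends \emph{linearly} on $m_i$. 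One then chooses $m_1,m_2,\ldots$ in order, each large enough to force non-log-concavity at $n_i-1$; crucially, this choice does not disturb any of the earlier failure points. No asymptotic balancing is needed.

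Your product construction, by contrast, has every factor contributing at every $m$, which is why you must control the drag from the off-target factors. The step you flag as the main obstacle is a real gap: the asymptotic $\rho^{(j)}(\kappa_i)=1-(2\kappa_j-1)/\kappa_i^2+O(\kappa_j/\kappa_i^3)$ for $j<i$ is asserted but not proved, and your parenthetical that ``the lower-order corrections are negligible because $q_j$ is exponentially large'' is exactly the point that needs justification, since a priori the implied constant in the $O$-term could depend on $q_j$. The claim is in fact true---one clean route is to factor $f_{\kappa_j}(m)=g_{\kappa_j}(m)\bigl(1+q_j\,r_j(m)\bigr)$ with $r_j(m)=\prod_{t=0}^{\kappa_j-1}\frac{m-t}{m+\kappa_j+t}$, bound $\rho_{g_j}(m)\ge 1-(2\kappa_j-1)/(m+1)^2$ directly from the product form, and show that $r_j$ is log-concave with $1-\rho_{r_j}(m)=O(\kappa_j^2/m^3)$ uniformly, which then controls $\rho_{1+q_jr_j}$ independently of $q_j$---but this is real work that your sketch omits. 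So your proposal is a viable alternative strategy, but as written it is incomplete at precisely the point you identify; the paper's free-sum-plus-sequential-choice argument sidesteps this analysis entirely.
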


\begin{proof}
    Define $n_1 = 3k-2$, and $n_i = n_1 + 3(i-1)$ for each $i=2,\ldots, k$. Notice that $n_{i+1}-n_i = 3$ for each $1\leq i\leq k-1$, whereas:
        \[ n_k = n_1 + 3(k-1) = 6k-5 < 6k-1 = n_1+n_2.\]
    Preserving the notation of Example~\ref{ex:reeve-simplices}, let $P$ be the join (or free sum) of generalized Reeve simplices $\mathscr{R}_{m_1,n_1},\ldots,\mathscr{R}_{m_k,n_k}$, where the $m_i$'s are certain positive integers to be specified in what follows. Because of the condition $n_1<\cdots<n_k<n_1+n_2$, we see that the $h^*$-polynomial is of the following form: 
    \[h_{P}^*(x)=\prod_{i=1}^k(1+m_i x^{n_i})=1+m_1x^{n_1}+\cdots+m_kx^{n_k}+\sum_{i > n_k}h_i x^i.\]
    Thus, the corresponding Ehrhart polynomial looks as follows: 
    \begin{align*}
    E_P(x)=\binom{x+d}{d}+m_1\binom{x+d-n_1}{d}+\cdots+m_k\binom{x+d-n_k}{d}+\sum_{i > n_k}h_i\binom{x+d-i}{d}, 
    \end{align*}
    where $d=\sum_{i=1}^k(2n_i-1)$ denotes the dimension of the polytope $P$. 
    
    In what follows, we explain that it is possible to choose the integers $m_1,\ldots,m_k$ so that $E_P(x)$ is not log-concave at $n_i-1$ for each $i=1,\ldots,k$.
    
    By the formula, we see the following: 
    \begin{align*}
    E_P(n_i-2)&=\binom{n_i+d-2}{d}+\sum_{j=1}^{i-1}m_j\binom{n_i+d-n_j-2}{d}, \\
    E_P(n_i-1)&=\binom{n_i+d-1}{d}+\sum_{j=1}^{i-1}m_j\binom{n_i+d-n_j-1}{d}, \\
    E_P(n_i)&=\binom{n_i+d}{d}+\sum_{j=1}^{i-1}m_j\binom{n_i+d-n_j}{d}+m_i. 
    \end{align*}
    For $i=1$, we see that 
    \[E_P(n_1-1)^2-E_P(n_1-2)E_P(n_1)=\binom{n_1+d-1}{d}^2-
    \binom{n_1+d-2}{d}\left(\binom{n_1+d}{d}+m_1\right).\]
    We may take a positive integer $m_1$ which makes this negative. 
    We can do this since $n_1$ and $d$ are fixed. 
    
    Once $m_1,\ldots,m_{i-1}$ are fixed, we may follow the same steps for $m_i$. Namely, since 
    \begin{align*}
    &E_P(n_i-1)^2-E_P(n_i-2)E_P(n_i)=\left(\binom{n_i+d-1}{d}+\sum_{j=1}^{i-1}m_j\binom{n_i+d-n_j-1}{d}\right)^2 \\
    &-\left(\binom{n_i+d-2}{d}+\sum_{j=1}^{i-1}m_j\binom{n_i+d-n_j-2}{d}\right)
    \left(\binom{n_i+d}{d}+\sum_{j=1}^{i-1}m_j\binom{n_i+d-n_j}{d}+m_i\right), 
    \end{align*}
    we can choose $m_i$ in order to make this negative. 
\end{proof}

The goal now is to compare the log-concavity of the Ehrhart series with some more familiar inequalities for Ehrhart polynomials. As the following statement shows, the log-concavity of the $h^*$-polynomial is strong enough to guarantee that the sequence $E_P(0)$, $E_P(1)$, $E_P(2),\ldots$ is log-concave as well.

\begin{proposition}\label{prop:hstar-log-concave-implies-series-log-concave}
    If the $h^*$-polynomial is log-concave, then the Ehrhart series is log-concave too.
\end{proposition}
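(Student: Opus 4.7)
The plan is to exhibit $(E_P(m))_{m \geq 0}$ as a Cauchy convolution of two log-concave positive sequences and then invoke the classical preservation theorem. Starting from the definition of the Ehrhart series, I would rewrite it as a product of power series
\[
\sum_{m \geq 0} E_P(m)\, x^m \;=\; h_P^{\ast}(x) \cdot \sum_{m \geq 0} \binom{m+d}{d}\, x^m
\]
and read off coefficients to obtain the convolution identity
\[
E_P(m) \;=\; \sum_{j=0}^{\min(m, s)} h_j \binom{m+d-j}{d}, \qquad s = \deg h_P^{\ast}.
\]
Thus $(E_P(m))_{m\geq 0}$ is the Cauchy convolution of the finite sequence $(h_0, \ldots, h_s)$ with the infinite sequence $\left(\binom{m+d}{d}\right)_{m \geq 0}$.

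The first of these two sequences consists of positive reals and is log-concave by hypothesis, and in particular has no internal zeros. For the second, an elementary manipulation shows
\[
\binom{m+d}{d}^{\!2} - \binom{m-1+d}{d}\binom{m+1+d}{d} \;=\; \binom{m+d}{d}^{\!2} \cdot \frac{d}{(m+d)(m+1)} \;\geq\; 0,
\]
so this sequence is also positive and log-concave, with no internal zeros.

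Finally, I would invoke the classical theorem that the Cauchy convolution of two log-concave sequences of positive reals with no internal zeros is itself log-concave with no internal zeros (see, e.g., \cite{brenti}). Applied to the two sequences above, this immediately yields the log-concavity of $(E_P(m))_{m\geq 0}$, and in addition asserts $E_P(m) > 0$ throughout. The only non-elementary ingredient is the black-box convolution result; the remaining verifications are direct. I do not foresee a genuine obstacle here; the subtlety one must be careful about is merely the positivity / no-internal-zeros hypothesis of the convolution theorem, which is satisfied in both factors for the reasons indicated above.
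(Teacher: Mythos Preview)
Your proposal is correct and follows essentially the same route as the paper: both factor the Ehrhart series as $h_P^*(x)\cdot \frac{1}{(1-x)^{d+1}}$, observe that each factor has a positive log-concave coefficient sequence, and then invoke the classical preservation of log-concavity under Cauchy convolution (the paper cites Menon \cite{menon}, you cite Brenti). Your version is slightly more explicit in verifying the log-concavity of $\binom{m+d}{d}$ and in spelling out the no-internal-zeros hypothesis, but there is no substantive difference.
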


\begin{proof}
    The Ehrhart series can be written as:
        \[ \sum_{m=0}^{\infty} E_P(n) x^m = \frac{h_P^*(x)}{(1-x)^{d+1}},\]
    where $d=\dim P$. By noticing that the right-hand side is the product of the log-concave polynomial $h_P^*(x)$ with the log-concave infinite series
    \[ \frac{1}{(1-x)^{d+1}} = \sum_{m \geq 0} \binom{m+d}{m} x^m.\]
    Since the product of positive log-concave series with constant term equal to $1$ is known to be log-concave (see \cite{menon}), the sequence $E_P(0),E_P(1), E_P(2),\ldots$ is log-concave.
\end{proof}

It is quite inviting to postulate that maybe the log-concavity of the $h^*$-polynomial is \emph{equivalent} to the log-concavity of the Ehrhart series. We shall see now that it is not the case.

\begin{theorem}
    There exists a lattice polytope $P$ such that the Ehrhart series is log-concave, yet the $h^*$-polynomial is not log-concave (moreover, not even unimodal).
\end{theorem}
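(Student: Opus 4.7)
The plan is to take as witness the Reeve tetrahedron $P := \mathcal{R}_1$ from Example~\ref{ex:reeve-tetrahedra}, i.e., the three-dimensional lattice simplex with vertices $(0,0,0)$, $(1,0,0)$, $(0,1,0)$ and $(1,1,2)$. By Example~\ref{ex:reeve-tetrahedra} we have $h^*_P(x) = 1 + x^2$; the coefficient sequence $(1,0,1)$ has an internal zero sandwiched between two positive values, so $h^*_P(x)$ is not unimodal (and \emph{a fortiori} not log-concave). This settles one half of the statement and simultaneously explains why $P$ is a natural candidate: its $h^*$-polynomial is the minimal non-unimodal shape among $h^*$-polynomials of three-dimensional simplices.

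For the other half I would verify that the evaluations $E_P(0), E_P(1), E_P(2),\ldots$ form a log-concave sequence. Again by Example~\ref{ex:reeve-tetrahedra}, $E_P(m) = \tfrac{1}{3}(m^3+3m^2+5m+3)$; set $f(m) := 3 E_P(m) = m^3+3m^2+5m+3$. The key step is to show that $f(m)^2 - f(m-1)f(m+1) > 0$ for every integer $m \geq 1$. A direct expansion collapses this expression to a degree-$4$ polynomial in $m$ with strictly positive coefficients, namely
\begin{align*}
f(m)^2 - f(m-1)f(m+1) = 3(m^4+4m^3+5m^2+2m+3),
\end{align*}
which is patently positive for all $m \geq 0$. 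Dividing by $9$ yields $E_P(m)^2 > E_P(m-1)E_P(m+1)$ for every $m \geq 1$, so the Ehrhart series of $P$ is strictly log-concave.

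The argument is computationally trivial; the only conceptual point worth flagging is that one must pick the parameter $q$ in the Reeve family small enough so that the initial inequalities are not violated. Since $E_{\mathcal{R}_q}(1)=4$ independently of $q$ whereas $E_{\mathcal{R}_q}(2)=q+10$, the check at $m=1$ already forces $q\leq 6$, so $q=1$ sits comfortably in range. In view of Proposition~\ref{prop:hstar-log-concave-implies-series-log-concave}, the philosophical content of this example is that convolving a non-unimodal $h^*$-vector with the strictly log-concave kernel $\binom{m+3}{3}$ can smooth out an internal zero while still producing a log-concave sequence of evaluations; this exhibits a strict separation between $h^*$-log-concavity and Ehrhart series log-concavity.
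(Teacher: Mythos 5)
Your proposal is correct and uses the same witness $\mathcal{R}_1$ as the paper; the only difference is that the paper factors $E_P(x)=\tfrac{1}{3}(x+1)(x^2+2x+3)$ and reduces the log-concavity check to the quadratic factor, while you expand $f(m)^2-f(m-1)f(m+1)$ directly and observe that the resulting quartic $3(m^4+4m^3+5m^2+2m+3)$ has all positive coefficients. Both are valid and equally elementary.
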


\begin{proof}
    Consider the Reeve tetrahedron $\mathcal{R}_1$ defined in Example~\ref{ex:reeve-tetrahedra}. Its $h^*$-polynomial is $x^2+1$, which is not unimodal. Its Ehrhart polynomial is $E_P(x) = \frac{1}{3}(x+1)(x^2+2x+3)$.
    Let us prove that for $n\geq 1$ one has: $E_P(n)^2\geq E_P(n-1)E_P(n+1)$, in turn, it suffices to ignore the factor $\frac{1}{3} (x+1)$. We therefore focus on proving that $Q(x):=x^2+2x+3$ satisfies $Q(n)^2\geq Q(n-1)Q(n+1)$. Observe that:
        \[ Q(n)^2 - Q(n-1)Q(n+1) = 2n^2 + 4n - 3 > 0\]
    for every $n\geq 1$. In particular, our polytope has a log-concave Ehrhart series.
\end{proof}

In other words, the preceding result and Proposition~\ref{prop:hstar-log-concave-implies-series-log-concave} show that the log-concavity of the $h^*$-polynomial is \emph{strictly stronger} than the log-concavity of the Ehrhart series. For the interested reader, we mention that the $h^*$-real-rootedness is equivalent to the Ehrhart series being a P\'olya frequency sequence (see for example \cite[Theorem~4.6.1]{brenti}). 

A reasonable question that one might have is whether it is possible to weaken the assumptions of Proposition~\ref{prop:hstar-log-concave-implies-series-log-concave} to just unimodality. In other words, it is tempting to ask whether having $h^*$-unimodality suffices to conclude the log-concavity of the Ehrhart series. Even though they are harder to construct, counterexamples do exist.

\begin{theorem}
    There exists a lattice polytope $P$ having unimodal $h^*$-polynomial whose Ehrhart series is not log-concave.
\end{theorem}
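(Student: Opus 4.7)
The plan is to exhibit a $4$-dimensional counterexample via the product construction $P_q := \mathcal{R}_q \times [0,1]$, where $\mathcal{R}_q$ is the Reeve tetrahedron of Example~\ref{ex:reeve-tetrahedra}, and to choose $q$ sufficiently large.

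First I would exploit that $E_{P_q}(m) = (m+1)\cdot E_{\mathcal{R}_q}(m)$ to evaluate the Ehrhart polynomial at the first few integers. A routine computation using the formula from Example~\ref{ex:reeve-tetrahedra} gives
\[ E_{P_q}(0) = 1, \quad E_{P_q}(1) = 8, \quad E_{P_q}(2) = 3q+30.\]
Hence $E_{P_q}(1)^2 - E_{P_q}(0)\,E_{P_q}(2) = 64 - (3q+30) = 34 - 3q$, which is strictly negative whenever $q \geq 12$. This shows that the Ehrhart series of $P_q$ fails log-concavity (already at $m=1$) for every such $q$.

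Next I would compute the $h^*$-polynomial using the change of basis $E_{P_q}(m) = \sum_{i=0}^{4} h_i \binom{m+4-i}{4}$, together with the additional values $E_{P_q}(3) = 16q + 80$ and $E_{P_q}(4) = 50q + 175$ obtained from the same product formula. After solving the triangular system, this should yield
\[ h^*_{P_q}(x) = 1 + 3\,x + 3q\,x^2 + q\,x^3.\]
For every $q \geq 1$ the sequence $(1, 3, 3q, q, 0)$ satisfies $1 \leq 3 \leq 3q$ and $3q \geq q \geq 0$, and is therefore unimodal.

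Taking any $q \geq 12$ then provides a lattice polytope with unimodal $h^*$-polynomial whose Ehrhart series is not log-concave. There is no substantive obstacle here; the entire argument reduces to plugging numbers into the product formula and the standard change of basis. The only conceptual insight is that multiplying the Reeve tetrahedron (whose $h^*$-polynomial $1+qx^2$ has an internal zero) by the unit interval ``smears'' the $h^*$-vector into a unimodal shape, while still preserving the rapid growth of $E_{P_q}(2)$ that is responsible for the breakdown of log-concavity at $m=1$. As a sanity check, the resulting $h^*$-polynomial itself fails to be log-concave whenever $q \geq 4$ (since $h_1^2 = 9 < 3q = h_0 h_2$), in agreement with Proposition~\ref{prop:hstar-log-concave-implies-series-log-concave}.
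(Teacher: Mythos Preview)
Your proof is correct. The computations check out: for $q\geq 12$ the product $P_q=\mathcal{R}_q\times[0,1]$ has unimodal $h^*$-vector $(1,3,3q,q,0)$ while $E_{P_q}(1)^2 - E_{P_q}(0)E_{P_q}(2) = 34-3q<0$.

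Your route is genuinely different from the paper's. The paper exhibits a specific $5$-dimensional polytope in $\mathbb{R}^7$ (a segmental fibration over the edge polytope of a $6$-cycle, following Laso\'n--Micha{\l}ek) with $h^*$-polynomial $1+6x+6x^2+113x^3$ and then checks $E_P(2)^2 < E_P(1)E_P(3)$ numerically. Your construction is both simpler and more conceptual: it is a one-parameter family, it lives in dimension~$4$, and the mechanism (the product with $[0,1]$ smooths the internal zero of $h^*_{\mathcal{R}_q}$ into a unimodal shape while preserving the large jump at $m=2$) is transparent. What the paper's example buys, however, is that it is \emph{very ample}; this is used immediately afterwards to deduce the corollary that very ample polytopes need not have log-concave Ehrhart series. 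Your polytope $P_q$ is not even spanning (the lattice points of $\mathcal{R}_q$ span a sublattice of index $q+1$, and taking a product with $[0,1]$ does not repair this), so it cannot serve that secondary purpose.
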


\begin{proof}
    Consider the $5$-dimensional polytope in $\mathbb{R}^7$ whose vertices are given by the columns of the following matrix:
    \[\left[\begin{array}{rrrrrrrrrrrr}
    1 & 1 & 1 & 1 & 0 & 0 & 0 & 0 & 0 & 0 & 0 & 0 \\
    1 & 1 & 0 & 0 & 1 & 1 & 0 & 0 & 0 & 0 & 0 & 0 \\
    0 & 0 & 0 & 0 & 1 & 1 & 1 & 1 & 0 & 0 & 0 & 0 \\
    0 & 0 & 0 & 0 & 0 & 0 & 1 & 1 & 1 & 1 & 0 & 0 \\
    0 & 0 & 0 & 0 & 0 & 0 & 0 & 0 & 1 & 1 & 1 & 1 \\
    0 & 0 & 1 & 1 & 0 & 0 & 0 & 0 & 0 & 0 & 1 & 1 \\
    111 & 112 & 0 & 1 & 0 & 1 & 0 & 1 & 0 & 1 & 0 & 1
    \end{array}\right]\]
    The $h^*$-polynomial is $h^*_P(x) = 1+6x+6x^2+113x^3$ which is unimodal (but not log-concave). The Ehrhart polynomial is given by $E_P(x) = \frac{21}{20} x^{5} + \frac{7}{8} x^{4} - 2 x^{3} + \frac{33}{8} x^{2} + \frac{139}{20} x + 1$. Moreover, we have $E_P(1)=12$, $E_P(2) = 63$ and $E_P(3) = 331$. However, observe that $E_P(1)E_P(3) = 3972 > 3969 = E_P(2)^2$, which shows that the Ehrhart series is not log-concave.
\end{proof}

At first glance, the example we constructed in the preceding proof may seem quite ad-hoc. However, it was actually chosen so to be able to conclude the following result, which combined with Proposition~\ref{prop:hstar-log-concave-implies-series-log-concave} provides a strengthening of Theorem~\ref{thm:very-ample-not-log-concave}.

\begin{corollary}
    There exists a very ample polytope $P$ whose Ehrhart series is not log-concave.
\end{corollary}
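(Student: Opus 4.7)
The plan is to re-examine the polytope $P$ constructed in the proof of the previous theorem and to verify that it is very ample; the non-log-concavity of its Ehrhart series is already established there, so this is the only missing ingredient.

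First I would identify the combinatorial structure of $P$ by projecting its vertices onto the first six coordinates. The twelve vertices collapse, in pairs, onto the six vectors
\[(1,1,0,0,0,0),\ (1,0,0,0,0,1),\ (0,1,1,0,0,0),\ (0,0,1,1,0,0),\ (0,0,0,1,1,0),\ (0,0,0,0,1,1),\]
which are precisely the incidence vectors of the edges of the $6$-cycle $C_6$. Consequently, the image of the projection is the edge polytope $E(C_6)$, and $P$ is a segmental fibration over $E(C_6)$: over each of the six base vertices lies a unit segment in the direction of $e_7$, with the fiber over $(1,1,0,0,0,0)$ lifted so that it spans the heights $111$ and $112$. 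In structure, this is the higher-dimensional analogue of the polytope appearing in Theorem~\ref{thm:very-ample-not-log-concave}, which is a segmental fibration over $E(C_4)$ with one segment lifted to heights $16$ and $17$.

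Next I would establish very-ampleness by following precisely the argument given by Bruns in \cite[Example~17]{bogart-et-al} (see also \cite{beck-delgado-gubeladze-michalek}) for the $C_4$-case. The relevant features of that argument are purely local: at each vertex, one verifies that the semigroup generated by the lattice vectors pointing from that vertex to its neighbors covers the entire lattice inside the vertex cone beyond some finite threshold, which is enough for very-ampleness. The key input is that $E(C_6)$ admits a regular unimodular triangulation (since the edge polytope of any bipartite graph does), so the cones at the vertices of the base are unimodular; the large shift at the distinguished fiber affects only the cones at the two lifted vertices, where one can check --- exactly as in the $C_4$-case --- that sufficiently high lattice multiples of each generator lie in the semigroup. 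The main obstacle in this plan is making the last step fully rigorous; a clean way to sidestep it is to invoke a computer computation (e.g.\ with \texttt{Normaliz}) verifying the Hilbert basis condition, since only a finite and small number of vertex cones need to be examined.

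Combining the very-ampleness of $P$ with the non-log-concavity of its Ehrhart series (from the preceding theorem) yields the corollary. Moreover, since the $h^*$-polynomial of $P$ given in the previous theorem, $1+6x+6x^2+113x^3$, fails to be log-concave, this also provides, via Proposition~\ref{prop:hstar-log-concave-implies-series-log-concave}, a strengthening of Theorem~\ref{thm:very-ample-not-log-concave}, as advertised.
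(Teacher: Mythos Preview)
Your proposal is correct and follows essentially the same route as the paper: identify the polytope from the preceding theorem as a segmental fibration over the edge polytope of an even cycle (here $C_6$), and invoke the known very-ampleness of such polytopes. The only cosmetic difference is the citation chosen for very-ampleness: the paper appeals to the Laso\'n--Micha{\l}ek construction \cite{lason-michalek}, whereas you trace it back through Bruns' example in \cite{bogart-et-al} and \cite{beck-delgado-gubeladze-michalek}; both cover the same family of examples.
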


\begin{proof}
    Following the construction of Laso\'n and Micha{\l}ek \cite{lason-michalek} of a segmental fibration over the edge polytope of an even cycle, we can see that the polytope that we built in the preceding proof is very ample but not IDP.
\end{proof}

In summary, we have checked the lack of general implications in any directions between the $h^*$-unimodality and the log-concavity of the Ehrhart series. We propose the following conjecture as a counterpart of Conjecture~\ref{conj:idp-implies-unimodal}.

\begin{restatable}{conjecture}{idpehrserieslogconc}\label{conj:idp-ehr-series-logconc}
    If $P$ is an IDP lattice polytope, then the Ehrhart series of $P$ is log-concave.
\end{restatable}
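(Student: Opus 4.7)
The plan is to exploit the defining feature of IDP directly. Writing $S = P \cap \mathbb{Z}^n$, the IDP assumption says that for every positive integer $k$ one has $kP \cap \mathbb{Z}^n = S + S + \cdots + S$ ($k$ summands), so $E_P(k) = |kS|$, where $kS$ denotes the $k$-fold Minkowski sumset of the finite set $S$. In these terms the conjecture becomes
\[ |mS|^{2} \ \geq\ |(m-1)S|\cdot |(m+1)S| \qquad \text{for every } m \geq 1. \]
The case $m=1$ amounts to $|S|^{2}\geq |2S|$, which is immediate from the surjectivity of the addition map $S\times S \twoheadrightarrow 2S$. For $m\geq 2$, the target inequality lives inside additive combinatorics.

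The first approach I would try is a direct combinatorial injection $\iota : (m-1)S \times (m+1)S \hookrightarrow mS\times mS$. The natural candidate sends $(u,v)\mapsto (u+s,\,v-s)$, where $s\in S$ is chosen canonically (say the lex-smallest) subject to $v-s\in mS$; IDP applied to $(m+1)P$ guarantees that at least one such $s$ exists. Injectivity is where all the work lies: recovering $(u,v)$ from its image requires inverting the canonical-decomposition rule, which amounts to a global shelling of $mS$ compatible with the sumset structure. I expect this to be the main obstacle, and the success of the whole program to hinge on finding the right canonical rule --- perhaps one produced by a sufficiently generic linear functional yielding unique extreme representations.

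Should the injection route fail, two alternatives are worth pursuing. The available Pl\"unnecke--Ruzsa-style inequalities for iterated sumsets tend to control $|nS|$ in terms of $|S|$ and $|2S|$ and do not directly establish log-concavity in $n$, so one would have to hunt for a sumset inequality tailored to the ``convex'' finite sets arising as lattice points of an IDP polytope. Alternatively, $E_P$ is the Hilbert function of the semigroup algebra of the cone over $P$, a standard graded normal Cohen--Macaulay domain under IDP, so any general log-concavity theorem for Hilbert functions of such rings would immediately settle the conjecture. However, the Niesi--Robbiano \cite{niesi-robbiano} and Li--Yong \cite{li-yong} counterexamples rule out such a purely algebraic theorem, forcing any successful proof to genuinely exploit the polyhedral structure of IDP lattice polytopes, not only their algebraic avatars.

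The low-dimensional cases $d \leq 2$ are essentially free, since every lattice polygon is IDP and the inequality reduces to elementary polynomial manipulations (for $d=1$ and $P=[0,L]$ one has $E_P(m)^{2}-E_P(m-1)E_P(m+1)=L^{2}\geq 0$), so the genuine test begins at $d=3$. Before committing to any single strategy, I would collect extensive computational evidence across small dimensions and try to understand whether the margin $|mS|^{2}-|(m-1)S|\cdot |(m+1)S|$ admits a combinatorial interpretation --- for instance, as a count of certain ``obstructed'' decompositions --- since such an interpretation is precisely what an injective proof would ultimately furnish, and identifying it concretely would be the first substantive step towards settling Conjecture~\ref{conj:idp-ehr-series-logconc}.
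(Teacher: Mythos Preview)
This statement is posed in the paper as an \emph{open conjecture}; the paper does not prove it. What you have written is not a proof but a research plan, and you are candid about this throughout (``The first approach I would try\ldots'', ``Should the injection route fail\ldots'', ``Before committing to any single strategy\ldots''). So there is no proof here to compare against the paper's, because neither you nor the paper supplies one.

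That said, your outline is reasonable and in fact overlaps with the paper's own commentary on the conjecture. The paper also observes that IDP immediately yields $E_P(n)E_P(m) \geq E_P(n+m)$, hence in particular $E_P(1)^2 \geq E_P(0)E_P(2)$ --- exactly your $m=1$ case via the surjection $S\times S \twoheadrightarrow 2S$. Your proposed injection $(u,v)\mapsto(u+s,v-s)$ is natural, but as you yourself note, injectivity is where all the difficulty hides, and no canonical rule for choosing $s$ is actually produced; this is the genuine gap. The Pl\"unnecke--Ruzsa remark is accurate: those inequalities go the wrong way for this purpose. On the algebraic side, one small correction: the Niesi--Robbiano and Li--Yong examples exhibit non-log-concave $h$-vectors, not directly non-log-concave Hilbert functions; the more directly relevant obstruction is the paper's own very ample (but non-IDP) polytope whose Ehrhart series fails log-concavity, showing that the IDP hypothesis --- not merely the Cohen--Macaulay domain structure --- must be used in an essential way.

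In short: the conjecture remains open, and your proposal identifies plausible avenues without resolving any of them.
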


Our conjecture deserves a few comments. On one hand, it is straightforward to see that the IDP property implies that for every pair of positive integers $n$ and $m$, we have that $E_P(n)E_P(m)\geq E_P(n+m)$, which for $n=m=1$ yields $E_P(1)^2\geq E_P(0) E_P(2)$ (in fact, we already used this property of IDP polytopes in the proof of Proposition~\ref{prop:idp-log-conc-three-terms}). On the other hand, we point out that the combinatorial interpretation of $E_P(n)$ as the number of lattice points in $nP$ is extremely concrete when contrasted against typical interpretations for the coefficients of the $h^*$-polynomial. For example, in the case of order polytopes, the evaluations of the Ehrhart polynomial can be interpreted as the number of order preserving maps, whereas the coefficients of the corresponding $h^*$-polynomial are slightly more complicated to describe (see \cite[Chapter~3]{stanley-ec1}). 

The careful reader may already have noticed that all the examples we have constructed so far of polytopes $P$ whose Ehrhart series is \emph{not} log-concave, share the property of also \emph{not} being Ehrhart positive. The following shows that Ehrhart positivity alone is not enough to guarantee a log-concave Ehrhart series.

\begin{theorem}
    There exists an Ehrhart positive lattice polytope whose Ehrhart series is not log-concave.
\end{theorem}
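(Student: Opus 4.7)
My plan is to revisit the Reeve tetrahedra $\mathcal{R}_q$ from Example~\ref{ex:reeve-tetrahedra} and exhibit a single value of $q$ for which Ehrhart positivity is preserved while log-concavity of the Ehrhart series already fails at $n=1$. The guiding observation is that the ``Ehrhart positivity window'' and the ``Ehrhart-series non-log-concavity window'' for the family $\{\mathcal{R}_q\}_{q \ge 0}$ overlap, so a single family with one tunable parameter is enough to produce the example.

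First I will recall the explicit Ehrhart polynomial
\[
E_{\mathcal{R}_q}(x) = \tfrac{q+1}{6}x^3 + x^2 + \tfrac{11-q}{6}x + 1,
\]
from which it is immediate that every coefficient is strictly positive precisely when $1 \le q \le 10$; in particular $\mathcal{R}_q$ is Ehrhart positive throughout this range. Next, using the $h^*$-expansion $E_{\mathcal{R}_q}(m) = \binom{m+3}{3} + q\binom{m+1}{3}$ (which follows at once from $h^*_{\mathcal{R}_q}(x) = 1 + qx^2$), I will read off the first three values
\[
E_{\mathcal{R}_q}(0) = 1, \qquad E_{\mathcal{R}_q}(1) = 4, \qquad E_{\mathcal{R}_q}(2) = q + 10.
\]
The inequality $E_{\mathcal{R}_q}(1)^2 < E_{\mathcal{R}_q}(0)\,E_{\mathcal{R}_q}(2)$ then collapses to $16 < q+10$, i.e., to $q \ge 7$. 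Consequently, for any $q \in \{7,8,9,10\}$ the corresponding Reeve tetrahedron is simultaneously Ehrhart positive and fails log-concavity of its Ehrhart series already at $n = 1$; taking for instance $q = 7$ yields the desired example.

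There is no real obstacle in this argument: the only substantive point is to notice that the two windows $q \le 10$ (Ehrhart positivity) and $q \ge 7$ (failure of log-concavity at $n=1$) have non-empty intersection. The rest is a routine computation using the known formula for $E_{\mathcal{R}_q}$.
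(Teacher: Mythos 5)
Your argument is correct. Your computations check out: with $h^*_{\mathcal{R}_q}(x) = 1 + qx^2$ one gets $E_{\mathcal{R}_q}(m) = \binom{m+3}{3} + q\binom{m+1}{3}$, whence $E_{\mathcal{R}_q}(0)=1$, $E_{\mathcal{R}_q}(1)=4$, $E_{\mathcal{R}_q}(2)=q+10$, and indeed $16 < q+10$ exactly when $q \ge 7$, while the linear coefficient $\tfrac{11-q}{6}$ of $E_{\mathcal{R}_q}(x)$ remains non-negative for $q \le 11$. So any $q\in\{7,\ldots,11\}$ works, and $q=7$ gives $E_{\mathcal{R}_7}(x) = \tfrac{4}{3}x^3 + x^2 + \tfrac{2}{3}x + 1$ with the Ehrhart series starting $1, 4, 17, \ldots$, which fails log-concavity at $n=1$. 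The paper's proof is in the same spirit --- it also exhibits a $3$-dimensional polytope, Ehrhart positive, whose Ehrhart series starts $1, 5, 26, \ldots$ and fails log-concavity at $n=1$ --- but uses an ad hoc polytope with five vertices rather than a Reeve tetrahedron. Your choice is slightly more economical in exposition since it recycles a one-parameter family already introduced in the paper (Example~\ref{ex:reeve-tetrahedra}) and makes transparent how the two windows overlap; the paper's polytope is just a different explicit witness. Both are valid, and yours additionally illustrates that the phenomenon already occurs within the prototypical Reeve family.

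One small point of precision: ``Ehrhart positive'' in this paper means non-negative coefficients, so you actually have a slightly wider window $q\le 11$ than the ``strictly positive'' window $q\le 10$ you state; this does not affect the conclusion since $q=7$ lies in both.
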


\begin{proof}
    Let $P\subseteq\mathbb{R}^3$ be the polytope having as vertices the columns of the matrix:
    \[\begin{bmatrix}
    0 & 1 & 0 & 0 & 1\\
    0 & 0 & 1 & 0 & 1\\
    0 & 0 & 0 & 1 & -13
    \end{bmatrix}\]
    Its Ehrhart polynomial is $E_P(x)=\frac{7}{3}x^3+\frac{3}{2}x^2+\frac{1}{6}x+1$.
    In particular, this is Ehrhart-positive. On the other hand, the Ehrhart series looks like $1+5x+26x^2+\cdots$, which is not log-concave.
\end{proof}

Now let us mention some instances in which the Ehrhart series is log-concave.

\begin{theorem}\label{thm:some-log-conc-ehr-series}
    The following polytopes have a log-concave Ehrhart series:
    \begin{itemize}
        \item Polytopes whose $h^*$-polynomial is log-concave.
        \item Order polytopes. 
        \item Lattice polygons.
    \end{itemize}
\end{theorem}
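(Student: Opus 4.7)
The proof splits into three cases, which I address in order of increasing difficulty.

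The first case, where $h_P^*(x)$ is log-concave, is precisely \cref{prop:hstar-log-concave-implies-series-log-concave}, so there is nothing new to prove.

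For lattice polygons, an elementary direct computation via Pick's theorem works. Writing $E_P(m) = A m^2 + B m + 1$ with $A = \mathrm{area}(P)$ and $B = b/2$ (where $b$ is the number of boundary lattice points), a short expansion gives
\[E_P(m)^2 - E_P(m-1) E_P(m+1) = 2 A^2 m^2 + 2 A B m + B^2 - A^2 - 2 A,\]
a quadratic in $m$ with positive leading coefficient and vertex at $-B/(2A) < 0$, hence monotonically increasing on $[0, \infty)$. It therefore suffices to verify positivity at $m = 1$, where the value equals $(A + B)^2 - 2A$. Since $b \geq 3$ and Pick's theorem gives $A = i + b/2 - 1$ (with $i$ the number of interior lattice points), setting $s := i + b$ one obtains $(A+B)^2 - 2A = (s - 1)^2 - (i + s - 2) \geq s^2 - 4s + 6 > 0$, using $i \leq s - 3$ and the negative discriminant of $s^2 - 4s + 6$.

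The case of order polytopes is the substantive one. Here $E_{\mathcal{O}(Q)}(m) = \Omega_Q(m+1)$ counts multichains of length $m$ in the distributive lattice $L = J(Q)$ of order ideals of $Q$, so the desired inequality reduces to $|\mathcal{M}_m(L)|^2 \geq |\mathcal{M}_{m-1}(L)| \cdot |\mathcal{M}_{m+1}(L)|$ for $m \geq 1$, where $\mathcal{M}_k(L)$ denotes the set of multichains of length $k$. The plan is to produce an injection $\mathcal{M}_{m-1}(L) \times \mathcal{M}_{m+1}(L) \hookrightarrow \mathcal{M}_m(L) \times \mathcal{M}_m(L)$ from the lattice structure of $L$. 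A natural candidate sends a pair of multichains $((I_1 \leq \cdots \leq I_{m-1}), (J_1 \leq \cdots \leq J_{m+1}))$ to $(K_1, \ldots, K_m)$ and $(N_1, \ldots, N_m)$ defined by $K_i = I_i \wedge J_{i+1}$ for $1 \leq i \leq m-1$, $K_m = J_{m+1}$, $N_1 = J_1$, and $N_i = I_{i-1} \vee J_i$ for $2 \leq i \leq m$; both sequences are multichains by monotonicity of $\wedge$ and $\vee$.

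The hard part will be establishing injectivity. The naive meet--join construction above fails because in a distributive lattice the unordered pair $\{I, J\}$ is generally not recoverable from $(I \wedge J, I \vee J)$; for instance in $\{0,1\}^2$ both $\{(0,0), (1,1)\}$ and $\{(1,0), (0,1)\}$ have meet $(0,0)$ and join $(1,1)$. Two routes around this: either a case-by-case refinement of the map that distinguishes whether $I_i \leq J_{i+1}$ and encodes this as auxiliary data inside the image, or an application of the Ahlswede--Daykin four-function theorem (equivalently, the FKG inequality on $L^m$), yielding a correlation inequality equivalent to the log-concavity of multichain counts in distributive lattices. Either route requires careful bookkeeping, and this is where the main technical work lies.
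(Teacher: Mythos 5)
Your first two items match the paper's argument. The case where $h^*_P(x)$ is log-concave is, as you say, just Proposition~\ref{prop:hstar-log-concave-implies-series-log-concave}. For lattice polygons, you and the paper compute the identical quadratic expression
\[
E_P(m)^2 - E_P(m-1)E_P(m+1) = 2A^2 m^2 + 2ABm + B^2 - A^2 - 2A,
\]
and then verify positivity for $m\geq 1$. Your route through Pick's theorem is a bit more elaborate than necessary --- the paper simply observes that at $m=1$ this equals $(A+B)^2 - 2A$, which is already positive once $B\geq 1$ since $(A+B)^2 - 2A = A^2 + 2A(B-1) + B^2 > 0$ --- but the reasoning is sound and essentially equivalent.

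The genuine gap is the order polytope case, which you correctly flag as the substantive one but do not complete. The paper handles it by citation: it invokes Brenti's Theorem~7.6.5 in \cite{brenti}, where the log-concavity of the order polynomial (equivalently, of multichain counts in the distributive lattice $J(Q)$) is established, and notes the independent proof and strengthening by Chan, Pak and Panova in \cite[Theorem~1.3]{chan-pak-panova}. You instead propose to construct an explicit injection $\mathcal{M}_{m-1}(L)\times\mathcal{M}_{m+1}(L)\hookrightarrow \mathcal{M}_m(L)\times\mathcal{M}_m(L)$ via a shifted meet--join map; the map is well-defined and does land in multichain pairs, but as you yourself observe, injectivity is precisely what fails for naive meet--join constructions in distributive lattices, and neither of your two suggested remedies (a case refinement encoding extra data, or an appeal to Ahlswede--Daykin/FKG) is actually carried out. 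As written, your proposal proves nothing for order polytopes beyond what is already known. If you want a self-contained argument rather than a citation, the Ahlswede--Daykin route is indeed viable --- this is essentially how the classical result is proved --- but the ``careful bookkeeping'' you defer is the entire content of that step, so this item must be regarded as missing.
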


\begin{proof}
    The first is just a restatement of Proposition~\ref{prop:hstar-log-concave-implies-series-log-concave}. For the second, we refer to the discussion about order polytopes in Section~\ref{sec:six}. The last one follows from the fact that for \emph{every} polynomial $f(x)=ax^2+bx+1$ where $a>0$ and $b\geq 1$, one has:
    \begin{align}\label{eq:f(x)} f(x)^2 - f(x+1)f(x-1) = 2a^2\left(x^2-\tfrac{1}{2}\right)+2a(bx-1)+b^2,\end{align}
    which is strictly positive whenever $x\geq 1$. Notice that Ehrhart polynomials of lattice polygons always satisfy this assumption (in this case, recall that $b$ equals half the perimeter polygon, and therefore is certainly greater than or equal to $1$).
\end{proof}

\subsection{The log-concavity of the evaluations at negative integers}

Before finishing this article, let us focus our attention on yet another type of log-concave inequalities that an Ehrhart polynomial may satisfy, i.e., the log-concavity of the sequence $|E_P(-1)|$, $|E_P(-2)|$, $|E_P(-3)|,\ldots$. Recall that the Ehrhart-Macdonald reciprocity interprets $|E_P(-m)|$, where $m\geq 1$, as the number of lattice points lying in the \emph{relative interior} of $mP$.

By following the approach and notation of \cite[Section~9.4]{cox-little-schenck}, a general result links the Ehrhart polynomial of a lattice polytope $P$ with an Euler characteristic:
    \begin{equation} \label{eq:ehrhart-euler-char}
        E_P(\ell) = \chi(\mathscr{O}_{X_P}(\ell D_P)),
    \end{equation}
where $X_P$ is the toric variety associated to $P$, $D_P$ is the associated ample divisor, and $\mathscr{O}_{X_P}$ is the structure sheaf.

If $P\subseteq\mathbb{R}^n$ is a very ample polytope, then the associated toric variety $X_P$ admits an embedding into the projective space  $\mathbb{P}^{s-1}$ where $s = |P\cap \mathbb{Z}^n|$, by using the very ample divisor $D_P$. In particular, in this case one can define and study the so-called ``$K$-polynomial'' of $X_P$. Following \cite[Example~3.4.3]{brion} one can write the structure sheaf of $X_P$ as a (unique) linear combination of structure sheafs of linear subspaces of $\mathbb{P}^s$ of dimension at most $d=\dim P$:
    \begin{equation} \label{eq:sheafs-identity}
    \mathscr{O}_{X_P} = \sum_{i=0}^d a_i \mathscr{O}_{\mathbb{P}^i}.
    \end{equation}
The $K$-polynomial of $X_P$ is defined as: 
    \[K_{X_P}(x) := \sum_{i=0}^d a_i\,x^i.\]
Since the Euler characteristic acts linearly, we can relate equations \eqref{eq:ehrhart-euler-char} and \eqref{eq:sheafs-identity} so to write:
    \begin{equation}\label{eq:ehrhart-weird-basis}
        E_P(\ell) = \chi(\mathscr{O}_{X_P}(\ell D_P)) = \sum_{i=0}^d a_i\binom{\ell+i}{i},
    \end{equation}
where the binomial coefficients are explained by \cite[Example~9.4.1]{cox-little-schenck}. We will later show that $(-1)^{d-i} a_i\geq 0$ for all $i=0,\ldots,d$ (see the footnote~\ref{footnote-positivity} below)

In particular, for a very ample polytope $P$, the $K$-polynomial of $X_P$ encodes the same information as the Ehrhart polynomial of $P$. Some connections between $K$-polynomials with the Lorentzian property (see, e.g., \cite{castillo-cid}) have motivated the question of whether the (absolute values of the) coefficients of $K_{X_P}(x)$ form a log-concave sequence when $P$ is very ample. An elementary check reveals that:
    \[ E_P(-1) = a_0, \qquad E_P(-2) = a_0 - a_1, \qquad E_P(-3) = a_0 - 2a_1 + a_2.\]
And moreover,
    \[ E_P(-2)^2 - E_P(-1)E_P(-3) = (a_0 - a_1)^2 - a_0(a_0-2a_1+a_2) = a_1^2-a_0a_2.\]

In particular, the log-concavity of (the absolute values of) the first three coefficients of the $K$-polynomial of $X_P$ is equivalent to the log-concavity of (the absolute values of) the first three negative evaluations of the Ehrhart polynomial of $P$. For an arbitrary polytope $P$ (not necessarily very ample) we define the \emph{$K$-polynomial of $P$}, denoted $K_P(x)$, by
    \[ K_P(x) := \sum_{i=0}^d a_i\, x^i, \qquad \text{ whenever } \qquad E_P(x) = \sum_{i=0}^d a_i \binom{x+i}{i}. \]
This definition guarantees that if $P$ is very ample, we have $K_{X_P}(x) = K_P(x)$.

We will refer to the sequence $|E_P(-1)|$, $|E_P(-2)|$, $|E_P(-3)|$, $|E_P(-4)|$, $\ldots$ as the \emph{interior Ehrhart series} of $P$.

\begin{proposition}
    The log-concavity of the absolute values of the coefficients of $K_{P}(x)$ implies the log-concavity of the interior Ehrhart series of $P$.
\end{proposition}

\begin{proof}
    We have that
        \[ E_P(-x) = \sum_{i=0}^d a_i \binom{-x+i}{i} = \sum_{i=0}^d (-1)^i a_i \binom{x-1}{i}.\]
    In particular, the polynomial $\widetilde{E}_P(x) := (-1)^{d}E_P(-x-1)$ satisfies:
        \[ \widetilde{E}_P(x) = \sum_{i=0}^d \widetilde{a}_i \binom{x}{i}\]
    and we know that $\widetilde{a}_i := (-1)^{d-i} a_i \geq 0$. The log-concavity\footnote{Recall that our convention is that log-concavity also affirms that the numbers in the sequence must be strictly positive.} of $(\widetilde{a}_0,\ldots,\widetilde{a}_d)$ implies by an old result attributed to Karlin (see \cite[Theorem~2.5.7]{brenti}) that the infinite sequence $\widetilde{E}_P(0), \widetilde{E}_P(1), \widetilde{E}_P(2), \ldots$ is log-concave. In particular, this tells that the infinite sequence of evaluations at negative integers, $|E_P(-1)|,| E_P(-2)|, |E_P(-3)|$, \ldots\, is log-concave.
\end{proof}

The following result shows that for polytopes that are not very ample, the log-concavity of the interior Ehrhart series of $P$ may fail.

\begin{theorem}\label{thm:negative-eval}
    There exists a lattice polytope $P\subseteq \mathbb{R}^4$ of dimension $4$ such that $|E_P(-2)|^2 < |E_P(-1)|\cdot|E_P(-3)|$. 
\end{theorem}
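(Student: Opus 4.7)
The plan is to translate the desired inequality on $|E_P(-k)|$ into a condition on the $h^{*}$-vector of $P$, and then exhibit an explicit $4$-dimensional lattice simplex satisfying it.

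Using the change of basis $E_P(x) = \sum_{i=0}^{4} h_i^{*}\binom{x+4-i}{4}$ (equivalently, Ehrhart--Macdonald reciprocity applied to interior lattice point counts of $mP$), one computes
\[
|E_P(-1)| = h_4^{*},\qquad |E_P(-2)| = h_3^{*} + 5h_4^{*},\qquad |E_P(-3)| = h_2^{*} + 5h_3^{*} + 15h_4^{*}.
\]
After substitution, the inequality $|E_P(-2)|^2 < |E_P(-1)|\cdot|E_P(-3)|$ simplifies to
\[
h_2^{*}\, h_4^{*} \;>\; (h_3^{*})^2 + 5\,h_3^{*} h_4^{*} + 10\,(h_4^{*})^2.
\]
A clean sufficient condition is $h_4^{*} = 1$, $h_3^{*} = 0$, and $h_2^{*} \geq 11$. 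Note that by Hibi's lower bound theorem, applied with $s = d = 4$, this also forces $h_1^{*} \leq h_3^{*} = 0$, so any such $P$ must be an empty $4$-simplex of normalized volume $\geq 13$ with exactly one interior lattice point.

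To exhibit such an example, I would consider a Reeve-type simplex
\[
P = \operatorname{conv}(0,\, e_1,\, e_2,\, e_3,\, w),\qquad w=(w_1,w_2,w_3,13)\in\mathbb{Z}^4,
\]
whose normalized volume equals $w_4 = 13$ and whose $h^{*}$-polynomial is computed by the classical fundamental parallelepiped formula: for each $j\in\{0,1,\ldots,12\}$ there is exactly one lattice point in the half-open parallelepiped over the homogenized vertices, and its degree equals $\lceil T_j/13\rceil$, where
\[
T_j \;=\; j + \bigl[(-j w_1)\bmod 13\bigr] + \bigl[(-j w_2)\bmod 13\bigr] + \bigl[(-j w_3)\bmod 13\bigr].
\]
The problem then becomes combinatorial: find $(w_1,w_2,w_3)$ such that among $T_1,\ldots,T_{12}$, eleven lie in $(13,26]$, one lies in $(39,48]$, and none lies in $(0,13]\cup(26,39]$.

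The main obstacle is exactly this last step: the required residue pattern is fairly rigid, and it is \emph{a priori} unclear whether it is realizable with normalized volume $13$. If no such tuple exists, one can either enlarge the volume (taking $w_4 > 13$) or instead aim for a more flexible target such as $(1,1,17,1,1)$, for which the inequality still holds (since $17 \cdot 1 > 1 + 5 + 10 = 16$) and the existence of a corresponding lattice simplex is easier to verify by direct construction. Either way, the final step reduces to an explicit finite search over small candidate simplices, making the construction effectively routine once the $h^{*}$-condition has been identified.
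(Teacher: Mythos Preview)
Your reduction to the $h^*$-condition
\[
h_2^{*} h_4^{*} \;>\; (h_3^{*})^2 + 5\,h_3^{*} h_4^{*} + 10\,(h_4^{*})^2
\]
is correct, and the target shapes $(1,0,h_2,0,1)$ with $h_2\geq 11$ or $(1,1,h_2,1,1)$ with $h_2\geq 17$ are exactly the right thing to aim for. But the proposal stops short of a proof: for an existence statement you must actually produce a polytope, and ``reduces to an explicit finite search \ldots\ effectively routine'' is not a substitute for carrying out that search. As you yourself note, it is not clear that the residue pattern needed for the first target is realizable at volume $13$; and for the second target you give no candidate at all. Until a concrete $P$ is on the page and its Ehrhart polynomial verified, there is no theorem.

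The paper's proof is pure example: it writes down the reflexive $4$-simplex with vertex set $\{e_1,e_2,e_3,(1,2,3,5),(-2,-3,-4,-5)\}$, computes
\[
E_P(x)=\tfrac{25}{24}x^4+\tfrac{25}{12}x^3+\tfrac{35}{24}x^2+\tfrac{5}{12}x+1,
\]
and reads off $E_P(-1)=1$, $E_P(-2)=6$, $E_P(-3)=41$, so $36<41$. This polytope has $h^{*}$-vector $(1,1,21,1,1)$, which lands squarely in your second target family. So your analysis was pointed in the right direction and would have led to exactly this kind of example; what is missing is only the final concrete construction, which for this theorem is the entire content.
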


\begin{proof}
    Let $P \subseteq \mathbb{R}^4$ be the polytope having the vertices as vertices the columns of the matrix: 
    \[\begin{bmatrix}
        1    &0    &0    &1   &-2 \\
        0    &1    &0    &2   &-3 \\
        0    &0    &1    &3   &-4 \\
        0    &0    &0    &5   &-5
    \end{bmatrix}.\]
    Then this is a (non spanning) reflexive polytope and has the Ehrhart polynomial \[ E_P(x)=\tfrac{25}{24}x^4+\tfrac{25}{12}x^3+\tfrac{35}{24}x^2+\tfrac{5}{12}x+1.\]
    In fact, the evaluations of negative integers are $E_P(-1)=1, E_P(-2)=6, E_P(-3)=41, E_P(-4)=156$, etc. 
\end{proof}

Petter Br\"and\'en asked at the ``Ehrhart day'' meeting held at KTH (Stockholm) in June 2023 for classes of polytopes satisfying this log-concavity property. We have already set enough motivation to pose the following question, proposing the class of very ample polytopes as a reasonable departing point.

\begin{restatable}{question}{lastquestion}\label{question:very-ample-negative-series}
\leavevmode
    \begin{enumerate}[(a)]
        \item Does there exist a very ample polytope $P$ for which $|E_P(-2)|^2 < |E_P(-1)|\cdot|E_P(-3)|$?
        \item Does there exist a very ample polytope whose interior Ehrhart series is not log-concave?
    \end{enumerate}
\end{restatable}

A negative answer to any of these two questions would disprove the log-concavity question for the coefficients of $K$-polynomials of $X_P$ for very ample polytopes.

We now show a subtle result, which connects (in our opinion unexpectedly) the log-concavity of the $h^*$-polynomial with the log-concavity of the interior Ehrhart series. 

\begin{proposition}
    If the $h^*$-polynomial of $P$ is log-concave, then so is the interior Ehrhart series of $P$.
\end{proposition}

\begin{proof}
    Let us write $E_P(x) = \sum_{j=0}^d a_j \binom{x+j}{j}$. Computing the Ehrhart series of $P$ yields:
    \begin{align*}
        \sum_{m\geq 0} E_P(m) z^m &= \sum_{m\geq 0} \sum_{j=0}^d a_j \binom{m+j}{j}  z^m
        = \sum_{j=0}^d a_j \sum_{m\geq 0}\binom{m+j}{j} z^m
        = \sum_{j=0}^d a_j \frac{1}{(1-z)^{j+1}}\\
        &= \frac{1}{(1-z)^{d+1}} \sum_{j=0}^d a_j (1-z)^{d-j}.
    \end{align*}
    In particular, this tells us that $h^*_P(x) = \sum_{j=0}^d a_j (1-x)^{d-j}$ or, more succinctly,
    \[ h^*_P(x+1) = \sum_{j=0}^d (-1)^{d-j}a_j \, x^{d-j}.\]
    In other words, the coefficients $(a_0,\ldots,a_d)$ are (up to a sign) the coefficients of $h^*_P(x+1)$ in reversed order\footnote{This shows that $(-1)^{d-j} a_j \geq 0$ as we claimed earlier.\label{footnote-positivity}}. In particular, the assumption on the log-concavity of the coefficients of $h^*_P(x)$ yields by a classical result (see, e.g., \cite[Corollary~8.4]{brenti-update}) the log-concavity of the coefficients of $h^*_P(x+1)$. Hence, the absolute values of $a_0,\ldots,a_d$ form a log-concave sequence, and by the reasoning above the statement of Theorem~\ref{thm:negative-eval} this gives the log-concavity of the interior Ehrhart series of $P$.
\end{proof}

We finish this article with a positive result towards Br\"and\'en's questions. We omit the proof, because it is very similar to the proof of Theorem~\ref{thm:some-log-conc-ehr-series}.

\begin{theorem}
    If $P$ is a lattice polygon, then its interior Ehrhart series is log-concave.
\end{theorem}

\section{Miscellany}\label{sec:six}

Throughout this article we have mentioned several families of polytopes appearing in combinatorics, and we have posed several questions, problems, and new conjectures. We give an overview of these families of polytopes, and we collect all the problems and conjectures altogether.

\subsection{A miscellany of combinatorial polytopes}

\begin{footnotesize}

\subsubsection*{Generalized permutohedra} A generalized permutohedron \cite{postnikov} is a polytope all of whose edges are parallel to some vector of the form $e_i-e_j$. The class of (integral) generalized permutohedra coincides with base polytopes of discrete polymatroids. They were conjectured by Castillo and Liu in \cite{castillo-liu} to be Ehrhart positive, but this was disproved in \cite{ferroni3}. They are known to be IDP \cite{mcdiarmid} and have a regular unimodular triangulation \cite{backman-liu}, but are conjectured to be even higher in the hierarchy (see Conjecture~\ref{conj:gen-perm-quad-triang}). 

\subsubsection*{$\mathcal{Y}$-generalized permutohedra} A $\mathcal{Y}$-generalized permutohedron is a lattice polytope which arises as a Minkowski sum of dilations of standard simplices. Not all generalized permutohedra are of this form. This family is essentially disjoint from the class of matroids. Postnikov showed in \cite{postnikov} that all these polytopes are Ehrhart positive. Higashitani and Ohsugi proved in \cite{higashitani-ohsugi} that they possess regular unimodular triangulations. Furthermore, Backman and Santos (personal communication) proved that these polytopes have quadratic triangulations. Up to possibly a unimodular equivalence, many well-studied families of polytopes fall into this class, for example nestohedra, graph associahedra \cite{postnikov}, fertilitopes \cite{defant}, partial permutohedra $\mathscr{P}(m,n)$ for $n\geq m-1$ \cite{partial-permutohedra}, and Pitman-Stanley polytopes \cite{pitman-stanley}.

\subsubsection*{Matroid polytopes} Given a matroid $\M$ having ground set $E$ and set of bases $\mathscr{B}$, one can construct the so-called base polytope of $\M$ as the polytope in $\mathbb{R}^E$ having as vertices the indicator vectors of the bases $B\in\mathscr{B}$, i.e., as the convex hull of the vectors $e_B:=\sum_{i\in B} e_i$ for each $B\in \mathscr{B}$. Matroid polytopes are exactly the class of $0/1$-generalized permutohedra. They were conjectured by De~Loera, Haws and K\"oppe to be Ehrhart positive, but this was disproved by Ferroni in \cite{ferroni3}. They are known to be IDP \cite{mcdiarmid,michalek-sturmfels} and have regular unimodular triangulations \cite{backman-liu}. It is conjectured that they are $h^*$-real-rooted \cite{ferroni2} --- the case of $h^*$-unimodality, which would follow from Conjecture~\ref{conj:idp-implies-unimodal}, is open as well, and is conjectured in \cite{deloera-haws-koppe} too.

\subsubsection*{Alcoved polytopes} An alcoved polytope (of type A) \cite{lam-postnikov} is a polytope whose supporting inequalities are of the form $\alpha_{ij}\leq x_j-x_i \leq \beta_{ij}$. It is customary to apply the change of coordinates $x_j := y_1+y_2+\cdots+y_j$ for each $j=1,\ldots,n$, so that a polytope whose supporting inequalities  are of the form $\alpha_{ij} \leq y_{i+1}+y_{i+2}+\cdots+y_j \leq \beta_{ij}$ is also called alcoved. Alcoved polytopes possess quadratic triangulations due to a result of Payne \cite{payne-koszul} (see also \cite{haase-paffenholz-piechnik-santos}). The class of alcoved polytopes encompasses a number of well-studied families of polytopes such as Lipschitz polytopes \cite{sanyal-stump}, positroids \cite{ardila-rincon-williams}, polypositroids \cite{lam-postnikov-polypositroids} and order polytopes \cite{stanley-poset}. The question on $h^*$-unimodality for alcoved polytopes was explicitly considered in \cite{sinn-sjoberg}, where a partial result was achieved.

\subsubsection*{Polypositroids} A polypositroid \cite{lam-postnikov-polypositroids} is a polytope that is at the same time alcoved and a generalized permutohedron. 

\subsubsection*{Positroids} A positroid is a polytope that is at the same time alcoved and a matroid polytope. It coincides with the class of $0/1$-polypositroids. The underlying matroid of a positroid possesses several distinguished combinatorial features. Many classes of matroids are known to be positroids (up to isomorphisms), for example all rank $2$ matroids, all uniform matroids (hence, hypersimplices), all lattice path matroids, and all series-parallel matroids. Positroids are conjectured to be Ehrhart positive \cite{ferroni-jochemko-schroter}.

\subsubsection*{Shard polytopes} Shard polytopes were introduced in \cite{padrol-pilaud-ritter}. They are matroid polytopes whose corresponding matroid is series-parallel. In particular, it follows that they are isomorphic to a positroid, and hence possess quadratic triangulations. As special cases of positroids and matroids, they are conjectured to be Ehrhart positive and $h^*$-real-rooted, respectively.

\subsubsection*{Hypersimplices} Hypersimplices are the base polytopes of uniform matroids. They possess quadratic triangulations. They are alcoved, and thus positroids. They are Ehrhart positive \cite{ferroni1} (see also \cite{ferroni-mcginnis,mcginnis}), but it is an open problem to prove that they are $h^*$-real-rooted (or even $h^*$-unimodal).

\subsubsection*{Order polytopes} The order polytope of a finite poset $P$ on $[n]$ is defined by the intersection of the unit cube $[0,1]^n$ with the inequalities $x_i\leq x_j$ for every pair of elements $i\prec j$ in $P$. These polytopes were studied by Stanley in \cite{stanley-poset}, and their Ehrhart theory is quite related with the order polynomial of the poset and the theory of $P$-partitions (see \cite[Chapter~3]{stanley-ec1}). A special case of a conjecture due to Neggers and Stanley asserted that the $h^*$-polynomial of order polytopes were real-rooted. This was disproved by Stembridge in \cite{stembridge}, motivated by the work of Br\"and\'en \cite{branden-neggers}. Order polytopes are $0/1$-polytopes, are alcoved and possess quadratic triangulations. They fail to be Ehrhart positive in general \cite{liu-tsuchiya}. In the Gorenstein case, they are known to be $\gamma$-positive \cite{branden-gamma}. It is an open problem to decide whether Gorenstein order polytopes are $h^*$-real-rooted (or even log-concave). For general order polytopes, it is open to prove that they are $h^*$-log-concave (or even unimodal, as a special case of Conjecture~\ref{conj:idp-implies-unimodal}). It was proved by Brenti in \cite[Theorem~7.6.5]{brenti} that the Ehrhart series of order polytopes are log-concave (an alternative proof and a strengthening were independently found by Chan, Pak and Panova in \cite[Theorem~1.3]{chan-pak-panova}).

\subsubsection*{Edge polytopes}
Given a finite graph $G$ on $[d]$ with the edge set $E(G)$, the edge polytope is defined to be the convex hull of $\{e_i+e_j : \{i,j\} \in E(G)\}$. 
This was explicitly introduced in \cite{ohsugi-hibi-1998}, but the algebraic counterpart of the edge polytope, known as \textit{edge ring} of $G$, already appeared in \cite{simis-vasconcelos-villarreal-1994}. 
Notice that the edge polytope of the complete graph is nothing but the second hypersimplex, and that edge polytopes of complete multipartite graph coincide with matroid polytopes of loopless matroids of rank $2$ \cite{ferroni-jochemko-schroter}.
Many combinatorial/algebraic properties on edge polytopes/edge rings have been studied. See \cite[Section~5]{herzog-hibi-ohsugi}. We point out that in some cases edge polytopes have quadratic triangulations, such as simple edge polytopes \cite{matsui-hibi-higashitani-nagazawa-hibi}, bipartite graphs that do not contain chordless cycles of length greater than five \cite{ohsugi-hibi-bipartite}, and complete multipartite graphs \cite{ohsugi-hibi-2000} (this also follows from the fact that rank 2 matroids are positroids). 

\subsubsection*{Symmetric edge polytopes}
Given a finite simple graph $G$ on $[d]$ with the edge set $E(G)$, the symmetric edge polytope is defined to be the convex hull of $\{\pm(e_i-e_j) : \{i,j\} \in E(G)\}$. 
This was introduced in \cite{matsui-hibi-higashitani-nagazawa-hibi} in the context of the investigation of CL-polytopes. 
It is known that symmetric edge polytopes are reflexive polytopes (\cite[Proposition 4.2]{matsui-hibi-higashitani-nagazawa-hibi}) and they possess regular unimodular triangulations \cite{higashitani-jochemko-michalek}, in particular they are $h^*$-unimodal. It is known that in general they are neither $h^*$-real-rooted nor possess quadratic triangulations (as an example take a cycle of length $7$). However, a conjecture of Ohsugi and Tsuchiya postulates that they are $\gamma$-positive \cite[Conjecture~5.11]{ohsugi-tsuchiya-2021}); evidence for some classes of graphs are confirmed to be $\gamma$-positive (see \cite{higashitani-jochemko-michalek} and \cite{ohsugi-tsuchiya-2021}), further evidence is provided for the positivity of some specific coefficients of the $\gamma$-polynomial (see \cite{dali-juhnke-venturello}).
The CL-ness of symmetric edge polytopes of complete bipartite graphs for $K_{a,b}$ with $\min\{a,b\}\leq 3$ is proved in \cite[Section~4]{higashitani-kummer-michalek}. A generalization of these polytopes to all \emph{regular matroids} is proposed in \cite{dali-juhnke-koch}.

\subsubsection*{Root polytopes}
Root polytopes are defined to be the convex hull of a given root system, e.g., type A, B, C, D. 
We usually regard root polytopes as lattice polytopes with respect to the underlying root lattice. 
It is known that root polytopes of type A, C and D are reflexive polytopes, while those of type B are not. 
In \cite[Theorem 2]{ardila-beck-hosten-pfeifle-seashore}, the $h^*$-polynomials of root polytopes of type A,C and D are given, respectively. 
We also notice that the root polytope of type A coincides with the symmetric edge polytope of complete graphs. 
Regarding the CL-ness of root polytopes, those of type A are proved to be CL in \cite[Example 3.5]{higashitani-kummer-michalek} as well as for the CL-ness of their dual in \cite[Corollary 5.4]{higashitani-kummer-michalek}. 
Root polytopes of type C are also proved to be CL in \cite[Example 3.6]{higashitani-kummer-michalek}, while the CL-ness of their dual is proved in \cite[Theorem 1.2]{higashitani-yamada}. 
We also mention that the CL-ness of the dual of the Stasheff polytope is also provided in \cite[Example 3.4]{higashitani-kummer-michalek}. 

\subsubsection*{Pitman-Stanley polytopes} These polytopes were introduced in \cite{pitman-stanley}. They are alcoved and can be seen as part of the family of $\mathcal{Y}$-generalized permutohedra via a unimodular equivalence, which arises via lifting all the vertices to the same height (see \cite[Example~3.1.6]{liu-survey}). In particular, it follows that they are (up to a unimodular equivalence) a special type of polypositroid. Since they are alcoved, these polytopes have quadratic triangulations. Since they are $\mathcal{Y}$-generalized permutohedra, they are Ehrhart positive. In \cite{ferroni-morales} it is proved that their $h^*$-polynomials is real-rooted. For a particular choice of parameters, one may obtain the weakly increasing parking function polytope of \cite{vindas}, denoted $\mathcal{X}_n^w(a,b)$ (its Ehrhart polynomial is real-rooted by \cite[Corollary~4.4]{vindas}).

\subsubsection*{Zonotopes} These polytopes arise as a Minkowski sum of line segments each of which has the origin as an endpoint. We refer to \cite[Chapter~9]{beck-robins} for the proofs of two classical facts about zonotopes: they are Ehrhart positive and are IDP (because they can be paved using parallelotopes). The $h^*$-real-rootedness of zonotopes was established by Beck, Jochemko and McCullough \cite{beck-jochemko-mccullough}, and their proof is essentially equivalent to proving that Ehrhart polynomials of zonotopes expand positively in the magic basis (see Theorem~\ref{thm:magic-basis}). Perhaps a little embarrasingly, it is not known whether zonotopes admit unimodular triangulations in general (cf. \cite[Section~1.5.4]{haase-paffenholz-piechnik-santos}).

\subsubsection*{Compressed polytopes} A polytope is said to be \emph{compressed} (or $2$-level) if all of its weak pulling triangulations are unimodular. Examples of compressed polytopes include Birkhoff polytopes \cite{athanasiadis-birkhoff}, some Gelfand-Testlin polytopes \cite[Corollary~27]{alexandersson}, order polytopes \cite{stanley-poset}, hypersimplices \cite{sturmfels} and edge polytopes of complete multipartite graphs \cite{ohsugi-hibi-2000}

\subsection{A summary of problems and conjectures}

For the readers interested only in the open problems and conjectures mentioned throughout the article, we collect all of them here.

\idpunimodal*
\products*
\veryample*
\genperquadratic*
\galehrhart*
\negativepatterns*
\positroidsehrhartpositive*
\edgesymedgeehrpos*
\magicapplication*
\idpehrserieslogconc*
\lastquestion*

\end{footnotesize}

\section*{Acknowledgments}

We thank the American Institute of Mathematics and the organizers of the workshop
``Ehrhart polynomials: inequalities and extremal constructions'' held in May 2022, as this project
was initiated during that workshop. We are especially grateful to Petter Br\"and\'en for numerous stimulating conversations and suggestions regarding inequalities for polynomials, and to Matt Larson for sharing many insightful thoughts about the connection between $K$-polynomials and Ehrhart theory. We thank Akiyoshi Tsuchiya for bringing to our attention the example provided in the proof of Theorem~\ref{thm:non-realrooted} and thank Gabriele Balletti for suggesting us an idea to construct the example of Theorem~\ref{thm:negative-eval}. We benefited from useful discussions and comments by Per Alexandersson, Christos Athanasiadis, Matthias Beck, Roger Behrend, Federico Castillo, Alessio D'Al\`i, and Andrew Sack. We also acknowledge a careful reading and useful suggestions by two anonymous referees.

\bibliographystyle{amsalpha0}
\bibliography{bibliography}

\end{document}